\title[Multi-Nets]{Multi-Nets. Classification of discrete and smooth surfaces with characteristic properties on arbitrary parameter rectangles}
\author[A.~I.~Bobenko, H.~Pottmann, and T.~R\"orig]{Alexander I. Bobenko \and Helmut Pottmann \and Thilo R\"orig}
\thanks{This research was supported by the DFG Collaborative Research Center TRR
109 “Discretization in Geometry and Dynamics” --
\url{www.discretization.de}.}
\address{Alexander I. Bobenko, Thilo R\"orig\vspace{-.5em}}
\address{Institute of Mathematics, Secr. MA 8-4, TU Berlin, 10623
Berlin, Germany\vspace{-.5em}}
\address{Email: {\normalfont \{bobenko,roerig\}@math.tu-berlin.de}}
\address{Helmut Pottmann\vspace{-.5em}}
\address{Geometric Modeling and Industrial Geometry, TU Wien, 
  1040 Vienna, Austria\vspace{-.5em}}
\address{Email: {\normalfont pottmann@geometrie.tuwien.ac.at}}
\keywords{
  discrete differential geometry,
  projective geometry,
  discrete conjugate nets (Q-nets),
  circular nets,
  conical nets, 
  interpolatory subdivision}
\subjclass[2010]{51A05, 53A20, 65D17 (Primary), 51B10, 51B15 (Secondary)}
\begin{document}

\begin{abstract}
  We investigate the common underlying discrete structures for various smooth and discrete nets.
  The main idea is to impose the characteristic properties of the nets not only on elementary quadrilaterals but also on larger parameter rectangles.
  For discrete planar quadrilateral nets, circular nets, $Q^*$-nets and conical nets we obtain a characterization of the corresponding discrete \emph{multi}-nets.
  In the limit these discrete nets lead to some classical classes of smooth surfaces.
  Furthermore, we propose to use the characterized discrete nets as discrete extensions for the nets to obtain structure preserving subdivision schemes.
\end{abstract}

\maketitle



\section{Introduction}
\label{sec:intro}

The aim of discrete differential geometry is to find discrete analogs of notions and methods from classical smooth differential geometry.
Different discretizations of smooth parametrized surfaces are one particular instance of discrete differential geometry that have been studied from a purely mathematical as well as an applied perspective.
The different structure of smooth parameter lines is reflected in particular properties of the corresponding discrete quadrilateral nets.

In this article we derive a common framework that allows us to characterize surface patches by a purely discrete approach.
Our guiding idea is:
\begin{center}
  \emph{Characterize the smooth surface patches by the structure of the underlying discrete net.}
\end{center}
The idea of the characterizations is to study smooth and discrete nets that satisfy a characteristic property not only for elementary quadrilaterals, but for arbitrary parameter rectangles.
This leads to discrete nets which easily yield well known smooth surfaces classes in the limit:
discrete planar quadrilateral nets lead to projective translation surfaces (cf.~\cite{Degen:1994:PlaneSilhouettes1}),
circular nets lead to isothermic channel surfaces (cf.~\Cref{thm:multi-circular-smooth}), and
conical nets lead to surfaces with planar principal curvature lines (cf.~\Cref{thm:multi-conical-smooth}).

The unique benefits of the chosen perspective are:
\begin{itemize}
\item
  The point of view of discrete differential geometry provides clear view on the structure and leads to surface classes of classical smooth differential geometry.
\item
  The proofs of the Theorems are based on classical geometry and elementary observations on planar quadrilateral nets.
\item
  The chosen approach has applications to structure preserving/geometry respecting subdivision schemes and gives a unified perspective on smooth extensions of discrete nets, see \Cref{fig:teaser}.
\end{itemize}

In previous work we have studied different kinds of extensions of discrete nets by suitable surface patches.
This has led to the extensions of circular nets by Dupin cyclide patches~\cite{BobenkoHuhnenVenedey:Cyclides},
of discrete asymptotic nets by hyperboloid patches~\cite{HVRoer:HyperbolicNets}, and
of discrete conjugate nets by supercyclide patches~\cite{BobHVRoer:Supercyclides}.
All the surface patches used for the extensions, i.e., Dupin cyclide, hyperboloid, and supercyclide surface patches, are the ``simplest'' non-trivial surfaces contained in the corresponding smooth surface classes that are characterized by the multi-nets (cf.~\Cref{thm:Degen,thm:multi-circular-smooth,thm:multi-conical-smooth,thm:multi-congruence-lie,thm:multi-congruence-pluecker}).

From the Klein perspective on geometry, all these surfaces ``live'' in the corresponding subgeometries of projective geometry.
For most of the article we will use elementary representations of the nets in Euclidean 3-space.
In Section~\ref{sec:trans_in_quadric} we provide the characterization of nets in quadrics that can also be used with the appropriate models of M\"obius and Laguerre geometries to classify circular and conical nets, respectively.

\begin{figure}[t]
  \includegraphics[width=.45\textwidth]{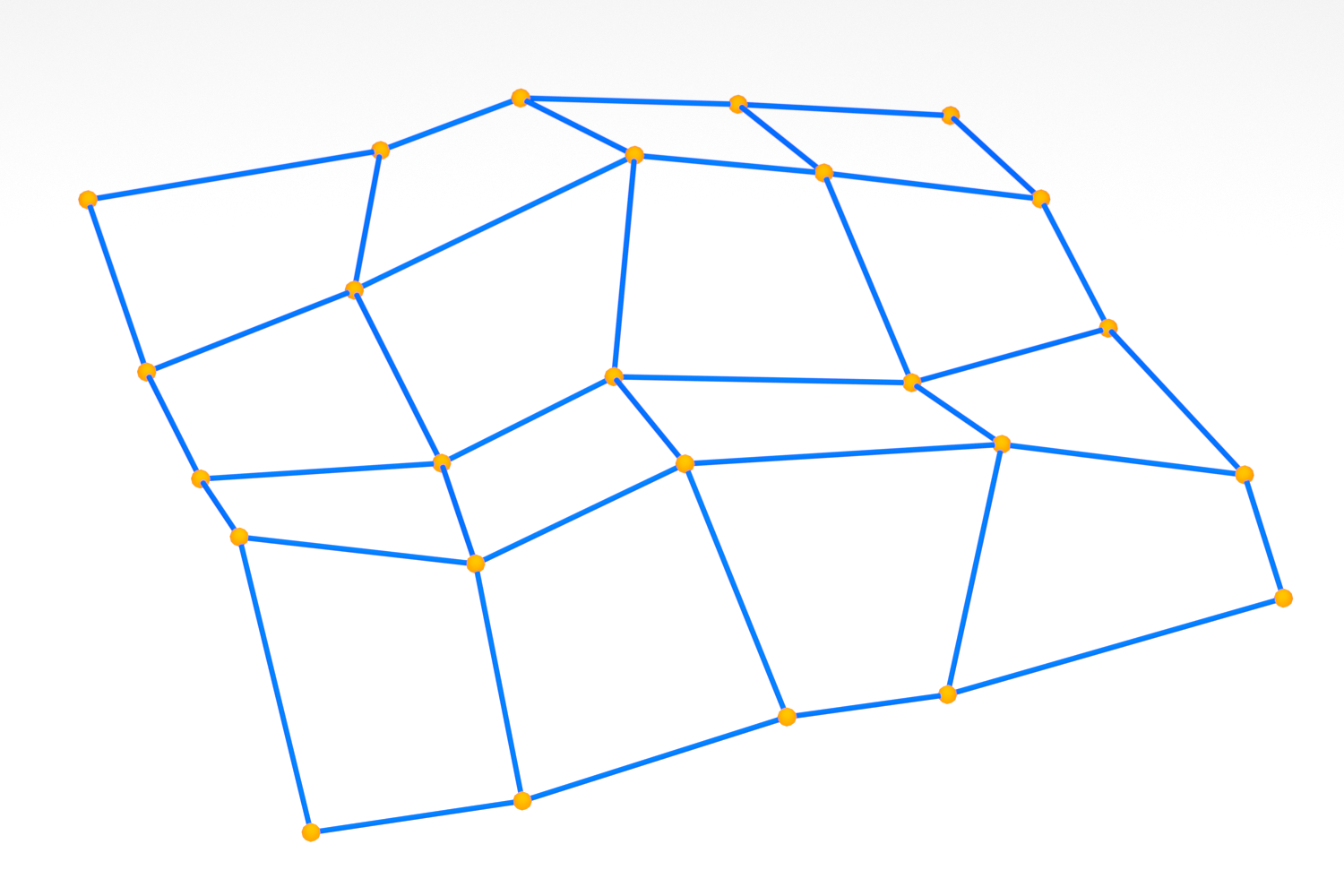}
  \includegraphics[width=.45\textwidth]{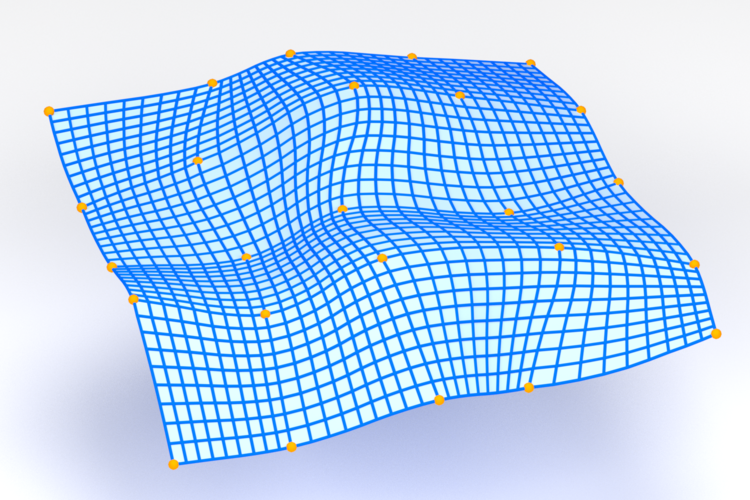}
  \caption{The discrete extension of a 4x4 planar quadrilateral net by a multi-planar net: The original net (left) and the subdivision by planar quadrilaterals (right).}
  \label{fig:teaser}
\end{figure}

\subsection*{Related work}

Discrete nets with planar quadrilateral faces have already been introduced by Sauer~\cite{Sauer:1937:ProjLinienGeometrie} as discrete analogs of surfaces parametrized along conjugate directions.
Modern investigations of these nets concentrate either on the integrable structure of these 
nets~\cite{BobenkoSuris:DDG, DoliwaSantini:1997:QnetsAreIntegrable} or on architectural 
applications~\cite{Mesnil:2016:Marionette, net:10.2312:CGF.v29i5pp1671-1679} and the design of 
nets with prescribed structure~\cite{Bouaziz:2012:ShapeUp, tang-2014-ff}.

For surfaces parametrized along principal curvature lines there exist two established discretizations in discrete differential geometry, namely circular and conical nets.
These nets have interesting properties from the purely mathematical point of view~\cite{BobenkoPottmannWallner:2010:CurvatureTheory, BobenkoSuris:DDG} but have also found various applications in the context of architectural geometry~\cite{BO20162, Bo:2011:CAS, Douthe:Isoradial, liu-2006-cm}.

In the smooth setting Degen~\cite{Degen:1994:PlaneSilhouettes1} studied parametrized surfaces with the property, that every parameter rectangle is planar. He showed that this property characterizes projective translation surfaces. If additionally, the dual condition holds, i.e., the tangent planes at the surface points of a parameter rectangle intersect in a point, then the surfaces are projective translation surfaces created from planar curves. In this and a subsequent paper~\cite{Degen:1997:PlaneSilhouettes2} he points out applications of these surfaces in Computer Aided Geometric Design.

Our work on discrete extensions of discrete nets is a contribution towards interpolatory subdivision. 
An interpolatory subdivision scheme for triangle meshes
which achieves $C^1$ smoothness in the limit for topologically regular meshes is the butterfly scheme by
Dyn, Gregory and Levin~\cite{Dynetal-1990-butterfly}. Zorin et al.~\cite{zorinetal-1996-intsubd} derived
an improved scheme which retains the simplicity of the butterfly scheme and results in smoother surfaces.
A simple interpolatory scheme for quadrilateral nets with arbitrary topology that generates 
$C^1$ surfaces in the limit, has been presented by Kobbelt~\cite{Kobbelt-1996-intsubd}. All these are
linear and local schemes which compute the positions of the new points using affine combinations of nearby
points from the unrefined net. There has also been research on nonlinear interpolatory subdivision.  There,
the main question has been smoothness analysis, which in most cases has been accomplished by
proximity to linear schemes \cite{grohs-2008-subd}. We are not aware of research on structure preserving
subdivision except for algorithms which combine subdivision with numerical optimization \cite{liu-2006-cm}. 
Very recently we became aware of research by Vaxman et al. \cite{vaxmanetal-2018-moebiussubd} on
interpolatory subdivision that commutes with M{\"o}bius transformations.

\subsection*{Notation and preliminaries}
\label{ssec:notations}

We use different fonts to distinguish points $p \in \RP^n$ or surfaces $x: [0,1]^2 \to \RP^n$ from the corresponding homogeneous coordinates $\hc{p} \in \R^{n+1}$ and $\hc{x} : [0,1]^2 \to \R^{n+1}$, respectively.
The values of discrete nets $x: \Z^2 \to \RP^n$ are denoted by lower indices, i.e., $x\ind{i,j} := x(i,j)$.
Upper indices are used for directions, e.g., $y^1\ind{i,j}$ for the Laplace point of the first direction in the discrete case or
$\laplace{u}f$ and $\laplace{v}f$ for the corresponding smooth Laplace transforms of a parametrized surface patch with parameters $u$ and $v$. 
The \emph{Laplace transforms} of a smooth surface~$f:[0,1]^2 \to \RP^3$ parametrized along conjugate directions are given by:
\[
  \laplace{u}f = f_u - b\, f, 
  \quad\text{and}\quad
  \laplace{v}f = f_v - a\, f,
\]
where $a, b, c$ satisfy the \emph{Laplace equation}:
\[
  f_{uv} = a\, f_u + b\, f_v + c f.
\]
The Laplace transforms of a surface are also known as the focal surfaces of the two tangent congruences.
If the Laplace transform~$\laplace{u}f$ degenerates to a curve, then all the $u$-tangents along a $v$-curve intersect in a point.
Hence the $v$-curves are silhouette curves of the surface (imagine a point light on the Laplace transform curve).

The rest of the article is organized as follows. We start with planar quadrilateral nets and their duals in~\Cref{sec:qnets,sec:qstar,sec:qqstar}.
Then we investigate the structure of smooth extensions using different kinds of projective translation surface patches in~\Cref{sec:piecewise_smooth_extension}.
In~\Cref{sec:trans_in_quadric} we describe multi-nets in quadrics, which can be used to investigate circular and conical nets and their generation in the sense of F.~Klein's Erlangen Program.
The discretizations of principal curvature lines as circular and conical nets are treated in~\Cref{sec:circularnets,sec:conicalnets}.
In~\Cref{sec:discrete-extensions} we propose the discrete extension of discrete nets, that will lead to the smooth extensions in the smooth limit.
Finally, in~\Cref{sec:isotropic_line_congruences} we extend the investigations from nets to line congruences and obtain characterizations of multi-line congruences in Lie and Pl\"ucker geometries.

\section{Q-nets}
\label{sec:qnets}

In this section we introduce discrete Q- and multi-Q-nets and characterize multi-Q-nets in $\RP^n$.
In \Cref{thm:multiqnets} we prove equivalent characterizations of multi-Q-nets.
In particular we show that multi-Q-nets are discrete projective translation surfaces.

Surface parametrizations along conjugate directions have the property that infinitesimal parameter rectangles are planar.
This leads to the following natural definition of discrete conjugate nets.

\begin{definition}
  \label{def:qnet}
  A 2-dimensional \emph{discrete conjugate net} or \emph{Q-net} is a
  map $x: \Z^2 \to \RP^n$ such that the points $x\ind{i,j}, x\ind{i+1,j},
  x\ind{i+1,j+1}, x\ind{i,j+1}$ are coplanar.
\end{definition}

\begin{remark}
  We will always assume that all data is in general position, without specifying this explicitly in our statements and reasonings. In particular, in the following theorem it will be assumed, that the strips of the Q-net are not contained in a plane.
\end{remark}

\begin{figure}[t]
  \centering
  \def\tikzscale{.5} \begin{tikzpicture}[scale=\tikzscale]

\coordinate (xij1) at (-3,3);
\coordinate (xi1j) at (0,0);
\coordinate (xi1j1) at (1,2);
\coordinate (xij) at (-2,-0.5);

\coordinate (y1) at (intersection of xij--xi1j and xij1--xi1j1);
\coordinate (y2) at (intersection of xij--xij1 and xi1j--xi1j1);

\draw[fill=dgdblue!60] (xij)--(xi1j)--(xi1j1)--(xij1)--cycle;

\draw[myline] (xij)--(y1);
\draw[myline] (xij1)--(y1);
\draw[myline] (xij1)--(y2);
\draw[myline] (xi1j1)--(y2);

\node[mylabel] at (xij) [left=.1] {$x\ind{i,j}$};
\node[mylabel] at (xi1j) [below right=.1] {$x\ind{i+1,j}$};
\node[mylabel] at (xij1) [above left] {$x\ind{i,j+1}$};
\node[mylabel] at (xi1j1) [above right] {$x\ind{i+1,j+1}$};
\node[mylabel] at (y1) [right=.1] {$y^1\ind{i,j}$};
\node[mylabel] at (y2) [below=.1] {$y^2\ind{i,j}$};

\foreach \p in {xij, xij1, xi1j, xi1j1, y1, y2}
	\path (\p) [mypoint=white];

\end{tikzpicture}
  \caption{Notation for Laplace points of an elementary quadrilateral}
  \label{fig:laplace_points}
\end{figure}
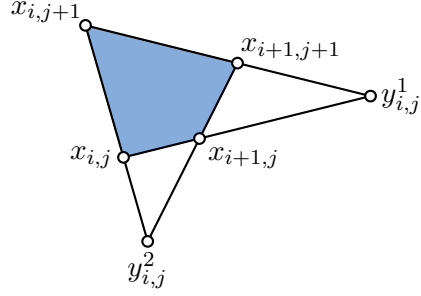

The vertices of a generic elementary quadrilateral of a Q-net satisfy the Laplace equation:
\[
  \hc x\ind{i+1,j+1} =
  a\, \hc x\ind{i+1,j} + b\, \hc x\ind{i,j+1} - c\, \hc x\ind{i,j}
\]
for some $a,b,c \neq 0$.
The coefficients $a,b,c$ may be used to obtain normalized homogeneous coordinates~$\tilde{\hc{x}}$ such that 
\[
  \tilde{\hc{x}}\ind{i+1,j+1} =
  \tilde{\hc{x}}\ind{i+1,j}
  + \tilde{\hc{x}}\ind{i,j+1}
  - \tilde{\hc{x}}\ind{i,j}
\]

Using the Laplace equations above, we obtain the intersection points of opposite pairs of edges
\begin{align*}
  \hc y^1\ind{i,j}
  &= \tilde{\hc{x}}\ind{i+1,j+1} - \tilde{\hc{x}}\ind{i,j+1}
  = \tilde{\hc{x}}\ind{i+1,j} - \tilde{\hc{x}}\ind{i,j} \quad\text{and}\\
  \hc y^2\ind{i,j}
  &= \tilde{\hc{x}}\ind{i+1,j+1} - \tilde{\hc{x}}\ind{i+1,j}
  = \tilde{\hc{x}}\ind{i,j+1} - \tilde{\hc{x}}\ind{i,j}.
\end{align*}
The points $y^1 = [\hc y^1]$ and $y^2 = [\hc{y}^2]$ are called \emph{Laplace points} (see \Cref{fig:laplace_points}), where the superscript denotes the direction of the opposite pair of edges.
The nets $y^1\ind{i,j}$ and $y^2\ind{i,j}$ generated by the Laplace points are called the \emph{Laplace transforms} of the Q-net. 

We will now define and investigate discrete nets with planar parameter quadrilaterals similar to the nets investigated in~\cite{Degen:1994:PlaneSilhouettes1} in the smooth case. 

\begin{definition}
  \label{def:multiqnet}
  A 2-dimensional \emph{multi-Q-net} is a map $x: \Z^2 \to \RP^n$ such that for every $i_0 \neq i_1$ and $j_0 \neq j_1$ the coordinate quadrilateral $x\ind{i_0, j_0}, x\ind{i_0, j_1},$ $x\ind{i_1, j_1}, x\ind{i_1, j_0}$ is planar.
\end{definition}

An example of such a net is shown in \Cref{fig:multiqnet} (left), which is a Euclidean translation net generated from two planar polygons.

\begin{figure}[tb]
  \centering
  \includegraphics[width=.45\textwidth]{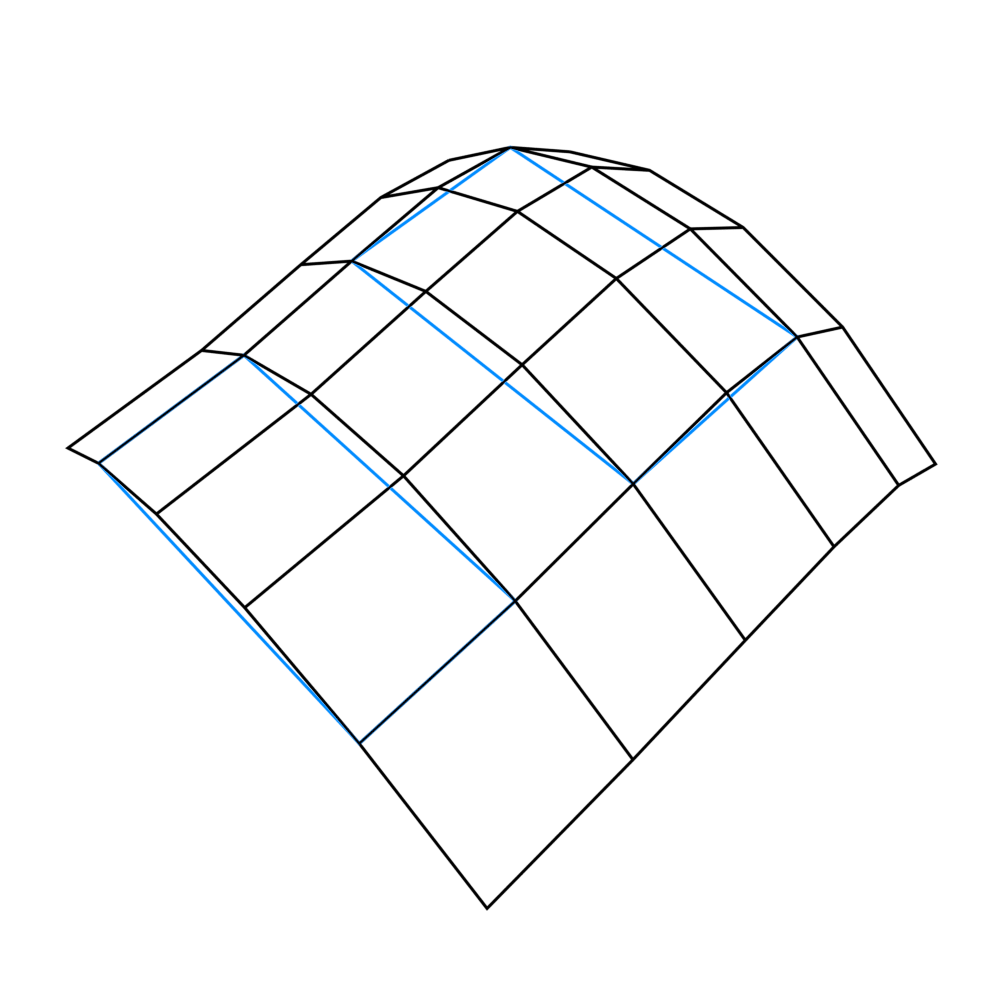}
  \hfill
  \includegraphics[width=.45\textwidth]{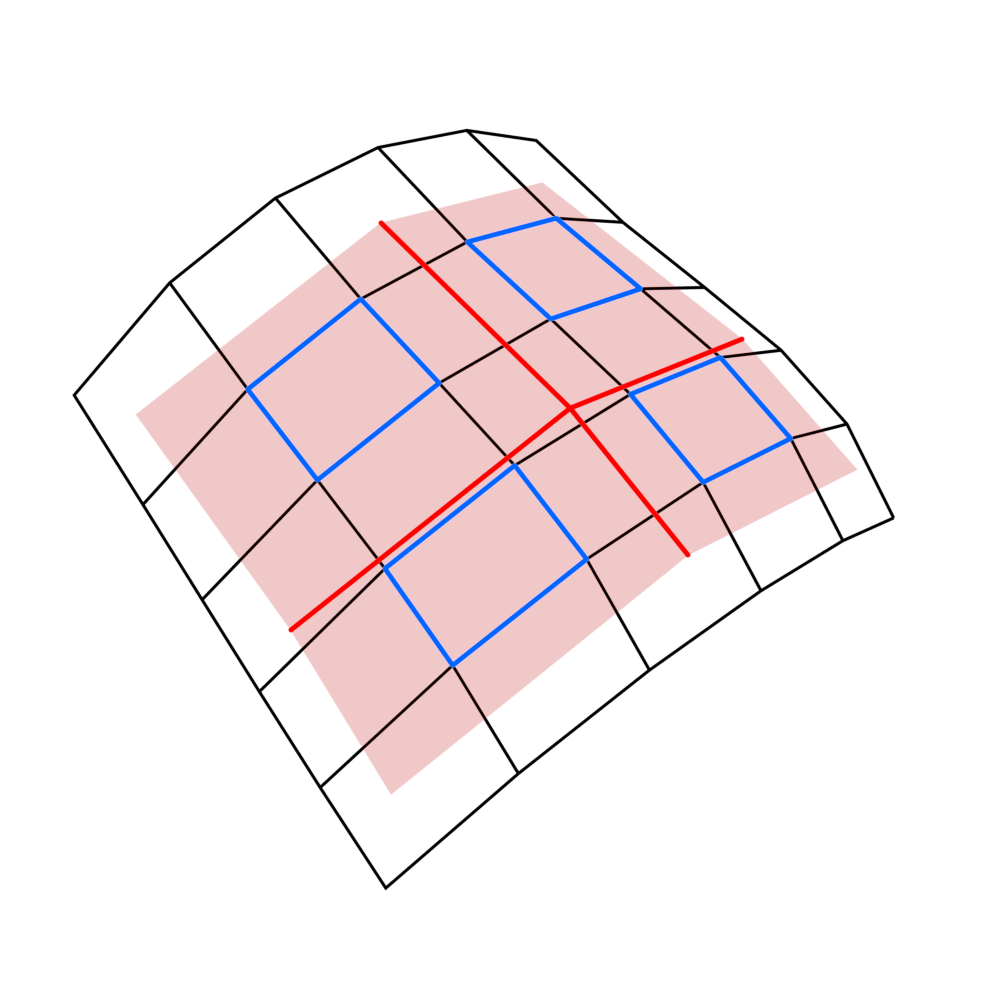}
  \caption{
    Left: A discrete multi-conjugate net as defined in \Cref{def:multiqnet}. All parameter quadrilaterals are planar. 
    Right: A discrete multi-$Q^*$-net as defined in \Cref{def:multiqstar}.
    The four red planes spanned by the quadrilaterals with blue boundary edges of arbitrary parameter rectangles intersect in a point.
  }
    \label{fig:multiqnet}
\end{figure}

\begin{theorem}
  \label{thm:multiqnets}
  Let $x :\Z^2 \to \RP^n$ be a Q-net in~$\RP^n$.
  Then the following are equivalent
  \begin{enumerate}[(i)]
  \item\label{item:multi-q-net} $x$ is a multi-Q-net.
  \item\label{item:all-perspective} Every two parameter lines of~$x$ of the same direction are in perspective with respect to a point.
  \item\label{item:neighbor-perspective} Every two neighboring discrete parameter lines of~$x$ are in perspective with respect to a point.
  \item\label{item:translation}
    There exist two discrete curves
    $\hc{p}: \Z \to \R^{n+1}$ and $\hc{q}: \Z \to \R^{n+1}$, and a point~$\hc{x}\ind{00} \in \R^{n+1}$ such that
    $
    x\ind{i,j} = [\hc{p}\ind{i} + \hc{q}\ind{j}].
    $
  \item\label{item:degenerate-laplace}
    $x$ is a $Q$-net whose two Laplace transforms degenerate to curves.
  \end{enumerate}
\end{theorem}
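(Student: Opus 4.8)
The plan is to prove the cycle (i) $\Rightarrow$ (ii) $\Rightarrow$ (iii) $\Rightarrow$ (iv) $\Rightarrow$ (i) and, separately, the equivalence (iii) $\Leftrightarrow$ (v). Two of the links are immediate. The step (ii) $\Rightarrow$ (iii) is just the specialization of ``all parameter lines of one direction'' to neighboring ones. For (iv) $\Rightarrow$ (i) I would take any coordinate quadrilateral and observe that its four homogeneous representatives satisfy the linear dependence
\[
  (\hc p\ind{i_0}+\hc q\ind{j_0}) - (\hc p\ind{i_0}+\hc q\ind{j_1}) - (\hc p\ind{i_1}+\hc q\ind{j_0}) + (\hc p\ind{i_1}+\hc q\ind{j_1}) = 0,
\]
so the four vectors span a subspace of dimension at most three and the four points are coplanar; hence $x$ is a multi-Q-net.

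For (i) $\Rightarrow$ (ii) I would fix two parameter lines of the same direction, say the rows indexed by $j_0$ and $j_1$, and look at the connecting lines $\ell_i = x\ind{i,j_0}\,x\ind{i,j_1}$. For any $i_0 \neq i_1$ the coordinate quadrilateral $x\ind{i_0,j_0}, x\ind{i_0,j_1}, x\ind{i_1,j_1}, x\ind{i_1,j_0}$ is planar by hypothesis, so $\ell_{i_0}$ and $\ell_{i_1}$ lie in a common plane and therefore intersect. Thus the lines $\ell_i$ meet pairwise, and I would invoke the elementary fact that a family of pairwise intersecting lines in $\RP^n$ is either concurrent or coplanar. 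The general position assumption that a strip is not contained in a plane excludes the second alternative, so the $\ell_i$ share a common center of perspectivity; the argument for the other direction is symmetric.

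The implication (iii) $\Rightarrow$ (iv) is the \emph{heart} of the proof. Write $C_i$ for the center of perspectivity of the neighboring columns $i,i+1$ and $R_j$ for that of the neighboring rows $j,j+1$. I would build representatives of translation form by fixing $\hc x\ind{0,0}$ and setting $\hc x\ind{0,j+1} = \hc x\ind{0,j} + \hc R_j$ along the zeroth column and $\hc x\ind{i+1,0} = \hc x\ind{i,0} + \hc C_i$ along the zeroth row, where the representatives $\hc C_i, \hc R_j$ of the centers are rescaled so that the coefficient of the previous vertex equals $1$ (possible since in general position $x\ind{i,j}$ is neither $C_i$ nor $R_j$). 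Defining $\hc p\ind{i}$ and $\hc q\ind{j}$ as the resulting partial sums and $\hat{\hc x}\ind{i,j} := \hc p\ind{i}+\hc q\ind{j}$, I would prove by induction on $i$ and $j$ that $\hat{\hc x}\ind{i,j}$ represents $x\ind{i,j}$. In the inductive step the candidate $\hc p\ind{i+1}+\hc q\ind{j+1}$ can be rewritten, using the inductive hypothesis, both as $\hat{\hc x}\ind{i,j+1}+\hc C_i$ and as $\hat{\hc x}\ind{i+1,j}+\hc R_j$; the first exhibits it on the edge line $x\ind{i,j+1}x\ind{i+1,j+1}$ through $C_i$, the second on the edge line $x\ind{i+1,j}x\ind{i+1,j+1}$ through $R_j$. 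These two edge lines of the elementary quadrilateral meet only in $x\ind{i+1,j+1}$, which forces the candidate to represent that vertex and closes the induction, yielding the translation representation (iv).

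Finally, for (iii) $\Leftrightarrow$ (v) I would use the difference-vector description $\hc y^1\ind{i,j} = \tilde{\hc x}\ind{i+1,j} - \tilde{\hc x}\ind{i,j}$ recalled above: the Laplace point $y^1\ind{i,j}$ is the common point of the two $i$-edges $x\ind{i,j}x\ind{i+1,j}$ and $x\ind{i,j+1}x\ind{i+1,j+1}$. Hence $y^1$ degenerates to a curve exactly when $y^1\ind{i,j}$ is independent of $j$, i.e.\ when for each $i$ all edges joining column $i$ to column $i+1$ pass through one point, which is perspectivity of the neighboring columns; the symmetric statement for $y^2$ and neighboring rows completes (iii) $\Leftrightarrow$ (v). I expect the main obstacle to be the global synchronization inside (iii) $\Rightarrow$ (iv): turning the local, merely projectively defined perspectivities into one consistent family of homogeneous representatives. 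The device that resolves it is precisely the double description of each new vertex as lying on two distinct edge lines, one through $C_i$ and one through $R_j$, whose intersection pins the representative down uniquely.
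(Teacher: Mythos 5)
Your proposal is correct, but it organizes the equivalences differently from the paper. The paper closes the main cycle as (i)~$\Rightarrow$~(ii)~$\Rightarrow$~(iii)~$\Rightarrow$~(i), where the hard step (iii)~$\Rightarrow$~(i) is done synthetically on a $2\times 2$ block: the line $h$ joining the two Laplace points $y^1\ind{i,j}$ and $y^1\ind{i+1,j}$ lies in the three planes of the rows, which forces the diagonal lines $g^1_j$ to be concurrent; it then obtains the translation form (iv) from (i) and passes (iv)~$\Rightarrow$~(v)~$\Rightarrow$~(iii). You instead route the cycle through (iii)~$\Rightarrow$~(iv)~$\Rightarrow$~(i), proving (iii)~$\Rightarrow$~(iv) by an inductive construction of compatible homogeneous representatives, pinning down each new vertex as the unique intersection of the edge line through $C_i$ with the edge line through $R_j$, and then getting multi-planarity for free from the identity $\hc x\ind{i_0,j_0}-\hc x\ind{i_0,j_1}-\hc x\ind{i_1,j_0}+\hc x\ind{i_1,j_1}=0$. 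This is a genuine trade: your version replaces the paper's projective incidence argument by a linear-algebraic one, and it actually supplies in full the global consistency of the normalization that the paper's (i)~$\Rightarrow$~(iv) step only asserts (``we find coordinates such that \dots''). Your (i)~$\Rightarrow$~(ii) is also stated for arbitrary, not just neighboring, parameter lines, which is what (ii) literally requires. Two small points to make explicit: the induction as written covers one orthant and should be repeated (symmetrically) for the other three, as the paper notes; and the step where the candidate vector is identified with $x\ind{i+1,j+1}$ needs the standing general-position assumptions (the candidate is nonzero and the two edge lines of the elementary quadrilateral meet in a single point), which is consistent with the paper's blanket genericity remark.
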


\begin{proof}
  \eqref{item:multi-q-net} $\Rightarrow$ \eqref{item:all-perspective}:
  So assume that $x$ is a multi-Q-net.
  Then generically, the points of two neighboring quadrilaterals span a 3-dimensional subspace. Consider the lines $\ell^1_j = \lspan{x\ind{i,j}, x\ind{i+1,j}}$ spanned by the edges in first net direction as shown in \Cref{fig:perspective_multiqnet} (left).
  Since the quadrilaterals are planar, the lines $\ell^1_j$ and $\ell^1_{j+1}$ intersect.
  Additionally, the lines $\ell^1_{j}$ and $\ell^1_{j+2}$ intersect, since $x$ is a multi-conjugate net.
  So every two of the three lines $\ell^1_{j}$, $\ell^1_{j+1}$, and $\ell^1_{j+2}$ intersect.
  Since the three lines do not lie in one plane by genericity assumption, the three lines intersect in one point. 
  This is the common Laplace point $y^1\ind{i,j} = y^1\ind{i,j+1}$.
  Hence all Laplace points of a (generic) strip coincide and the parameter lines $x(i,\cdot)$ and $x(i+1,\cdot)$ are in perspective with respect to the Laplace point $y^1\ind{i} :=   y^1\ind{i,j}$ of the coordinate strip as shown in \Cref{fig:perspective_multiqnet}.

  \eqref{item:all-perspective} $\Rightarrow$ \eqref{item:neighbor-perspective}:
  Obvious.
    
  \eqref{item:neighbor-perspective} $\Rightarrow$ \eqref{item:multi-q-net}:
  Assume that every two neighboring parameter polygons are in perspective with respect to a point.
  Then the lines spanned by the edges across the coordinate strips intersect in the center of the perspectivity (see \Cref{fig:perspective_multiqnet}, left).
  So all parameter rectangles
  $(x\ind{i,j}, x\ind{i,j+k}, x\ind{i+1,j+k}, x\ind{i+1,j})$ and
  $(x\ind{i,j}, x\ind{i+k,j}, x\ind{i+k,j+1}, x\ind{i,j+1})$ along coordinate strips are planar.
  In particular, $x$ is a Q-net.
  
  Now consider a $2\times 2$ piece of the net as shown in \Cref{fig:perspective_multiqnet}.
  We need to show that the lines
  $g^{1}_j = \lspan{x\ind{i,j},x\ind{i+2,j}}$,
  $g^1_{j+1} = \lspan{x\ind{i,j+1},x\ind{i+2,j+1}}$,
  $g^1_{j+2} = \lspan{x\ind{i,j+2},x\ind{i+2,j+2}}$
  intersect in a point.
  The line~$h$ connecting the two Laplace points $y^1_{i,j}$ and $y^1_{i+1,j}$ is contained in the three planes spanned by
  $(x\ind{i,j}, x\ind{i+1,j}, x\ind{i+2,j})$,
  $(x\ind{i,j+1}, x\ind{i+1,j+1}, x\ind{i+2,j+1})$, and
  $(x\ind{i,j+2}, x\ind{i+1,j+2}, x\ind{i+2,j+2})$, respectively.
  Since the strip rectangles are planar, the lines $g^{1}_j,g^1_{j+1}$ and $g^1_{j+1},g^1_{j+2}$ intersect.
  The intersection points
  $p_j = g^{1}_j \cap g^1_{j+1} $ and
  $p_{j+1} =  g^1_{j+1} \cap g^1_{j+2}$ have to lie on the line~$h$ connecting the two Laplace points.
  Hence $p_j = g_{j+1} \cap h = p_{j+1}$ and the quadrilateral
  $(x\ind{i,j},x\ind{i+2,j},x\ind{i+2,j+2},x\ind{i,j+2})$ is planar.
  This implies the planarity of abitrary parameter rectangles and thus $x$ is a multi-Q-net.

  \eqref{item:multi-q-net}~$\Rightarrow$~\eqref{item:translation}:
  Let $x$ be a multi-Q-net.
  Then for an elementary quadrilateral of~$x$ there exist homogeneous coordinates~$\hc x$ for the vertices and the two Laplace points~$y^1, y^2$ such that
  \begin{align*}
    \hc{y}^1\ind{i,j} &=  \hc{x}\ind{i+1,j}-\hc{x}\ind{i,j} = \hc{x}\ind{i+1,j+1}-\hc{x}\ind{i,j+1} 
    \quad\text{and} \\
    \hc{y}^2\ind{i,j} &= \hc{x}\ind{i,j+1}-\hc{x}\ind{i,j} = \hc{x}\ind{i+1,j+1}-\hc{x}\ind{i+1,j}\;.
  \end{align*}
  Hence $\hc{x}\ind{i+1,j+1} =
  \hc{x}\ind{i,j}+\hc{y}^1\ind{i,j}+\hc{y}^2\ind{i,j}$. Since the Laplace points
  are unique for entire coordinate strips in the net, we find coordinates such that
  \begin{align*}
    \hc{x}\ind{i+1,j} &= \hc{x}\ind{i,j} + \hc{y}^1\ind{i} 
    \quad \text{for all $j$, and} \\
    \hc{x}\ind{i,j+1} &= \hc{x}\ind{i,j} + \hc{y}^2\ind{j} 
    \quad \text{for all $i$.}
  \end{align*}
  Thus
  \begin{align}
  \hc{x}\ind{i,j} = \hc{x}\ind{0,0} 
  + \sum_{k=0}^{i-1}\hc{y}^1\ind{k}
    + \sum_{\ell=0}^{j-1}\hc{y}^2\ind{\ell}\;.
    \label{eq:translation_laplace_curves}
  \end{align}
  With 
  $\hc{p}\ind{i} = \tfrac12 \hc{x}\ind{0,0} + \sum_{k=0}^{i-1}\hc{y}^1\ind{k}$, and 
  $\hc{q}\ind{j} = \tfrac12 \hc{x}\ind{0,0} + \sum_{\ell=0}^{j-1}\hc{y}^2\ind{\ell}$ 
  we obtain the desired representation of the net~$x$ for non-negative~$i$ and~$j$.
  This proves the desired structure for the positive orthant.
  By the same arguments, the polygons~$\hc{p}$ and~$\hc{q}$ can easily be extended to the other three orthants.
  
  \eqref{item:translation} $\Rightarrow$ \eqref{item:degenerate-laplace}:
  If $x$ is a projective translation surface, then there exist $\hc p_i$ and $\hc q_j$, such that  $\hc x = \hc p_i + \hc q_j$. But then Laplace transforms are given by
  \begin{align*}
    \hc y^1\ind{i,j}
    &= \hc x\ind{i+1,j} - \hc x\ind{i,j}
    = \hc p_i \quad\text{independent of $j$, and}\\
    \hc y^2\ind{i,j}
    &= \hc x\ind{i,j+1} - \hc x\ind{i,j}
    = \hc q_j \quad\text{independent of $i$.}
  \end{align*}
  So the Laplace transforms degenerate to the curves~$y^1\ind{i} := [\hc p\ind{i}]$ and~$y^2\ind{j} = [\hc q_j]$.

  \eqref{item:degenerate-laplace} $\Rightarrow$ \eqref{item:neighbor-perspective}:
  Let $x$ be a $Q$-net with degenerate Laplace transforms~$y^1\ind{i}$ and $y^2\ind{j}$.
  Then all the lines spanned by the edges $(x\ind{i,j},x\ind{i+1,j})$ (resp.\ $(x\ind{i,j}, x\ind{i,j+1})$) pass through the common Laplace point~$y^1\ind{i}$ (resp.~$y^2\ind{j}$).
  Thus neighboring parameter lines are in perspective with respect to the corresponding Laplace points.
\end{proof}

\begin{figure}[t]
  \centering
  \def\tikzscale{0.8} \begin{tikzpicture}[scale=\tikzscale]

\coordinate (xij) at (-2,0);
\coordinate (xi1j) at (0,-1.5);
\coordinate (xi1j1) at (.2,3);
\coordinate (xij1) at (-3,2.5);
\coordinate (xij2) at (-2,5);

\coordinate (xi2j1) at (3,1.5);

\coordinate (y1) at (intersection of xij--xi1j and xij1--xi1j1);
\coordinate (y2) at (intersection of xij--xij1 and xi1j--xi1j1);
\coordinate (xi2j) at ($(y2)!.8!(xi2j1)$);

\coordinate (xi1j2) at ($(y1)!1.5!(xij2)$);

\coordinate (y11) at (intersection of xi1j--xi2j and xi1j1--xi2j1);
\coordinate (y22) at (intersection of xij1--xij2 and xi1j1--xi1j2);

\coordinate (xi2j2) at (intersection of y11--xi1j2 and y22--xi2j1);

\coordinate (p) at (intersection of xij2--xi2j2 and xij--xi2j);

\draw[myline, fill=lightblue!20, dashed] (xij)--(xi1j)--(xi1j2)--(xij2)--cycle;
\draw[myline, fill=lightblue!20, dashed] (xi2j)--(xi1j)--(xi1j2)--(xi2j2)--cycle;
\draw[myline] (xij)--(xi1j)--(xi1j1)--(xij1)--cycle;
\fill[lightblue, opacity=.5] (xij)--(xi1j)--(xi1j1)--(xij1)--cycle;
\draw[myline] (xij2)--(xi1j2)--(xi1j1)--(xij1)--cycle;
\fill[lightblue, opacity=.5] (xij2)--(xi1j2)--(xi1j1)--(xij1)--cycle;

\draw[myline] (xi2j)--(xi1j)--(xi1j1)--(xi2j1)--cycle;
\fill[lightblue, opacity=.5] (xi2j)--(xi1j)--(xi1j1)--(xi2j1)--cycle;
\draw[myline] (xi2j2)--(xi1j2)--(xi1j1)--(xi2j1)--cycle;
\fill[lightblue, opacity=.5] (xi2j2)--(xi1j2)--(xi1j1)--(xi2j1)--cycle;

\draw[myline] (xij)--(y1);
\draw[myline] (xij1)--(y1);
\draw[myline] (xij2)--(y1);

\draw[myline] (xi2j)--(y11);
\draw[myline] (xi2j1)--(y11);
\draw[myline] (xi2j2)--(y11);

\draw[myline, dotted] (p)--(xij);
\draw[myline, dotted] (p)--(xij1);
\draw[myline, dotted] (p)--(xij2);
\draw[myline, blue!70] ($(p)!-.1!(y11)$)--($(y1)!-.1!(y11)$);
\node[mylabel] at ($(p)!.9!(y1)$) {$h$};

\node[mylabel] at (xij) [below left=.1] {$x\ind{i,j}$};
\node[mylabel] at (xi1j) [below right=.1] {$x\ind{i+1,j}$};
\node[mylabel] at (xi2j) [below right=.1] {$x\ind{i+2,j}$};

\node[mylabel] at (xij1) [above left=.0] {$x\ind{i,j+1}$};
\node[mylabel, fill=lightblue!60] at (xi1j1) [below=.2] {$x\ind{i+1,j+1}$};
\node[mylabel, fill=lightblue!60] at (xi2j1) [left=.2] {$x\ind{i+2,j+1}$};

\node[mylabel] at (xij2) [above left=.1] {$x\ind{i,j+2}$};
\node[mylabel] at (xi1j2) [above right] {$x\ind{i+1,j+2}$};
\node[mylabel] at (xi2j2) [above right] {$x\ind{i+2,j+2}$};

\node[mylabel] at (y1) [left=.1] {$y^1\ind{i,j}=y^1\ind{i,j+1}$};
\node[mylabel] at (y11) [above right=.1] {$y^1\ind{i+1,j}=y^1\ind{i+1,j+1}$};

\node[mylabel] at (p) [below=.2] {$p_j = p_{j+1}$};

\node[mylabel, fill=lightblue!60] at ($(xij2)!0.47!(xi1j2)$) [below right] {$l^1_{j+2}$}; 
\node[mylabel, fill=lightblue!60] at ($(xij1)!0.6!(xi1j1)$) [above left] {$l^1_{j+1}$}; 
\node[mylabel, fill=lightblue!60] at ($(xij)!0.5!(xi1j)$) [above right] {$l^1_{j}$}; 

\foreach \p in {xij, xij1, xi1j, xi1j1, xij2, xi1j2, xi2j, xi2j1, xi2j2}
	\path (\p) [mypoint=white];

\foreach \p in {y1, y11}
	\path (\p) [mypoint=blue!70];

\path (p) [mypoint=red];

\end{tikzpicture}
  \caption{The Laplace points of a multi-Q-net coincide along
    quadrilateral strips. Hence arbitrary parameter lines of
    multi-Q-nets are in perspective with respect to common
    Laplace points.}
  \label{fig:perspective_multiqnet}
\end{figure}
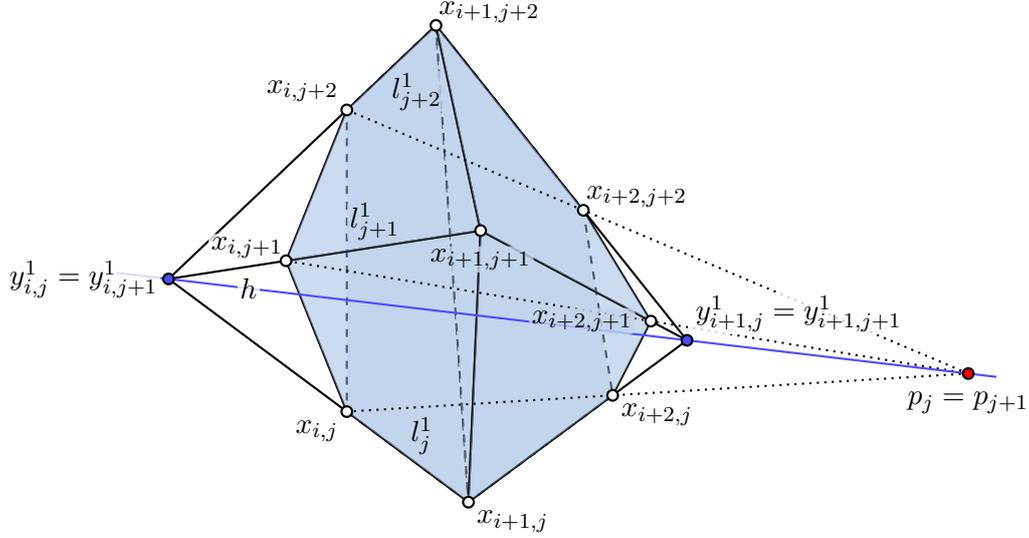

Geometrically, this means, that for multi-Q-nets, there exist homogeneous coordinates for the vertices in~$\R^{n+1}$, such that all the faces are parallelograms.

If all parameter quadrilaterals of a smooth surface are planar, then two parameter curves of one of the two families are in perspective with respect to a point.
In the limit, this implies, that the Laplace transforms degenerate to curves.
These surfaces with degenerate Laplace transforms have been characterized and they turn out to be projective translation surfaces~(see~\cite{Bol:ProjektiveDifferentialgeometrie}, or \cite[Sect.~2]{Degen:1994:PlaneSilhouettes1} for a proof).
A parameter line of one direction with the property, that all tangents of the other direction intersect in one point, i.e., lie on a cone, is called a silhouette line.

\paragraph{Cauchy problem}
As a consequence of the above theorem we can read off the initial data needed for the construction of a multi-$Q$-net.

\emph{Homogeneous initial data.}
Given the two degenerate Laplace transform curves $\hc{y}^1, \hc{y}^2: \Z \to \R^4$ and an initial point $\hc{x}\ind{00} \in \R^4$.
Then there exists a unique multi-Q-net~$x:\Z^2 \to \RP^3$ given by equation~\eqref{eq:translation_laplace_curves}.

\emph{Projective initial data.}
Given the two Laplace transformed curves~$y^1, y^2\in \RP^3$ and an initial point~$x\ind{00} \in \RP^3$.
Then this will \emph{not} define a unique multi-Q-net, since the combination of Laplace transform and initial point fixes a line, but not a point on this line.
So we need an extra scalar value (mimicking the homogeneous coordinate) for the edges of the coordinate axes $x\ind{\cdot,j}$ and $x\ind{i,\cdot}$ to obtain a unique multi-Q-net. 

To avoid this extra scalar function on the coordinate axes, we can prescribe two quadrilateral coordinate strips satisfying the perspectivity property of Theorem~\ref{thm:multiqnets}~\eqref{item:neighbor-perspective} for the construction of multi-Q-nets in~$\RP^3$, see \Cref{fig:projective_translation_cauchy}.

\begin{figure}[bt]
  \includegraphics[width=.5\linewidth]{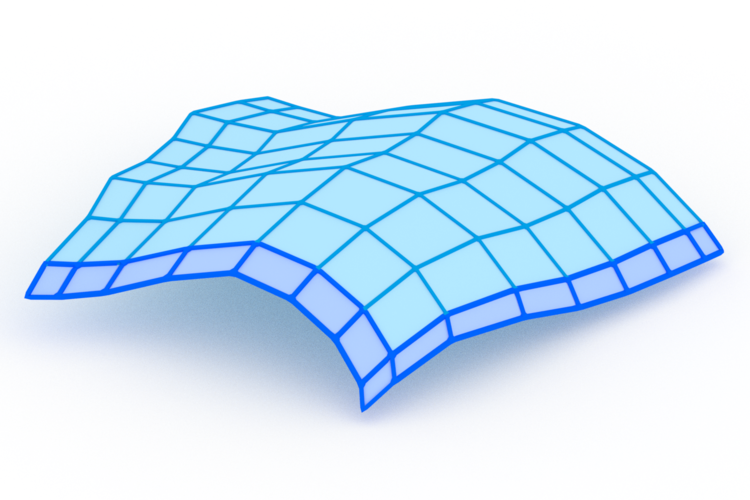}
  \caption{Discrete multi-Q-net constructed from two initial quadrilateral strips.}
  \label{fig:projective_translation_cauchy}
\end{figure}


\section{$Q^*$-nets}
\label{sec:qstar}

Looking at elementary quadrilaterals, 2-dimensional $Q$- and $Q^*$-nets in $\RP^3$ are just dual descriptions of similar objects, i.e., every $Q$-net defines a $Q^*$-net and vice versa.
Hence the statements proved in the previous section may easily be translated to $Q^*$-nets.

\begin{definition}
  \label{def:qstar}
  A 2-dimensional \emph{$Q^*$-net} is a map $P: \Z^2 \to
  \{\text{planes in }\RP^3\}$ such that the four planes $P\ind{i,j},
  P\ind{i+1,j}, P\ind{i+1,j+1}, P\ind{i,j+1}$ intersect.
\end{definition}

Once the definition is extended to parameter rectangles, $Q$- and $Q^*$-nets become distinct and reveal different properties. 

\begin{definition}
  \label{def:multiqstar}
  A $2$-dimensional \emph{multi-$Q^*$-net} is a map $P: \Z^2 \to \{\text{planes in }\RP^3\}$ such that the four planes $P\ind{i_0, j_0}, P\ind{i_1,j_0},  P\ind{i_1,j_1}, P\ind{i_0,j_1}$ intersect.
\end{definition}

Since we are considering nets in $\RP^3$, points and planes are dual to each other and lines are dual to lines, e.g., lines generated by two points are dual to planes intersecting in a line. For neighboring quadrilaterals of a $Q^*$-net we obtain the following statement dual to \Cref{thm:multiqnets}.
We omit some of the equivalent statements, since their dual formulation is less intuitive and we will not use them in the following.

\begin{theorem}
  \label{thm:coplanar_edges}
  Let $P:\Z^2 \to \{ \text{planes in } \RP^3 \}$ be a generic~$Q^*$-net. Then the following are equivalent:
  \begin{enumerate}[(i)]
  \item\label{item:multiqstar} $P$ is a multi-$Q^*$-net.
  \item\label{item:planarpolygons} $P$ has planar parameter lines.
  \end{enumerate}
\end{theorem}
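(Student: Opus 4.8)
The plan is to exploit the point–plane duality of $\RP^3$ and reduce the claim to \Cref{thm:multiqnets}. Under this duality a plane $P\subset\RP^3$ is encoded by a homogeneous covector $\hc P\in\R^4$, regarded as a point of the dual space $(\RP^3)^{\ast}$, and four planes are concurrent exactly when their four covectors are linearly dependent, i.e.\ when the four corresponding points of $(\RP^3)^{\ast}$ are coplanar. Hence a (generic) $Q^*$-net is nothing but a (generic) $Q$-net in $(\RP^3)^{\ast}$, and, comparing \Cref{def:multiqstar} with \Cref{def:multiqnet}, a multi-$Q^*$-net is precisely a multi-Q-net in $(\RP^3)^{\ast}$. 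The whole equivalence should therefore follow by transporting \Cref{thm:multiqnets} through the duality; the reason only one of the five dual conditions is recorded in the statement is that the dual forms of the remaining ones (translation structure, perspectivity) are geometrically opaque back in $\RP^3$.

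To make this precise I would first fix the dual dictionary for the lower-dimensional incidences. An edge of the $Q^*$-net — the line $P\ind{i,j}\cap P\ind{i+1,j}$ in which two consecutive planes of a strip meet — corresponds, since lines are self-dual, to the edge $\lspan{\hc P\ind{i,j},\hc P\ind{i+1,j}}$ of the dual $Q$-net in $(\RP^3)^{\ast}$. The primal Laplace point, being the intersection of two consecutive edges, dualizes to the \emph{plane} spanned by the two corresponding dual edge-lines; degeneracy of the dual Laplace transform to a curve therefore means that, for each strip, all the edge-lines $P\ind{i,j}\cap P\ind{i+1,j}$ (fixed $i$, varying $j$) lie in one common plane, and symmetrically in the other direction. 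This coplanarity of the strip edge-lines is exactly the content of \eqref{item:planarpolygons}, the planarity of the parameter lines.

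With the dictionary in place both implications are immediate. For \eqref{item:multiqstar}~$\Rightarrow$~\eqref{item:planarpolygons} I read $P$ as a multi-Q-net in $(\RP^3)^{\ast}$, invoke \eqref{item:multi-q-net}~$\Rightarrow$~\eqref{item:degenerate-laplace} of \Cref{thm:multiqnets} to collapse the dual Laplace transforms to curves, and dualize back to obtain the coplanar edge-lines, i.e.\ planar parameter lines. For the converse, planar parameter lines dualize to degenerate dual Laplace transforms, so \eqref{item:degenerate-laplace}~$\Rightarrow$~\eqref{item:multi-q-net} of \Cref{thm:multiqnets}, applied in $(\RP^3)^{\ast}$, yields a multi-Q-net there, that is, a multi-$Q^*$-net. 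The one delicate point — and the step I would spend the most care on — is the dual bookkeeping itself: checking that ``four planes concurrent'' really corresponds to ``four dual points coplanar'' rather than to some degenerate incidence, that the Laplace \emph{point} dualizes to the correct Laplace \emph{plane} with the two coordinate directions matched consistently, and that the genericity hypotheses used in \Cref{thm:multiqnets} (strips not contained in a plane, three relevant lines not coplanar) translate into the genericity already assumed for $P$. Once the dictionary is verified, no geometric argument is needed beyond \Cref{thm:multiqnets}, though one could equally well mirror its $\RP^3$ proof line by line in the dual picture.
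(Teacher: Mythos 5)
Your proposal is correct and follows essentially the same route as the paper, which likewise disposes of the theorem in one line by identifying a $Q^*$-net with a $Q$-net in the dual space and invoking \Cref{thm:multiqnets}. The only cosmetic difference is that you match ``planar parameter lines'' to the degenerate-Laplace-transform condition \eqref{item:degenerate-laplace} while the paper matches it to the perspectivity condition \eqref{item:all-perspective}; since these are equivalent in \Cref{thm:multiqnets}, and your dual dictionary (concurrent planes $\leftrightarrow$ coplanar dual points, Laplace point $\leftrightarrow$ plane containing the strip's edge-lines) is the correct one, the two arguments coincide.
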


\begin{proof}
  \eqref{item:multiqstar} is dual to \Cref{thm:multiqnets}~\eqref{item:multi-q-net} and \eqref{item:planarpolygons} is dual to \Cref{thm:multiqnets}~\eqref{item:all-perspective}. Hence the proof follows by duality in~$\RP^3$.
\end{proof}

Smooth surfaces parameterized along conjugate directions have the property that the tangent planes of an infinitesimal parameter rectangle intersect in a point.
Furthermore, a smooth surface parametrized by conjugate directions such that the tangent planes of arbitrary parameter rectangles intersect has planar parameter lines.
This follows easily from taking appropriate limits of the discrete mulit-$Q^*$-nets.
A detailed description of the smooth case is given in~\cite{Degen:1994:PlaneSilhouettes1}.


\section{$(Q + Q^*)$-nets}
\label{sec:qqstar}

In the smooth setting, parametrizations along conjugate directions satisfy the $Q$-net and the $Q^*$-net property infinitesimally.
In the discrete case, this leads to a family of nets classified in the following theorem.

\begin{theorem}
  \label{thm:qqstarnets}
  Let $x: \Z^2 \to \RP^3$ be a multi-$Q$- and multi-$Q^*$-net. Then in the generic case
  \begin{compactenum}
  \item all Laplace points lie on a line, and
  \item all parameter line planes belong to a pencil.
  \end{compactenum}
\end{theorem}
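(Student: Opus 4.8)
We have a net $x:\Z^2 \to \RP^3$ that is simultaneously a multi-$Q$-net and a multi-$Q^*$-net. I need to understand carefully what each condition contributes. By \Cref{thm:multiqnets}, the multi-$Q$-net property gives that every two parameter lines of the same direction are in perspective, and in particular that the two Laplace transforms $y^1\ind{i}$ and $y^2\ind{j}$ degenerate to curves: all Laplace points $y^1\ind{i,j}$ along a coordinate strip coincide to a single point $y^1\ind{i}$, and likewise $y^2\ind{i,j} = y^2\ind{j}$. The multi-$Q^*$-net property is the dual condition, so by \Cref{thm:coplanar_edges} the \emph{parameter line planes} are planar, i.e.\ the plane spanned by the dual strip in each direction is well-defined and consistent along the strip. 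So the content of the theorem is to show that these two degenerate structures — the Laplace curves and the parameter-plane pencils — collapse even further: the Laplace points become collinear, and the parameter line planes all meet in a common line (belong to a pencil).

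\textbf{Approach via duality and the common structure.}

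The plan is to exploit the symmetry between the two conditions rather than proving each bullet from scratch. First I would observe that the two bullets are dual to each other in $\RP^3$: a line in $\RP^3$ is self-dual, and the dual of ``all Laplace points lie on a line'' is precisely ``all parameter planes belong to a pencil'' (a pencil of planes is exactly the set of planes through a fixed line, which is dual to the set of points on a fixed line). So it suffices to prove the first bullet, and the second follows by applying that result to the dual net. I would set this up explicitly at the start so that the real work is reduced to a single claim about Laplace points.

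\textbf{Proving collinearity of the Laplace points.}

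For the Laplace points, I would proceed by looking at a single elementary $2\times 2$ configuration and then propagate. From the multi-$Q$-net structure we have the clean representation $\hc{x}\ind{i,j} = \hc{p}\ind{i} + \hc{q}\ind{j}$ with Laplace directions $\hc{y}^1\ind{i} = \hc p\ind{i+1} - \hc p\ind{i}$ and $\hc{y}^2\ind{j} = \hc q\ind{j+1} - \hc q\ind{j}$ independent of the transverse index. I now bring in the $Q^*$-condition. The multi-$Q^*$-net property says that the tangent planes along a strip are consistent; dualizing the argument of \Cref{thm:multiqnets}, this forces a one-directional perspectivity on the \emph{dual} net, which translates back into a linear relation among the homogeneous vectors $\hc y^1\ind{i}$ (and, symmetrically, among the $\hc y^2\ind{j}$). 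Concretely, I expect the $Q^*$-condition to impose that consecutive Laplace points $y^1\ind{i}, y^1\ind{i+1}$ and $y^2\ind{j}, y^2\ind{j+1}$ all lie in one common plane or even one common line. The key computation is to show that the line spanned by two consecutive Laplace points in one direction coincides with the axis of the plane pencil from the dual condition — i.e.\ the line $h$ connecting $y^1\ind{i,j}$ and $y^1\ind{i+1,j}$ that already appeared in the proof of \Cref{thm:multiqnets}.

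\textbf{Where the obstacle lies.}

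The main obstacle is correctly extracting the algebraic consequence of the multi-$Q^*$-net condition in the translation-net coordinates. The $Q$-condition hands us a very explicit additive parametrization, but the $Q^*$-condition is stated dually (on planes), and I must translate ``the four planes along a parameter rectangle intersect'' into a statement about the vectors $\hc p\ind{i}, \hc q\ind{j}$ and their differences. I anticipate this amounts to showing that all $\hc y^1\ind{i}$ lie in a fixed $2$-plane of $\R^4$ through the origin together with some fixed reference, equivalently that the curve $\hc p$ is planar, and symmetrically for $\hc q$; the intersection line of these two planes is then the common line carrying all Laplace points. Verifying that the genericity assumption rules out degenerate collapses, and that the two planarity statements glue to give a single line (not just two separate lines), is the delicate point. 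Once that planarity is established the collinearity of all Laplace points is immediate, and the plane-pencil statement follows by the duality set up at the beginning.
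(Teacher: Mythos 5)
Your proposal has two genuine problems. The first is that the decisive step is never actually carried out: at every point where the multi-$Q^*$-condition has to be converted into a constraint on the Laplace points you write ``I expect'', ``I anticipate'', ``the key computation is to show'', but the computation is absent. The mechanism is in fact a one-line incidence observation, and it is exactly how the paper argues: by Theorem~\ref{thm:coplanar_edges} the multi-$Q^*$-property makes every parameter polygon $x(\cdot,j)$ planar, with supporting plane $Y^1_j$; the Laplace point $y^1_i$ lies on the edge line through $x_{i,j}$ and $x_{i+1,j}$, and this line is contained in $Y^1_j$; hence \emph{every} plane $Y^1_j$ contains \emph{every} point $y^1_i$. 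Generically (the $Y^1_j$ not all equal, the $y^1_i$ not all equal) this single statement yields both conclusions at once: the points $y^1_i$ lie on the line $\bigcap_j Y^1_j$, and the planes $Y^1_j$ all contain the line spanned by the $y^1_i$, i.e.\ form a pencil; the same holds in the other direction, and the excluded cases ($Y^1_j$ all equal: planar net; $y^1_i$ all equal: cone) are precisely the non-generic ones. So neither the duality reduction nor the translation-coordinate algebra is needed -- although your route could be completed: planarity of all polygons $x(\cdot,j)$ is, in translation coordinates, generically equivalent to the differences $\hc p_{i+1}-\hc p_i$ spanning only a $2$-dimensional subspace $U \subset \R^4$, which puts all $y^1_i$ on the projective line $\Proj(U)$ and all planes $Y^1_j$ through that line.

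The second problem is an error in what you set out to prove. You insist that ``the two planarity statements glue to give a single line (not just two separate lines)'', the common line being ``the intersection line of these two planes'' $U$ and $V$ in $\R^4$. This is false in the generic case: two generic $2$-dimensional linear subspaces of $\R^4$ meet only in the origin (so their intersection is at most a single point of $\RP^3$, never a line), and correspondingly the two families of Laplace points lie on two generically \emph{skew} lines -- stated explicitly in the caption of Figure~\ref{fig:qqstar-net} and in Degen's smooth counterpart (Theorem~\ref{thm:Degen}), where the two pencils have two distinct axes. Indeed, if both families lay on one line $L$, then every plane $Y^1_j$ and every plane $Y^2_i$ would contain $L$, hence $x_{i,j} \in Y^1_j \cap Y^2_i = L$ for all $i,j$, and the whole net would collapse onto $L$. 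So item (1) of the theorem must be read per family, and the ``delicate point'' you flag is actually an impossibility. (Your duality reduction of item (2) to item (1) is sound in spirit, but it additionally requires identifying the Laplace points of the dual configuration with the duals of the parameter-line planes of $x$, which you do not establish; in the incidence argument above both items drop out simultaneously, so no reduction is required.)
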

\begin{proof}
  Let $y^1\ind{i}$ and $y^2\ind{j}$ denote the Laplace points of the coordinate strips, and $Y^1\ind{j}$ and $Y^2\ind{i}$ denote the planes containing the parameter polygons $x(\cdot,j)$ and $x(i,\cdot)$, respectively. 
  
  Then all the planes $Y^1\ind{j}$ (resp.~$Y^2\ind{i}$) pass through the Laplace points $y^1\ind{i}$ (resp.~$y^2\ind{j}$), since
  \[
    y^1\ind{i} \in \lspan{x\ind{i,j},x\ind{i+1,j}} \subset Y^1\ind{j}.
  \]
  If we assume, that neither all Laplace points of one direction nor
  all parameter line planes of one direction coincide, we obtain the
  desired configuration of Laplace points and coordinate planes.

  If, for example, all the coordinate lines of one direction lie in
  one plane, e.g. $Y^1\ind{j} = \mathtt{const}$, then the entire net is
  planar. If, dually, all Laplace points of one direction coincide,
  then the net is a cone whose unique Laplace point of this direction is the apex.
\end{proof}

These nets can be constructed from two initial planar polygons and two linear families of Laplace points on the respective planes.
An example of such a net is shown in \Cref{fig:qqstar-net}.
All the points of the example net lie on a supercyclide, as the two Laplace transforms of an arbitrary supercyclide are contained in two lines.

\begin{figure}[tb]
  \includegraphics[width=.7\textwidth, angle=180 ]{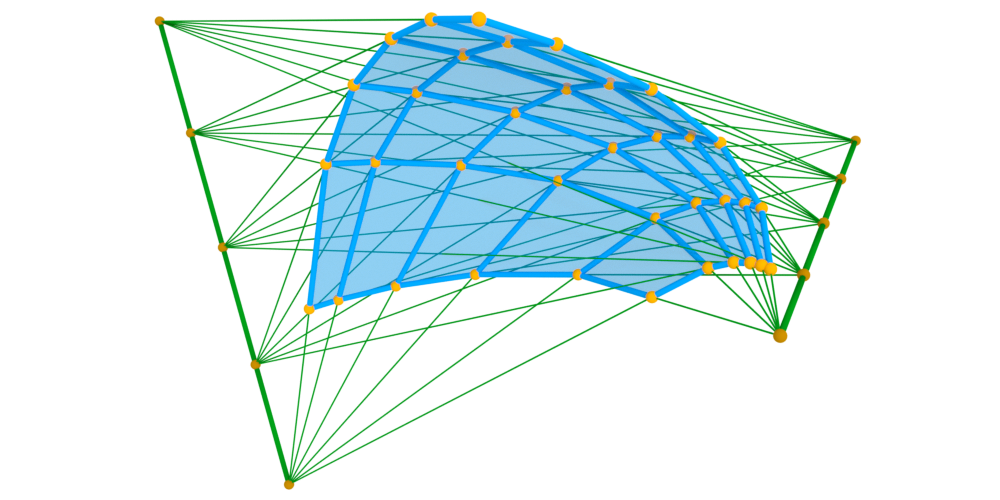}
  \caption{A $(Q+Q^*)$-net with its two Laplace transforms contained in skew lines.}
  \label{fig:qqstar-net}
\end{figure}

Smooth surfaces whose Laplace transforms are degenerate curves contained in a line are called $D$-surfaces and were studied in~\cite{Degen:1994:PlaneSilhouettes1,Degen:1997:PlaneSilhouettes2}.
In particular, parametrized surface patches $x: G \subset \R^2 \to \RP^3$ with the following two properties:
\begin{itemize}
\item[(A)]
  For every quadrangle $[u_1, u_2] \times [v_1, v_2] \subset G$ the four corners $x(u_i, v_j)$ are coplanar, and 
\item[(B)]
  for every quadrangle $[u_1, u_2] \times [v_1, v_2] \subset G$ the tangent planes at the four corners $x(u_i, v_j)$ are concurrent.
\end{itemize}
were classified in the following theorem.

\begin{theorem}\cite[Thm.~4]{Degen:1994:PlaneSilhouettes1}
  \label{thm:Degen}
  Nets with both properties (A) and (B) are conjugate and have plane silhouette lines. Furthermore, for both families of net curves, their planes belong to a pencil. The axes of these pencils are the loci of the vertices of the enveloping cones.
\end{theorem}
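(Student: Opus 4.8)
The plan is to obtain \Cref{thm:Degen} as the smooth limit of the discrete classification already established, by sampling the surface on a grid and letting the mesh size tend to zero. Concretely, for a partition of a coordinate rectangle in $G$ I would consider the discrete net $x\ind{i,j} := x(u_i,v_j)$ together with the net of tangent planes $P\ind{i,j} := T_{x(u_i,v_j)}x$, and read off the smooth conclusions from the discrete incidences as the grid is refined. The first step extracts conjugacy and the translation structure from property~(A): since every subrectangle has coplanar corners, each $x\ind{i,j}$ is a multi-Q-net, and the infinitesimal limit of coplanarity of the elementary quadrilateral forces $\hc x_{uv}\in\lspan{\hc x,\hc x_u,\hc x_v}$, i.e.\ the parametrization is conjugate. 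Applying the equivalence of \eqref{item:multi-q-net}, \eqref{item:degenerate-laplace} and \eqref{item:translation} of \Cref{thm:multiqnets} on each grid and passing to the limit, I would conclude that both Laplace transforms degenerate to curves and that there are homogeneous coordinates with $\hc x(u,v)=\hc p(u)+\hc q(v)$. In these coordinates $\laplace{u}x$ degenerates to the curve $[\hc p'(u)]$ (independent of $v$) and $\laplace{v}x$ to $[\hc q'(v)]$, so along each $v$-curve $x(u_0,\cdot)$ all $u$-tangents pass through the single point $[\hc p'(u_0)]$, and symmetrically for the $u$-curves. Thus both families consist of silhouette lines, and the ``conjugate'' clause is settled.

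Next I would feed property~(B) into the dual statement. On each grid the tangent-plane net $P\ind{i,j}$ is a multi-$Q^*$-net, since (B) asserts concurrence of the four corner tangent planes of every subrectangle. Hence \Cref{thm:coplanar_edges}, applied on each grid and passed to the limit, yields that both families of parameter curves lie in planes. Combined with the preceding paragraph, each family consists of \emph{plane} silhouette lines, which is the second clause of the theorem.

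Finally I would combine (A) and (B) at once: on every grid the surface is simultaneously a multi-Q-net and a multi-$Q^*$-net, so \Cref{thm:qqstarnets} applies. Its two conclusions — collinearity of the Laplace points and membership of the parameter-line planes in a pencil — are closed incidence conditions and therefore survive the limit. In the limit the degenerate Laplace transform $[\hc p'(u)]$, which is exactly the locus of the apices of the cones enveloped by the $u$-tangents along the $v$-curves, is forced onto a line, namely the axis of the pencil formed by the planes of the $u$-curves; the symmetric statement holds for the other family. This gives the pencil assertion together with the identification of the two axes as the loci of the vertices of the enveloping cones.

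The step I expect to be the main obstacle is the rigor of the discretize-and-pass-to-the-limit argument rather than any single computation: one must check that the genericity assumptions underlying \Cref{thm:multiqnets}, \Cref{thm:coplanar_edges} and \Cref{thm:qqstarnets} hold for the sampled nets on all sufficiently fine grids away from degenerate parameter values, and that the discrete conclusions, which are stated only for generic configurations, converge to the claimed smooth incidences. A cleaner but more laborious alternative would bypass sampling and argue directly from the Laplace equation, showing that (A) forces $a=b=c=0$ in suitable homogeneous coordinates (the translation form $\hc x=\hc p+\hc q$) and that (B) then forces $\hc p$ and $\hc q$ to be plane curves, whose spanning subspaces constitute the axes of the two pencils; this is essentially the route of the cited reference~\cite{Degen:1994:PlaneSilhouettes1}.
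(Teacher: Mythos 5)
The paper does not actually prove this statement: it is quoted from Degen's Theorem~4 and serves only as the smooth counterpart of \Cref{thm:qqstarnets}, so there is no internal proof to compare against. Your discretize-and-pass-to-the-limit argument is therefore a genuinely different route from the cited source, which works directly in the smooth category with the Laplace equation (property~(A) forces the translation normal form $\hc x=\hc p(u)+\hc q(v)$, and (B) then forces $\hc p$ and $\hc q$ to span low-dimensional subspaces whose projectivizations are the two pencil axes) --- the alternative you correctly sketch in your last paragraph. On the other hand, your route is exactly the strategy the paper itself uses for its other smooth classifications (\Cref{thm:multi-circular-smooth} and \Cref{thm:multi-conical-smooth} are both proved by sampling and invoking the discrete theorems), so it fits the paper's philosophy well: it buys you the fact that all the incidence work is already done in \Cref{thm:multiqnets}, \Cref{thm:coplanar_edges} and \Cref{thm:qqstarnets}, at the cost of the analytic care in the limit that you flag yourself. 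One genuinely nice feature of your version is that the sampled nets are multi-Q- and multi-$Q^*$-nets \emph{exactly}, for every grid, not merely approximately, so the discrete theorems apply without error terms.

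Two points in the limit argument deserve to be made explicit rather than waved at. First, the conclusions of \Cref{thm:qqstarnets} (a line of Laplace points, a pencil of planes) are a priori grid-dependent; you should pass through a nested sequence of refinements and use that three generic collinear points determine a line to conclude that all grids yield the \emph{same} line and pencil, and then observe that the discrete Laplace points $[\hc p(u_{i+1})-\hc p(u_i)]$ of the sampled translation net converge to the smooth Laplace transform $[\hc p'(u)]$, so the latter lies on that common line. Second, \Cref{thm:coplanar_edges} concerns the parameter polygons of the $Q^*$-net of tangent planes, whose vertices are the concurrence points of four tangent planes, not the sampled surface points $x(u_i,v_j)$; to transfer planarity to the net curves of $x$ you need either the (easy, but not free) observation that these concurrence points converge to the surface points as the rectangle shrinks, or the direct argument that the planes $Y^2\ind{i}$ through the parameter polygons of $x$ exist because all chords of a parameter polygon pass through the relevant Laplace points. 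With those two points filled in, the proposal is correct.
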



\section{Piecewise smooth extension by projective translation surfaces}
\label{sec:piecewise_smooth_extension}

Projective translation surface patches are the basis of our piecewise smooth extension.
These surfaces include but are not limited to Dupin cyclides and supercyclides.
Previously, we have used these two families of surfaces to construct piecewise smooth extensions of discrete nets (see~\cite{BobenkoHuhnenVenedey:Cyclides, BobHVRoer:Supercyclides}).

\begin{definition}
  A parametrized surface patch~$f:[0,1]^2 \to \RP^3$ is a \emph{projective translation surface} if there exist two curves $\hc p, \hc q: [0,1] \to \R^4$ such that $f(u,v) = [\hc p(u) + \hc q(v)]$.
\end{definition}

In other words, there exists a choice of homogeneous coordinates for a projective translation surface such that it is a euclidean translation surface in~$\R^4$.
The following theorem provides two equivalent characterizations of projective translation surfaces.

\begin{theorem}[see~\mbox{\cite[Sect.~2]{Degen:1994:PlaneSilhouettes1}}]
  Let $f: [0,1]^2 \to \RP^3$ be a parametrized surface patch which is not contained in a plane.
  Then the following are equivalent:
  \begin{enumerate}
  \item Every parameter rectangle of~$f$ is planar.
  \item The parameter lines of~$f$ are conjugate and the Laplace transforms degenerate to curves.
  \item The surface patch is part of a projective translation surface.
  \end{enumerate}
\end{theorem}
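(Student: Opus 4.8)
The plan is to prove the three-way equivalence for a smooth surface patch $f:[0,1]^2 \to \RP^3$ by treating it as the smooth limit of the discrete multi-$Q$-net theory developed in \Cref{thm:multiqnets}, while supplying the analytic content (convergence, differentiability) that the discrete statements take for granted. I would prove the cycle $(3) \Rightarrow (1) \Rightarrow (2) \Rightarrow (3)$, since each implication then exactly mirrors one arc of the discrete cycle and the hardest analytic step is isolated.

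First I would establish $(3) \Rightarrow (1)$, which is a direct computation. Writing $\hc f(u,v) = \hc p(u) + \hc q(v)$, any parameter rectangle has corners $\hc p(u_i) + \hc q(v_j)$ for $i,j \in \{1,2\}$. These four points satisfy the linear relation $[\hc p(u_1)+\hc q(v_1)] + [\hc p(u_2)+\hc q(v_2)] = [\hc p(u_1)+\hc q(v_2)] + [\hc p(u_2)+\hc q(v_1)]$, i.e.\ they are the vertices of a parallelogram in $\R^4$, hence affinely dependent, hence coplanar as projective points. This is precisely the smooth analog of the ``parallelogram'' consequence noted after \Cref{thm:multiqnets}, and it uses the hypothesis that $f$ is not contained in a plane only to guarantee the rectangle is genuinely two-dimensional.

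Next, $(1) \Rightarrow (2)$: assuming every parameter rectangle is planar, I would first take the infinitesimal rectangle $[u,u+\eps]\times[v,v+\eps]$ and let $\eps \to 0$ to obtain conjugacy. Coplanarity of the four corners for all $\eps$ gives, after expanding in Taylor series, the vanishing of the determinant $\det(\hc f, \hc f_u, \hc f_v, \hc f_{uv})$, which is exactly the Laplace equation $\hc f_{uv} = a\hc f_u + b\hc f_v + c\hc f$ for suitable functions $a,b,c$, so the parameter lines are conjugate. To see that the Laplace transforms degenerate to curves, I would use the \emph{finite} planarity: fixing $v_1,v_2$ and letting $u_1,u_2$ vary shows that the lines $\lspan{f(u,v_1),f(u,v_2)}$ all lie in a common plane, so the two $v$-curves are in perspective; letting $v_2\to v_1$ then forces all the $u$-tangents along the curve $f(\cdot,v_1)$ to pass through a single point. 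This is the smooth counterpart of the coincidence of Laplace points along a strip from the proof of \Cref{thm:multiqnets}\eqref{item:multi-q-net}$\Rightarrow$\eqref{item:all-perspective}, and it says exactly that $\laplace{u}f$ is a curve; the symmetric argument handles $\laplace{v}f$.

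Finally, $(2) \Rightarrow (3)$ is the arc where I expect the real work. Conjugacy gives the Laplace equation, and degeneracy of $\laplace{u}f = f_u - b f$ to a curve means $\laplace{u}f$ depends (projectively) only on $v$, and dually $\laplace{v}f$ only on $u$. The strategy is to rescale the homogeneous representative: I would seek a scalar function $\lambda(u,v)>0$ such that $\tilde{\hc f} = \lambda \hc f$ satisfies $\tilde{\hc f}_{uv} = 0$, which is the integrability condition guaranteeing a translation decomposition $\tilde{\hc f} = \hc p(u) + \hc q(v)$. Substituting $\tilde{\hc f} = \lambda\hc f$ into $\tilde{\hc f}_{uv}=0$ and using the Laplace equation reduces this to a scalar linear PDE for $\log\lambda$ of the form $(\log\lambda)_{uv} + \alpha(\log\lambda)_u + \beta(\log\lambda)_v + \gamma = 0$; the degeneracy hypotheses on the two Laplace transforms are what make the coefficients separate so that this equation is solvable (in fact the two degeneracy conditions force $c$ to be expressible through $a,b$ in a way that makes the mixed equation integrable). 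The main obstacle is verifying this solvability and the consistency of the normalization globally on $[0,1]^2$; I would either carry out the explicit integration of the Gauss--Weingarten-type system, or — since the excerpt permits invoking \Cref{thm:multiqnets} — argue by discretization: sample $f$ on a grid, observe that finite planarity of parameter rectangles is inherited in the limit, apply the discrete translation structure of \Cref{thm:multiqnets}\eqref{item:translation} to each grid, and pass to the limit using the smoothness of $f$ to recover continuous curves $\hc p,\hc q$. I would reference \cite[Sect.~2]{Degen:1994:PlaneSilhouettes1} for the classical analytic details of this integration step.
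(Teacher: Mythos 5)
The paper does not actually prove this statement --- it is quoted verbatim from Degen with a citation to \cite[Sect.~2]{Degen:1994:PlaneSilhouettes1} --- so there is no in-paper argument to match yours against; judged on its own, your cycle $(3)\Rightarrow(1)\Rightarrow(2)\Rightarrow(3)$ is sound and is precisely the smooth counterpart of the discrete proof of \Cref{thm:multiqnets}. The step you defer as ``the real work'' does close exactly as you hope, and no discretization detour or appeal to Degen is needed: writing the Laplace equation as $\hc{f}_{uv}=a\hc{f}_u+b\hc{f}_v+c\hc{f}$, one computes $\partial_v(\hc{f}_u-b\hc{f})=a(\hc{f}_u-b\hc{f})+(c-b_v+ab)\hc{f}$, so degeneracy of $\laplace{u}f$ forces $c=b_v-ab$, and symmetrically degeneracy of $\laplace{v}f$ forces $c=a_u-ab$; hence $a_u=b_v$, the $1$-form $b\,du+a\,dv$ is closed on the simply connected square, a potential $\psi$ with $\psi_u=b$, $\psi_v=a$ exists, and $\lambda=e^{-\psi}$ gives $(\lambda\hc{f})_{uv}=0$, i.e.\ $\lambda\hc{f}=\hc{p}(u)+\hc{q}(v)$. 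Two small repairs are needed in $(1)\Rightarrow(2)$. First, a family of pairwise intersecting lines either lies in a common plane or passes through a common point, and you need the latter (concurrence), which holds as soon as three of the chords $\lspan{f(u,v_1),f(u,v_2)}$ are non-coplanar --- guaranteed generically since $f$ is not contained in a plane, the same genericity assumption the discrete proof invokes. Second, the limit $v_2\to v_1$ of those chords produces the $v$-tangents along the $u$-curve $f(\cdot,v_1)$, so their concurrence is the degeneration of $\laplace{v}f$, not of $\laplace{u}f$; this is harmless, as the symmetric argument in the other variable handles the remaining transform.
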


\subsection{Adapted projective translation patch}
\label{sec:apat-proj-transl}

We define a projective translation surface patch adapted to a planar quadrilateral and derive the necessary initial data to determine it uniquely.

\begin{definition}
  \label{defn:adapted-patch}
  Let $x\ind{00}, x\ind{10}, x\ind{11}, x\ind{01} \in \RP^3$ be a planar quadrilateral in~$\RP^3$ and $f: [0,1]^2 \to \RP^3$ be a projective translation surface patch. Then the patch is \emph{adapted to the quadrilateral} if the four corners of the patch coincide with the corners of the quadrilateral, i.e.\
  \[
    f(i,j) = x\ind{ij} \text{, for $i,j \in \{0,1\}$.}
  \]
\end{definition}

\begin{figure}[t]
  \begin{overpic}[width=.45\linewidth]{./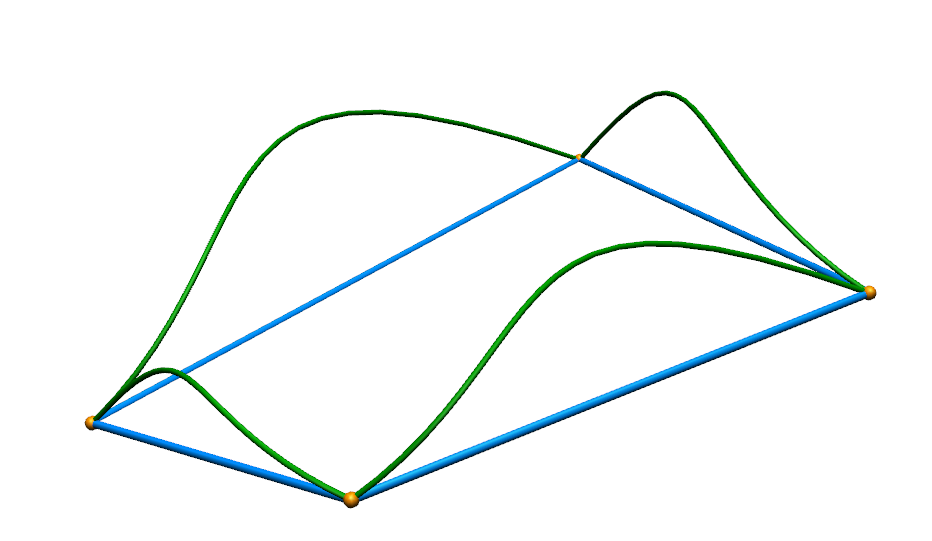}
    \put(27,14){$q_0(v)$}
    \put(80,43){$q_1(v)$}
    \put(60,23){$p_0(u)$}
    \put(15,45){$p_1(v)$}
  \end{overpic}
  \begin{overpic}[width=.45\linewidth]{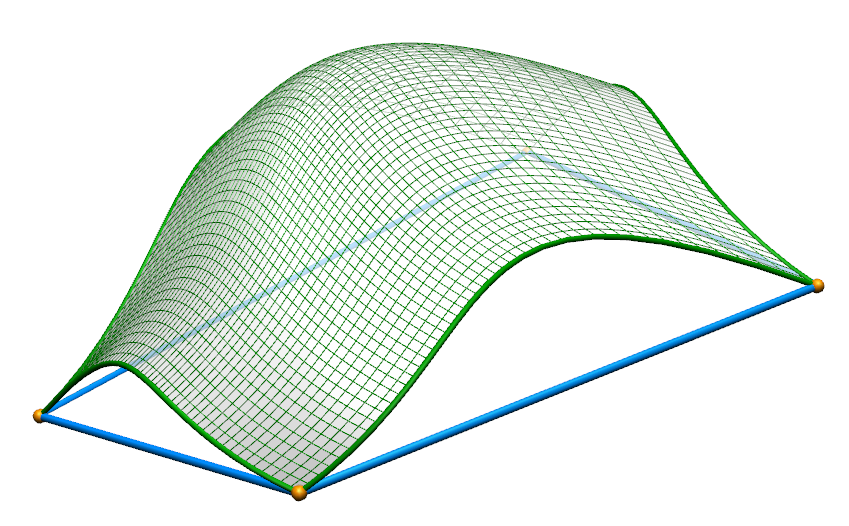}
    \put(35,35){\contour{white}{$f(u,v)$}}
  \end{overpic}
  \caption{Construction of an adapted projective translation surface patch based on two pairs of curves in perspective with respect to the Laplace points (left) and the resulting patch (right).}
  \label{fig:adapted_pt_patch}
\end{figure}

The following Lemma characterizes the necessary and sufficient data that uniquely determines an adapted projective translation surface patch.

\begin{lemma}
  \label{lem:unique-multi-Q-extension-4curves}
  Let $Q \subset \RP^3$ be a planar quadrilateral with Laplace points~$y^1$ and~$y^2$.
  Further let $p_0, p_1, q_0, q_1: [0,1] \to \RP^3$ be four curves with $p_j(i) = q_i(j) = x\ind{i,j}$ for $i,j\in\{0,1\}$ such that
  $p_0$ and $p_1$ ($q_0$ and $q_1$) are in perspective with respect to $y^2$ (resp.\ $y^1$).
  Then there exists a unique adapted projective translation surface patch $f: [0,1]^2 \to \RP^3$ such that
  \begin{itemize}
  \item $f(u,j) = p_j(u)$ for all $u \in [0,1]$ and $j \in \{0,1\}$,
  \item $f(i,v) = q_i(v)$ for all $v \in [0,1]$ and $i \in \{0,1\}$.
  \end{itemize}
\end{lemma}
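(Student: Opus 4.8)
The plan is to construct the patch by hand via explicit homogeneous lifts, and then to read off uniqueness from the rigidity of those lifts. First I would normalize the planar quadrilateral: recall from \Cref{sec:qnets} that a planar quadrilateral admits normalized homogeneous coordinates $\hc{x}\ind{ij}$ forming a parallelogram, so that $\hc{x}\ind{11} = \hc{x}\ind{10} + \hc{x}\ind{01} - \hc{x}\ind{00}$ and the canonical Laplace representatives are $\hc{y}^1 = \hc{x}\ind{10} - \hc{x}\ind{00} = \hc{x}\ind{11} - \hc{x}\ind{01}$ and $\hc{y}^2 = \hc{x}\ind{01} - \hc{x}\ind{00} = \hc{x}\ind{11} - \hc{x}\ind{10}$. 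Since a projective translation surface $[\hc p + \hc q]$ is invariant under $(\hc p, \hc q)\mapsto(\hc p + \hc c, \hc q - \hc c)$ and under a global rescaling, I fix the normalization $\hc q(0) = 0$ and $\hc p(0) = \hc{x}\ind{00}$. The four prescribed boundary conditions then read $[\hc p(u)] = p_0(u)$, $[\hc p(u)+\hc q(1)] = p_1(u)$, $[\hc{x}\ind{00}+\hc q(v)] = q_0(v)$, and $[\hc p(1)+\hc q(v)] = q_1(v)$.

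The engine of the construction is a lift-selection step. Because $p_0$ and $p_1$ are in perspective with respect to $y^2$, for each $u$ (generically, whenever $p_0(u)\neq p_1(u)$ and neither equals $y^2$) there is a unique scaling of a lift of $p_0(u)$, which I call $\hc p(u)$, such that $\hc p(u) + \hc{y}^2$ represents $p_1(u)$; the scaling factor depends smoothly on $u$, so this defines a curve $\hc p:[0,1]\to\R^4$. Setting $\hc q(1) := \hc{y}^2$ then gives $[\hc p(u)+\hc q(1)] = p_1(u)$ for all $u$. A symmetric argument, using the perspectivity of $q_0, q_1$ with respect to $y^1$ and the identity $\hc{x}\ind{10} = \hc{x}\ind{00} + \hc{y}^1$, produces a curve $\hc q:[0,1]\to\R^4$ with $[\hc{x}\ind{00}+\hc q(v)] = q_0(v)$ and $[\hc{x}\ind{10}+\hc q(v)] = q_1(v)$.

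The main obstacle is the consistency check: these two independently built curves must glue into a single translation surface $f(u,v) := [\hc p(u)+\hc q(v)]$, and in particular the Cauchy datum $\hc p(1)$ feeding the $\hc q$-construction must be the canonical representative $\hc{x}\ind{10}$, while $\hc q(0), \hc q(1)$ must match $0$ and $\hc{y}^2$. This works precisely because the perspective centers are the Laplace points of the quadrilateral and hence lie on its edges: since $y^2 \in \lspan{x\ind{00}, x\ind{01}}\cap\lspan{x\ind{10},x\ind{11}}$, evaluating the lift-selection at $u=0$ and $u=1$ and invoking the parallelogram identities $\hc{x}\ind{00}+\hc{y}^2 = \hc{x}\ind{01}$ and $\hc{x}\ind{10}+\hc{y}^2 = \hc{x}\ind{11}$ forces $\hc p(0) = \hc{x}\ind{00}$ and $\hc p(1) = \hc{x}\ind{10}$ exactly; symmetrically $\hc q(0) = 0$ and $\hc q(1) = \hc{y}^2$ come out compatible with the normalization. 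Hence all four boundary restrictions hold and the corners match, so $f$ is an adapted projective translation patch.

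For uniqueness I would take any adapted translation patch $f' = [\hc p' + \hc q']$ with the same boundary curves and normalize it the same way, so $\hc q'(0) = 0$ and $\hc p'(0) = \hc{x}\ind{00}$. The edge restrictions at the corners force $\hc p'(1) = \rho\,\hc{x}\ind{10}$ and $\hc q'(1) = \sigma\,\hc{x}\ind{01} - \hc{x}\ind{00}$ for scalars $\rho,\sigma$, and the far-corner condition $[\hc p'(1)+\hc q'(1)] = x\ind{11} = [\hc{x}\ind{10}+\hc{x}\ind{01}-\hc{x}\ind{00}]$ together with the linear independence of $\hc{x}\ind{00}, \hc{x}\ind{10}, \hc{x}\ind{01}$ in $\R^4$ pins down $\rho = \sigma = 1$. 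With $\hc p'(1) = \hc{x}\ind{10}$ and $\hc q'(1) = \hc{y}^2$ fixed, the same lift-selection uniqueness that produced $\hc p$ and $\hc q$ yields $\hc p' = \hc p$ and $\hc q' = \hc q$, whence $f' = f$. The only real subtlety throughout is the gluing in the third paragraph; once the Laplace-point geometry makes the corner data automatically compatible, everything else is bookkeeping.
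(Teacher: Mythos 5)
Your construction is essentially the paper's own proof: the ``lift-selection'' step is exactly the paper's normalization of homogeneous coordinates so that $\hc y^2 = \hc p_1(u)-\hc p_0(u)$ and $\hc y^1 = \hc q_1(v)-\hc q_0(v)$ are constant, and your $f(u,v)=[\hc p(u)+\hc q(v)]$ coincides with the paper's $f(u,v)=[\hc p_0(u)+\hc q_0(v)-\hc x\ind{00}]$. You simply spell out the corner-compatibility and uniqueness bookkeeping that the paper leaves implicit; the argument is correct.
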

\begin{proof}
  The two pairs of opposite curves fix a unique homogeneous coordinate for the entire projective translation surface in the following way:
  Since the points of opposite curves $p_0$ and $p_1$, resp. $q_0$ and $q_1$, are in perspective with respect to $y^2$ resp.\ $y^1$, we may choose coordinates in $\R^4$ such that:
  \begin{align*}
    \hc y^1
    & = \hc q_1(v) - \hc q_0(v)
      = \hc x\ind{11} - \hc x\ind{01}
      = \hc x\ind{10} - \hc x\ind{00}
      \text{, for all $v \in [0,1]$, and}\\
    \hc y^2
    & = \hc p_1(u) - \hc p_0(u)
      = \hc x\ind{11} - \hc x\ind{10}
      = \hc x\ind{01} - \hc x\ind{00}
      \text{, for all $u \in [0,1]$.}
  \end{align*}
  The unique projective translation surface patch is then given by
  \[f(u,v)=[\hc p_0(u) + \hc q_0(v) - \hc x\ind{00}]\,.\]
\end{proof}

\subsection{Continuous join of neighboring patches}
\label{sec:cont-join-neighb}

We will discuss the smoothness of two projective translation surface patches joined along a common parameter curve.
In our previous investigations of smooth extensions of discrete circular, asymptotic, and conjugate nets we considered tangent plane continuity.
In case of projective translation surface patches, the tangent planes are spanned by the surface point $f(u,v)$ and the two Laplace points $\laplace{u}f(u,v)$ and $\laplace{v}f(u,v)$.
The Laplace transforms of projective translation surfaces are curves, i.e., $\partial_v\laplace{u}f = \laplace{u}f$ and $\partial_u\laplace{v}f = \laplace{v}f$.
Furthermore, for a projective translation surface patch $f :[0,1] \to \RP^3$ with homogeneous coordinates $f(u,v) = [\hc p(u) + \hc q(v)]$ the Laplace transforms are given by
$\laplace{u}f(u,v) = [\partial_u \hc p(u)]$ and
$\laplace{v}f(u,v) = [\partial_v \hc q(v)]$.

\begin{proposition}
  \label{prop:smooth-join-two-patches}
  Let $f^{00}, f^{10} : [0,1]^2 \to \RP^3$ be two projective translation surface patches with a common coordinate curve $f^{00}(1,v) = f^{10}(0,v)$.
  Then the two surface patches have the same tangent planes along the common curve
  if and only if
  the two pairs of boundary curves $f^{00}(u,0)$ and $f^{10}(u,0)$ resp.\ $f^{00}(u,1)$ and $f^{10}(u,1)$ have a common tangent at $f^{00}(1,0) = f^{10}(0,0)$ resp.\ $f^{00}(1,1) = f^{10}(0,1)$.
\end{proposition}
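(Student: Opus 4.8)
The plan is to reduce both sides of the equivalence to a single condition on the two \emph{constant} Laplace points that the common curve carries for the two patches. Fix homogeneous representatives $f^{00}=[\hc p^{00}(u)+\hc q^{00}(v)]$ and $f^{10}=[\hc p^{10}(u)+\hc q^{10}(v)]$ and write $c(v)=f^{00}(1,v)=f^{10}(0,v)$ for the common curve. First I would record, using the description of the tangent plane recalled just before the proposition, that along $c$ the $u$-Laplace transform of each patch is a single fixed point: $Y^{00}:=\laplace{u}f^{00}(1,\cdot)=[\partial_u\hc p^{00}(1)]$ and $Y^{10}:=\laplace{u}f^{10}(0,\cdot)=[\partial_u\hc p^{10}(0)]$, both independent of $v$ because the Laplace transforms of a projective translation surface degenerate to curves. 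Since the remaining Laplace point $\laplace{v}f$ lies on the tangent line $L(v)$ of $c$ and $c(v)\in L(v)$, the tangent planes along the common curve are
\[
  T^{00}(v)=\lspan{L(v),Y^{00}},\qquad T^{10}(v)=\lspan{L(v),Y^{10}}.
\]

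The two tangent planes always share the line $L(v)$, so they agree for a given $v$ exactly when $Y^{00}$ and $Y^{10}$ determine the same plane of the pencil through $L(v)$, i.e.\ when $Y^{00}$, $Y^{10}$ and $L(v)$ are coplanar. I would then show that, under the standing general-position assumption, the surfaces agree along the whole of $c$ if and only if $Y^{00}=Y^{10}$: the implication ``$\Leftarrow$'' is immediate, and for ``$\Rightarrow$'' I note that if $Y^{00}\neq Y^{10}$ then coplanarity for every $v$ forces the fixed line $M:=\lspan{Y^{00},Y^{10}}$ to meet every tangent line of $c$, which (projecting $c$ from $M$) happens only if $c$ lies in a plane through $M$ --- the degenerate configuration excluded by genericity.

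It then remains to match $Y^{00}=Y^{10}$ with the boundary condition. The boundary curve $f^{00}(u,0)$ has tangent direction $\laplace{u}f^{00}(1,0)=Y^{00}$ at the corner $c(0)$, so its tangent line there is $\lspan{c(0),Y^{00}}$; likewise $f^{10}(u,0)$ has tangent line $\lspan{c(0),Y^{10}}$, and the analogous statement holds at $c(1)$ with the \emph{same} two points $Y^{00},Y^{10}$ (again because the $u$-Laplace point is constant along $c$). Hence a common tangent at both corners means that $c(0)$ and $c(1)$ are each collinear with $Y^{00}$ and $Y^{10}$; if $Y^{00}=Y^{10}$ this is automatic, and conversely two such collinearities place both corners on the line $\lspan{Y^{00},Y^{10}}$, which by general position is impossible unless $Y^{00}=Y^{10}$. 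Combining the two reductions yields the stated equivalence. I expect the middle step to be the main obstacle --- upgrading pointwise coplanarity to the clean condition $Y^{00}=Y^{10}$ --- since the equivalence genuinely fails for the non-generic family in which $c$ is planar and $M$ lies in its plane; making precise how the general-position hypothesis rules this family out is the crux of the argument.
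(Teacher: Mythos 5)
Your proof is correct and rests on the same mechanism as the paper's: the $u$-Laplace transform of each patch is a single fixed point along the common curve, the tangent plane there is spanned by that point together with the tangent line of the common curve, and the common corner tangents force the two fixed Laplace points to coincide. You are in fact more thorough than the paper, which only writes out the ``if'' direction and does not address the converse or the degenerate family (tangent planes constant along $c$) that you correctly identify as the place where general position must be invoked.
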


\begin{proof}
  As noted above, the tangent planes are the planes that contain the surface point and the two Laplace points.
  Since the Laplace transforms degenerate to curves for projective translation surfaces, all $u$-tangents (tangents in $u$-direction) along a $v$-curve intersect in a single Laplace point.
  This Laplace point is already determined by two of the $u$-tangents.
  Thus the Laplace point $\laplace{u}f^{00}(1,v) = \laplace{u}f^{01}(0,v)$ in $u$-direction of the two patches along the common $v$-curve coincide, since they are the intersection of the common $u$-tangents at $f^{00}(1,0) = f^{10}(0,0)$ and $f^{00}(1,1) = f^{10}(0,1)$.

  Since the $v$-tangents along the common $v$-curve also coincide we obtain that the tangent planes spanned by these tangents and the unique Laplace point~$\laplace{u}f^{00}(1,v) = \laplace{u}f^{01}(0,v)$ are equal for the two patches along their common curve.
\end{proof}

\subsection{Smooth extensions with planar parameter curves}
\label{sec:planar-parameter-curves}

We want to show that the extension of Q-nets with supercyclidic patches is a special case of the proposed extension with projective translation surface patches.
Hence we restrict to the family of surfaces with planar parameter curves.
We observe, that the Laplace transforms of projective translation surfaces with planar parameter curves not only degenerate to curves, but are contained in a line.
Furthermore, the supporting planes of the parameter lines of each family pass through the two lines containing the Laplace transforms.
Hence these surfaces are D-surfaces.
Moreover, if the Laplace transforms are parametrized by a quadratic rational function, then the resulting surface is a supercyclide.
In the discrete setting these are discrete $(Q+Q^*)$-nets characterized by~\Cref{thm:qqstarnets}. 
For these surfaces it is possible to formulate a Cauchy type problem for the piecewise smooth extension in the following way.

\paragraph{Cauchy problem for planar parameter curves}
Consider a Q-net $x: \Z^2 \to \RP^3$ with the following additional data:
\begin{enumerate}
\item
  A set of planes $P_x: \Z \to \{ \text{planes in $\RP^3$} \}$ with $(x\ind{0,j}, x\ind{1,j}) \subset P_x(j)$ for all $j \in \Z$.
\item
  A set of planes $P_y: \Z \to \{ \text{planes in $\RP^3$} \}$ with $(x\ind{i,0}, x\ind{i,1}) \subset P_y(i)$ for all $i\in\Z$.
\item
  A continuous differentiable curve $c_x: \R \to \RP^3$ such that $c_x(i) = x\ind{i,0}$ and $c_x([i,i+1])$ is planar. 
\item
  A continuous differentiable curve $c_y: \R \to \RP^3$ such that $c_y(j) = x\ind{0,j}$ and $c_y([j,j+1])$ is planar.
\end{enumerate}
\emph{Claim:} There exists a unique piecewise smooth extension with projective translation surface patches with planar parameter lines such that the surface contains the curves~$c_x$ and $c_y$ and the parameter lines at the edges lie in the prescribed planes~$P_x$ and~$P_y$.

\begin{proof}
  We will first show that the curves and the planes define two line congruences that will be the tangents to the boundary curves of the projective translational surface patches.
  Consider the quadrilateral based at $x\ind{00}$ with the initial data as shown in \Cref{fig:planar-curves-Cauchy-data}.
  Then we project the boundary curves $c_x$ and $c_y$ from the respective Laplace points onto the planes $P_x(1)$ and $P_y(1)$ to obtain the opposite boundary curves.
  Then by \Cref{lem:unique-multi-Q-extension-4curves} we obtain a unique adapted projective translational surface path bounded by these curves.
  The tangents of the projected curves at the vertex~$x\ind{11}$ define the planes at the next edges $x\ind{11}x\ind{21}$ and $x\ind{11}x\ind{12}$, respectively.
  Hence we can continue to project the curves $c_x$ and $c_y$ onto the newly defined planes.
  By construction, the tangents of consecutive pairs of boundary curves coincide along the coordinate polylines.
  Hence by \Cref{prop:smooth-join-two-patches} the tangent planes of neighboring patches coincide along common boundary curves.
\end{proof}

\begin{figure}[t]
  \centering
  \def\tikzscale{.4}
  \begin{tikzpicture}

\def\m{3}
\def\n{3}
\foreach \x in {0,1,...,\m} {
	\foreach \y in {0,1,...,\n} {
		\coordinate (x\x\y) at (2*\x,2*\y);
  }
}

\foreach \x in {0,1,...,\m} {
	\draw[myline] (x\x0)--(x\x\m);
	\draw[myline] (x0\x)--(x\n\x);
}

\foreach \x in {1,...,\m} {
	\node[shape=rectangle, fill=white, draw] (px\x) at ($(x\x0)!.5!(x\x1)$) {$P_y(\x)$};
}

\foreach \x in {1,...,\n} {
	\node[shape=rectangle, fill=white, draw] (py\x) at ($(x0\x)!.5!(x1\x)$) {$P_x(\x)$};
}

\draw[myline, blue] (x00) .. controls +(-.5,.5) and +(-.5,-.5) .. (x01);
\draw[myline, blue] (x01) .. controls +(.5,.5) and +(.5,-.5) .. (x02);
\draw[myline, blue] (x02) .. controls +(-.5,.5) and +(-.5,-.5) .. (x03);

\draw[myline, red!70] (x00) .. controls +(.5,.5) and +(-.5,.5) .. (x10);
\draw[myline, red!70] (x10) .. controls +(.5,-.5) and +(-.5,-.5) .. (x20);
\draw[myline, red!70] (x20) .. controls +(.5,.5) and +(-.5,.5) .. (x30);

\foreach \x in {0,1,...,\m} {
	\foreach \y in {0,1,...,\n} {
		 
		\path (x\x\y) [mypoint=white];
}}

\foreach \x in {0,1,...,\m} {
		\node[below=.2] at (x\x0) {$x_{\x0}$};
}
\foreach \y in {1,...,\m} {
		\node[left=.2] at (x0\y)  {$x_{0\y}$};
}

\end{tikzpicture}
  \caption{Cauchy data for the construction of a piecewise smooth extension of a Q-net by projective translation surface patches with planar parameter lines.}
  \label{fig:planar-curves-Cauchy-data}
\end{figure}
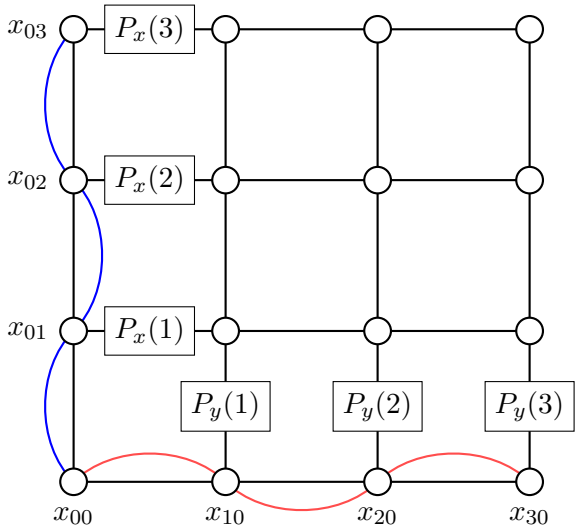

In previous work we constructed supercyclide patches adapted to Q-nets (see~\cite{BobHVRoer:Supercyclides}).
The data prescribed in case of supercyclides is equivalent to the prescription of the four curves of \Cref{lem:unique-multi-Q-extension-4curves}.

\begin{corollary}
  \label{cor:supercyclides}
  Let $Q \subset \RP^3$ be a planar quadrilateral with Laplace points~$y^1$ and~$y^2$.
  Further let $p_0, p_1, q_0, q_1: [0,1] \to \RP^3$ be four conic arcs with $p_j(i) = q_i(j) = x\ind{i,j}$ for $i,j\in\{0,1\}$ such that
  $p_0$ and $p_1$ ($q_0$ and $q_1$) are in perspective with respect to the Laplace points $y^2$ (resp.\ $y^1$).
  Then there exists a unique adapted supercyclide patch $f: [0,1]^2 \to \RP^3$
  \begin{itemize}
  \item $f(u,j) = p_j(u)$ for all $u \in [0,1]$ and $j \in \{0,1\}$,
  \item $f(i,v) = q_i(v)$ for all $v \in [0,1]$ and $i \in \{0,1\}$.
  \end{itemize}
\end{corollary}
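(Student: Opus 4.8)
The plan is to read the statement as the conic specialization of \Cref{lem:unique-multi-Q-extension-4curves}, so that the only genuinely new content is the identification of the resulting patch as a supercyclide. First I would apply \Cref{lem:unique-multi-Q-extension-4curves} verbatim: the four conic arcs are in particular curves meeting its hypotheses (they match the corners $x\ind{i,j}$, and the opposite pairs are in perspective with respect to $y^2$ resp.\ $y^1$), so there is a unique adapted projective translation surface patch $f:[0,1]^2\to\RP^3$ with $f(u,j)=p_j(u)$ and $f(i,v)=q_i(v)$. This already settles existence and uniqueness inside the class of projective translation surfaces; since a supercyclide is in particular such a surface, the uniqueness claim of the corollary will follow as soon as $f$ is shown to be a supercyclide.

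The substance is therefore to upgrade \emph{projective translation surface} to \emph{supercyclide}. As this is a property of the surface up to reparametrization, I may replace the given boundary parametrizations by quadratic representatives of the conics, say $\hc c(u)=\hc c_0+\hc c_1 u+\hc c_2 u^2$ for $p_0$, without changing the surface image. The perspective partner on $p_1$ is the intersection of the line $\lspan{y^2,p_0(u)}$ with the plane of $p_1$; writing that plane as a single linear condition, its representative is $\hc c(u)+t(u)\hc y^2$ with $t(u)$ a scalar \emph{quadratic} in $u$. The normalization $\hc p_1-\hc p_0=\hc y^2$ of the lemma then amounts exactly to rescaling by $1/t(u)$, so that $\hc p_0(u)=\hc c(u)/t(u)$ is quadratic rational. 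Two consequences identify $f$ as a supercyclide. Every generic $u$-parameter line is $[\hc p_0(u)+\hc k]=[\hc c(u)+t(u)\hc k]$, again a conic, and symmetrically for the $v$-lines, so both families of parameter curves are conics. Moreover $\laplace{u}f=[\partial_u\hc p_0]=[\hc c'(u)\,t(u)-\hc c(u)\,t'(u)]$, and a short computation shows that the cubic term cancels and that the three quadratic coefficient vectors are linearly dependent, so this traces a \emph{line} with a quadratic parametrization. Thus the Laplace transforms degenerate to curves on two lines and are parametrized by quadratic rational functions, which by the characterization recalled in \Cref{sec:planar-parameter-curves} (and matching the discrete $(Q+Q^*)$-picture of \Cref{thm:qqstarnets}) is precisely the supercyclide condition.

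The main obstacle is this last verification: that the perspectivity normalization raises the parametrization of the focal locus to degree exactly two while keeping it supported on a line, i.e.\ the simultaneous cancellation of the cubic term and the linear dependence of the quadratic coefficients of $\hc c'\,t-\hc c\,t'$. This is what genuinely distinguishes supercyclides from general projective translation surfaces built on conic boundaries, and it is where I would spend the calculation. Once it is in place, the equivalence — noted just before the corollary — between these four conic arcs and the data prescribing an adapted supercyclide in \cite{BobHVRoer:Supercyclides} confirms that $f$ is the unique such patch, completing the proof.
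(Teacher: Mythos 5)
Your proposal is correct, but it reaches the supercyclide identification by a genuinely different route than the paper. The paper's proof also starts by invoking \Cref{lem:unique-multi-Q-extension-4curves} for the unique adapted projective translation patch, but then it simply runs the \emph{construction of adapted SC-patches} from \cite[Sect.~4]{BobHVRoer:Supercyclides} on the same four conic arcs (the opposite pairs define two line congruences, the tangents at $x\ind{11}$ are forced, and the two axis conics determine a unique supercyclide patch whose opposite conics are perspective from the Laplace points), and concludes that this supercyclide coincides with the translation patch by the uniqueness in the lemma. You instead verify the supercyclide property intrinsically: normalizing $\hc p_0(u)=\hc c(u)/t(u)$ with $\hc c$ and $t$ quadratic, the computation you flag as the main obstacle does go through, since
$\hc c'\,t-\hc c\,t'=(\hc c_1t_0-\hc c_0t_1)+2(\hc c_2t_0-\hc c_0t_2)u+(\hc c_2t_1-\hc c_1t_2)u^2$
has vanishing cubic term and its three coefficient vectors satisfy the linear relation with coefficients $(t_2,-t_1,t_0)$, so $\laplace{u}f$ is a quadratically parametrized segment of a line, and likewise for $\laplace{v}f$. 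What each approach buys: the paper's argument is shorter and leans entirely on the previously established SC-patch construction; yours is computationally self-contained on the degeneration of the Laplace transforms, but still outsources the final step to the characterization recalled in \Cref{sec:planar-parameter-curves} (Laplace transforms on lines with quadratic rational parametrization imply supercyclide), which the paper states without proof. Either external ingredient is legitimate here, so both proofs are complete at the same level of rigor.
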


\begin{proof}
  For the given initial data there exists a unique projective translation surface patch.
  
  For the supercyclide patch we follow the \emph{construction of adapted SC-patches} in~\cite[Sect.~4]{BobHVRoer:Supercyclides}:
  The pairs of opposite conic arcs define the two line congruences.
  The two tangents at the vertex~$x\ind{11}$ are the unique lines that are defined by the tangents to the curves at the vertices of the coordinate axes.
  Then the two conic arcs on the coordinate axes define a unique supercyclide patch.
  In this patch, the opposite conic sections are in perspective with respect to the Laplace points.

  The supercyclide patch is a projective translation surface patch and by uniqueness of \Cref{lem:unique-multi-Q-extension-4curves} the projective translation surface patch and the supercyclide patch coincide.
\end{proof}

The global construction of supercyclidic nets from two line congruences and two prescribed piecewise conic curves is a special case of the construction presented here.


\section{Projective translation nets in quadrics}
\label{sec:trans_in_quadric}

Some special families of nets have an interpretation as special nets in quadrics.
Circular nets, for example, may also be considered as Q-nets in the M\"obius quadric in $\PMoebSpace$.
Hence multi-circular nets correspond to multi-Q-nets in the same quadric.
This motivates the investigation of multi-Q-nets, that is, projective translation nets in quadrics in general. 

Let $\quadric \subset \RP^n$ be a quadric given by a non-degenerate symmetric bilinear form $\bilform{\cdot}{\cdot}$ on~$\R^{n+1}$ and $\hc x\ind{i,j} = \hc{p}\ind{i} + \hc{q}\ind{j} \in \R^{n+1}$, $i,j \in \Z$ be a translation net.
The following proposition shows, that the Laplace transforms of a projective translation net in a quadric have to lie in orthogonal subspaces with respect to the bilinear form.
\begin{proposition}
  \label{prop:projective-translation-in-quadric}
  Let $x = [\hc p\ind{i} + \hc q\ind{j}]$ be a projective translation net in a quadric $\quadric$.
  Then for all $i,j \in \Z$ we have
  \[
  \bilform{\hc p\ind{i+1} - \hc p\ind{i}}{\hc q\ind{j+1} - \hc q\ind{j}} = 0
  \;.
  \]
  In other words, the Laplace transforms
  $\hc y^1\ind{i} = \hc p\ind{i+1} - \hc p\ind{i}$ and
  $\hc y^2\ind{j} = \hc q\ind{j+1} - \hc q\ind{j}$
  lie in polar subspaces.
\end{proposition}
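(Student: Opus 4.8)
The plan is to read the claim off directly from the defining condition of the quadric, reducing everything to the observation that the cross term $\bilform{\hc p\ind{i}}{\hc q\ind{j}}$ is additively separable in the two indices. Since the net $x = [\hc p\ind{i} + \hc q\ind{j}]$ lies in $\quadric$, each homogeneous representative is a null vector, i.e.
\[
  \bilform{\hc p\ind{i} + \hc q\ind{j}}{\hc p\ind{i} + \hc q\ind{j}} = 0
  \quad\text{for all } i,j \in \Z.
\]
First I would expand this by bilinearity and symmetry to obtain
\[
  2\,\bilform{\hc p\ind{i}}{\hc q\ind{j}}
  = -\bilform{\hc p\ind{i}}{\hc p\ind{i}} - \bilform{\hc q\ind{j}}{\hc q\ind{j}}.
\]
The crucial observation is that the right-hand side is a sum of a term depending only on $i$ and a term depending only on $j$; hence the mixed pairing $(i,j) \mapsto \bilform{\hc p\ind{i}}{\hc q\ind{j}}$ is a separable function of the two indices.

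Next I would expand the quantity to be shown by bilinearity:
\[
  \bilform{\hc p\ind{i+1} - \hc p\ind{i}}{\hc q\ind{j+1} - \hc q\ind{j}}
  = \bilform{\hc p\ind{i+1}}{\hc q\ind{j+1}}
  - \bilform{\hc p\ind{i+1}}{\hc q\ind{j}}
  - \bilform{\hc p\ind{i}}{\hc q\ind{j+1}}
  + \bilform{\hc p\ind{i}}{\hc q\ind{j}}.
\]
This is precisely the discrete mixed second difference of the function $\bilform{\hc p\ind{i}}{\hc q\ind{j}}$ in the two indices, and a separable function $f(i) + g(j)$ has vanishing mixed difference, since differencing in one variable annihilates the summand depending on the other. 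Substituting the separated expression from the previous step, all four terms cancel in pairs, which yields $\bilform{\hc y^1\ind{i}}{\hc y^2\ind{j}} = 0$ for all $i,j$ — exactly the statement that the spans of the two Laplace transforms are mutually polar with respect to $\bilform{\cdot}{\cdot}$.

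There is no serious obstacle here; the entire content is the elementary identity that the quadric equation forces the cross term to be additively separable, and such a function is killed by the mixed difference operator. The one point I would state explicitly for completeness is that ``lying in $\quadric$'' is a projective condition, but because the quadric is the zero locus of the form, the null condition $\bilform{\hc x}{\hc x} = 0$ is invariant under rescaling the homogeneous representative; no normalization of $\hc p\ind{i} + \hc q\ind{j}$ is therefore required and the computation is unaffected by the choice of scale.
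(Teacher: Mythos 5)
Your proof is correct and is essentially the paper's own argument: the paper writes the quadric condition at the four vertices of an elementary quadrilateral and takes the alternating linear combination, which is exactly your observation that the mixed second difference annihilates the additively separable cross term $\bilform{\hc p\ind{i}}{\hc q\ind{j}} = -\tfrac12\bigl(\bilform{\hc p\ind{i}}{\hc p\ind{i}} + \bilform{\hc q\ind{j}}{\hc q\ind{j}}\bigr)$. The separability framing is a slightly cleaner way to package the same computation.
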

\begin{proof}
  If the net~$x\ind{i,j}$ lies in the quadric $\quadric$ for all $i,j\in\Z$ then
  \begin{align*}
    0 =
    \bilform{\hc x\ind{i,j}}{\hc x\ind{i,j}} 
    = \bilform{\hc p\ind{i}}{\hc p\ind{i}} 
    + 2 \bilform{\hc p\ind{i}}{\hc q\ind{j}} 
    + \bilform{\hc q\ind{j}}{\hc q\ind{j}}
  \end{align*}
  If we consider an elementary quadrilateral with vertices $\hc x\ind{i,j}$, $\hc x\ind{i+1,j}$, $\hc x\ind{i+1,j+1}$, and $\hc x\ind{i,j+1}$ we obtain the following four equations:
  \begin{align*}
    \bilform{\hc p\ind{i}}{\hc p\ind{i}} 
    + 2 \bilform{\hc p\ind{i}}{\hc q\ind{j}} 
    + \bilform{\hc q\ind{j}}{\hc q\ind{j}}
    & = 0 \\
    \bilform{\hc p\ind{i+1}}{\hc p\ind{i+1}} 
    + 2 \bilform{\hc p\ind{i+1}}{\hc q\ind{j}} 
    + \bilform{\hc q\ind{j}}{\hc q\ind{j}}
    & = 0 \\
    \bilform{\hc p\ind{i+1}}{\hc p\ind{i+1}} 
    + 2 \bilform{\hc p\ind{i+1}}{\hc q\ind{j+1}} 
    + \bilform{\hc q\ind{j+1}}{\hc q\ind{j+1}}
    & = 0 \\
    \bilform{\hc p\ind{i}}{\hc p\ind{i}} 
    + 2 \bilform{\hc p\ind{i}}{\hc q\ind{j+1}} 
    + \bilform{\hc q\ind{j+1}}{\hc q\ind{j+1}}
    & = 0 
  \end{align*}
  Taking the appropriate linear combination of the four equations yields
  \begin{align*}
    \bilform{\hc p\ind{i}}{\hc q\ind{j}} 
    - \bilform{\hc p\ind{i+1}}{\hc q\ind{j}} 
    + \bilform{\hc p\ind{i+1}}{\hc q\ind{j+1}} 
    - \bilform{\hc p\ind{i}}{\hc q\ind{j+1}} & = 0 \\ \Leftrightarrow \quad
    \bilform{\hc p\ind{i}}{\hc q\ind{j} - \hc q\ind{j+1}}
    + \bilform{\hc p\ind{i+1}}{\hc q\ind{j+1} - \hc q\ind{j}} & = 0 \\ \Leftrightarrow \quad
    \bilform{\hc p\ind{i+1} - \hc p\ind{i}}{\hc q\ind{j+1} - \hc q\ind{j}} & = 0 \;.
  \end{align*}
\end{proof}

With the above proposition we can generate a projective translation net in a quadric from two polygons $\hc y^1\ind{i}$ and $\hc y^2\ind{j}$ in polar subspaces.
In the next subsection we describe translations and polar reflections with respect to a symmetric bilinear form.
This provides a way to generate multi-Q-nets by orthogonal transformations with respect to a given quadric.

\subsection{Translations and projective reflections}
\label{ssec:translations-and-reflections}

Another way to generate projective translation surfaces in quadrics is by particular sets of reflections instead of translations. This will help us understand the geometry of projective translation surfaces in quadrics.

Let $\quadric \subset \RP^n$ be a quadric given by a non-degenerate symmetric bilinear form $\bilform{\cdot}{\cdot}$ on~$\R^{n+1}$. Then for every point $n \not \in \quadric$ we define the reflection with respect to the point $n$ and the polar hyperplane of $n$ with respect to the quadric as follows:
\begin{align*}
\sigma_n : \R^{n+1} &\longto \R^{n+1} \\
  \hc x  &\longmapsto 
  \hc x - 2 \frac{\bilform{\hc x}{\hc n}}{\bilform{\hc n}{\hc n}} \hc n \,.
\end{align*}
This reflection acts in $\R^{n+1}$ as a translation on points with $\bilform{\hc x}{\hc n} = \mathtt{const}$. This allows us to generate a projective translation net in a quadric using reflections through the following Cauchy data.
\begin{theorem}
\label{thm:reflections-translation-net}
Let $n^1_i$, $i\in \Z$ and $n^2_j$, $j\in\Z$ be two polygons with $\bilform{\hc n^1_i}{\hc n^2_j} = 0$ for all $i,j \in \Z$ and $x\ind{00} \in \quadric$ an initial point. 
Then the two families of reflections $\sigma^1_i = \sigma_{n^1_i}$ and $\sigma^2_j = \sigma_{n^2_j}$ generate a projective translation net $x\ind{ij} \in \quadric$ with homogeneous coordinates 
\[
\hc x\ind{ij} = 
\left(
\prod_{k=0}^{i-1}{\sigma^1_k}\circ
\prod_{l=0}^{j-1}{\sigma^2_l}\right)
(\hc x\ind{00})\,.
\]
Furthermore, the Laplace transforms of a net defined by reflections are $y^1_i = [\hc n^1_i]$ and $y^2_j = [\hc n^2_j]$.
\end{theorem}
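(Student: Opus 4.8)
The plan is to verify that the points $\hc x\ind{ij}$ defined by the composition of reflections genuinely form a projective translation net lying in $\quadric$, and then to read off the Laplace transforms. I would organize this around three observations: that each $\sigma_n$ is an involution fixing $\quadric$, that the two families $\{\sigma^1_i\}$ and $\{\sigma^2_j\}$ commute because of the polarity condition $\bilform{\hc n^1_i}{\hc n^2_j}=0$, and that applying a single reflection $\sigma_n$ to a point on $\quadric$ produces a displacement by a multiple of $\hc n$.

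First I would check that $\sigma_n$ preserves the quadric. A direct computation gives $\bilform{\sigma_n(\hc x)}{\sigma_n(\hc x)} = \bilform{\hc x}{\hc x}$ for all $\hc x$, so $\sigma_n$ is an isometry of the bilinear form and in particular maps $\quadric$ to itself; hence $\hc x\ind{ij}\in\quadric$ for all $i,j$ by induction, starting from $\hc x\ind{00}\in\quadric$. I would also note that $\sigma_n$ is an involution, which is not strictly needed but clarifies the structure.

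The central step is to prove that the reflections in the two families commute, so that the product in the statement is well defined independently of the order of composition and the net has a genuine two-parameter grid structure. For two reflections $\sigma_m,\sigma_n$ one computes that $\sigma_m\circ\sigma_n = \sigma_n\circ\sigma_m$ precisely when $\bilform{\hc m}{\hc n}=0$; this is where the hypothesis $\bilform{\hc n^1_i}{\hc n^2_j}=0$ enters decisively, guaranteeing that every $\sigma^1_i$ commutes with every $\sigma^2_j$. With commutativity in hand, I can rewrite $\hc x\ind{ij}$ by pulling all the first-direction reflections through the second-direction ones, which shows that moving in the $i$-direction and moving in the $j$-direction are independent operations acting on $\hc x\ind{00}$.

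Finally, to identify the net as a projective translation net and extract the Laplace transforms, I would analyze the effect of a single reflection on a point of $\quadric$. Since a point $\hc x$ on the quadric at each grid step satisfies a fixed value of $\bilform{\hc x}{\hc n}$ along the relevant direction, the reflection $\sigma_n(\hc x) = \hc x - 2\tfrac{\bilform{\hc x}{\hc n}}{\bilform{\hc n}{\hc n}}\hc n$ acts as a translation by a vector proportional to $\hc n$. Tracking this across the grid yields $\hc x\ind{i+1,j}-\hc x\ind{i,j}$ proportional to $\hc n^1_i$ (independent of $j$) and $\hc x\ind{i,j+1}-\hc x\ind{i,j}$ proportional to $\hc n^2_j$ (independent of $i$), so after rescaling the homogeneous coordinates appropriately we obtain the separated form $\hc x\ind{ij}=\hc p\ind{i}+\hc q\ind{j}$, exhibiting $x$ as a projective translation net in the sense of \Cref{thm:multiqnets}~\eqref{item:translation}, and the displacement vectors are exactly the Laplace transforms $\hc y^1\ind{i}=\hc p\ind{i+1}-\hc p\ind{i}\sim\hc n^1_i$ and $\hc y^2\ind{j}\sim\hc n^2_j$, matching \Cref{prop:projective-translation-in-quadric}. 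The main obstacle I anticipate is the bookkeeping in this last step: the quantity $\bilform{\hc x}{\hc n}$ and the homogeneous scale both shift as one moves along the grid, so care is needed to confirm that the per-step displacements are genuinely constant multiples of the $\hc n$'s (equivalently, that one can consistently normalize the homogeneous representatives so the translation structure is exact rather than only projective), and it is precisely the polarity condition that makes this normalization consistent across both directions.
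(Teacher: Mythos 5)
Your proposal is correct and follows essentially the same route as the paper's proof: invariance of $\quadric$ under the reflections, commutativity of the two families via $\bilform{\hc n^1_i}{\hc n^2_j}=0$, and the observation that this same polarity condition makes the per-step displacement $-2\tfrac{\bilform{\hc x}{\hc n^1_i}}{\bilform{\hc n^1_i}{\hc n^1_i}}\hc n^1_i$ independent of the position in the other direction, so the net is a translation net with Laplace transforms $[\hc n^1_i]$, $[\hc n^2_j]$. The normalization worry you flag at the end is resolved exactly as you suspect, and the paper likewise concludes by appealing to \Cref{thm:multiqnets}.
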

\begin{proof}
  First note, that all the points of the net lie on the quadric, because all the reflections map the quadric onto intself.
  Furthermore, the reflections of the two families $\sigma^1_i$ and $\sigma^2_j$ commute as the respective points are in orthogonal subspaces and satisfy $\bilform{\hc n^1_i}{\hc n^2_j} = 0$. 
  For the initial quadrilateral we obtain
  \begin{align*}
    \hc x\ind{10} 
    & = \sigma^1_0 (\hc x\ind{00}) = 
      \hc x\ind{00} 
      - 2 
      \frac{\bilform{\hc x\ind{00}}{\hc n^1_0}}{\bilform{\hc n^1_0}{\hc n^1_0}} \hc n^1_0, \qquad
    \hc x\ind{01}
      = \sigma^2_0 (\hc x\ind{00}) = 
      \hc x\ind{00} 
      - 2 
      \frac{\bilform{\hc x\ind{00}}{\hc n^2_0}}
      {\bilform{\hc n^2_0}{\hc n^2_0}} \hc n^2_0
    \\
    \hc x\ind{11} 
    & = \sigma^1_i \circ \sigma^2_1(\hc x\ind{00})
      = \sigma^1_i (\hc x\ind{01})
      = 
      \hc x\ind{00} 
      - 2 
      \frac{\bilform{\hc x\ind{00}}{\hc n^2_0}}
      {\bilform{\hc n^2_0}{\hc n^2_0}} \hc n^2_0
      - 2
      \frac{\bilform{\hc x\ind{00}}{\hc n^1_0}}
      {\bilform{\hc n^1_0}{\hc n^1_0}} \hc n^1_0
      \,.
  \end{align*}
  We observe, that the translations from $x\ind{00} \to x\ind{10}$ and from $x\ind{01} \to x\ind{11}$ are the same, since $\bilform{\hc n^1_0}{\hc n^2_0} = 0$. But this is true for the entire net as $\bilform{\hc n^1_i}{\hc n^2_j} = 0$ for all $i,j \in \Z$. 

  The Laplace transforms are the polygons given by
  \begin{align*}
    \hc y^1_i = - 2
    \frac{\bilform{\hc x\ind{i0}}{\hc n^1_0}}
    {\bilform{\hc n^1_0}{\hc n^1_0}} \hc n^1_0 \qquad \text{and} \qquad
    \hc y^2_j = - 2 
    \frac{\bilform{\hc x\ind{0j}}{\hc n^2_0}}
    {\bilform{\hc n^2_0}{\hc n^2_0}} \hc n^2_0
    \, .
  \end{align*}
  and hence by~\Cref{thm:multiqnets} the net is a projective translation net. 
\end{proof}


\section{Circular nets}
\label{sec:circularnets}

Circular nets are another important family of nets.
They are discretizations of surfaces parametrized along principal curvature directions.

In the following we will describe nets in~$\Rinfty$. Generically, the nets will not have vertices at infinity.
But for some proofs it is very convenient to map a vertex to infinity and the circles passing through this vertex to lines.
A more homogeneous description can be achieved by using~$\Sphere^3$ instead.

\begin{definition}
  \label{defn:circular-net}
  A 2-dimensional \emph{circular net} is a map $x: \Z^2 \to \Rinfty$ such that the vertices $x\ind{i,j}, x\ind{i+1,j}, x\ind{i+1,j+1}, x\ind{i,j+1}$ lie on a circle.
\end{definition}

\begin{remark*}
  As our investigations are motivated by differential geometry we will only consider \emph{embedded} circular quadrilaterals.
  In the non-embedded case, there does not exist a smooth limit, which is of central importance in discrete differential geometry.
  As shown in~\cite{BobenkoMatthesSuris:approximation}, the smooth limits of embedded circular nets are surfaces parametrized along principal curvature lines.
  This analogy is apparent comparing Theorems~\ref{thm:circular} and~\ref{thm:multi-circular-smooth}.
\end{remark*}

As in the case of $Q$- and $Q^*$-nets we define multi-circular nets in the following way.

\begin{definition}
  \label{defn:multi-circular-net}
  A 2-dimensional \emph{multi-circular net} is a map $x: \Z^2 \to \Rinfty$ such that for every $i_0 \neq i_1$ and $j_0 \neq j_1$ the coordinate quadrilaterals $x\ind{i_0, j_0}, x\ind{i_0, j_1}, x\ind{i_1, j_1}, x\ind{i_1, j_0}$ are circular.
\end{definition}

An example of a multi-circular net is a discrete rotational surface shown in \Cref{fig:multi-circular-net}.
The circularity of quadrilaterals is preserved by M\"obius transformations.
Hence inverting a rotational surface in a sphere yields another multi-circular net.
As we will show in \Cref{thm:circular}, this is already the most general kind of multi-circular net.
In particular, Dupin cyclides sampled along the principal curvature lines yield multi-circular nets.

\begin{figure}[tb]
  \centering
  \includegraphics[width=.5\linewidth]{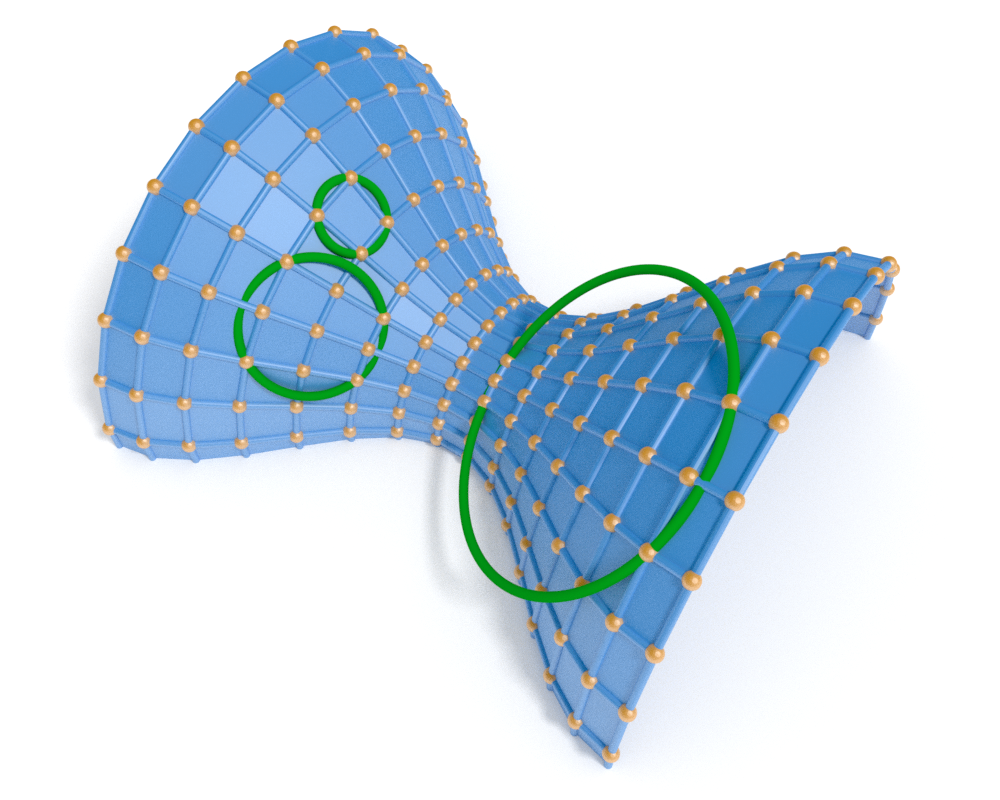}
  \begin{tikzpicture}[scale=\tikzscale]

\begin{scope}
  \begin{axis}[hide axis, view = {-10}{10}] 

\addplot3 [data cs=polar, domain=0:360, samples=80, y domain=0:0, thick]
		(x, 2, 2);
\addplot3 [data cs=polar, domain=0:360, samples=80, y domain=0:0, thick, blue]
		(x, 1, 1);
\addplot3 [data cs=polar, domain=0:360, samples=80, y domain=0:0, thick]
(x, 1, -1);
\edef\ang{-17}

\addplot3 [
domain=0:1,
samples=10,
y domain=0:0,
thick]
({-cos(\ang)*(1-x) + 2*x*cos(\ang)},
{-sin(\ang)*(1-x)+2*x*sin(\ang)},
{-1.0*(1-x)+2*x});

\edef\ang{180}

\addplot3 [
domain=0:1,
samples=10,
y domain=0:0,
thick]
({-cos(\ang)*(1-x) + 2*x*cos(\ang)},
{-sin(\ang)*(1-x)+2*x*sin(\ang)},
{-1.0*(1-x)+2*x});

\addplot3 [domain=-{sqrt(3)}:{sqrt(3)}, samples=80, y domain=0:0, thick, blue!50!red]
		({-0.5*(1-x^2)}, x, {0.5*(1+x^2)});

 \end{axis}
\end{scope}

\end{tikzpicture}

  \caption{
    Left: A rotational net is a multi-circular net.
    Right: The space~$\PMoebSpace$ is used to describe the projective model of M\"obius geometry. It naturally contains the elementary~$\Rinfty$ (paraboloid) and spherical~$\Sphere^3$ model. The figure shows the respective models in~$\R^{2,1}$.
  }
  \label{fig:multi-circular-net}
\end{figure}

In the following we present two perspectives on circular nets: 
In Section~\ref{ssec:circular_elementary} we describe some elementary properties of multi-circular nets in Euclidean 3-space.
In the special case of the M\"obius quadric in~$\PMoebSpace$, circular and multi-circular nets are Q- resp.\ multi-Q-nets in a quadric in the sense of the previous Section~\ref{sec:trans_in_quadric}.
This yields a projective point of view on multi-circular nets presented in Section~\ref{ssec:circular-nets-as-q-nets}.

\subsection{Elementary properties}
\label{ssec:circular_elementary}

As stated in the Remark at the beginning of \Cref{sec:circularnets} we are interested in embedded circular nets only.
Whether or not a circular quadrilateral is embedded is invariant with respect to M\"obius transformations, in particular, sphere inversions. 
The following Lemma assures that the ``large'' circles of a multi-circular net are embedded as well if no two adjacent circular quadrilaterals have the same circumcircle, which we always assume without explicitly stating it.

\begin{lemma}
  \label{lem:embedded-multi-circular}
  Let $x: \Z^2 \to \Rinfty$ be a multi-circular net. Then all parameter quadrilaterals are embedded.
\end{lemma}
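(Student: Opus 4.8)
The plan is to reduce the statement to a one-dimensional ``strip lemma'' and to exploit that embeddedness of a circular quadrilateral is a M\"obius invariant, so that sphere inversions may be applied freely. Since every parameter quadrilateral of a multi-circular net is itself circular, its embeddedness depends only on the cyclic order of its four vertices on their common circumcircle; concretely, the quadrilateral $x\ind{i_0,j_0}, x\ind{i_0,j_1}, x\ind{i_1,j_1}, x\ind{i_1,j_0}$ is embedded if and only if the ``diagonal'' pair $\{x\ind{i_0,j_0}, x\ind{i_1,j_1}\}$ separates the pair $\{x\ind{i_0,j_1}, x\ind{i_1,j_0}\}$ on that circle.

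The heart is the following \emph{strip lemma}: given two sequences of points $a_k, b_k$ in $\Rinfty$ such that every quadrilateral $a_k a_{k'} b_{k'} b_k$ is circular and every \emph{consecutive} quadrilateral $a_k a_{k+1} b_{k+1} b_k$ is embedded, then all quadrilaterals $a_k a_{k'} b_{k'} b_k$ are embedded. I would prove it by inverting in a sphere centred at one base vertex $a_{k_0}$. The circles of all quadrilaterals through $a_{k_0}$ then become lines $\ell_k$, and since these quadrilaterals also share $b_{k_0}$, every $\ell_k$ passes through the common image point $B_0$ of $b_{k_0}$ and carries the images $A_k, B_k$ of $a_k, b_k$. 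Circularity of the quadrilaterals $a_k a_{k'} b_{k'} b_k$ for $k,k' \neq k_0$ translates, via the power of the point $B_0$, into the statement that $\overline{B_0 A_k}\cdot\overline{B_0 B_k}$ is a constant $\rho$ independent of $k$. Embeddedness of the single consecutive quadrilateral through $a_{k_0}$ forces $\rho > 0$, so there is a well-defined circle $\omega$ of radius $\sqrt{\rho}$ about $B_0$, and the inversion in $\omega$ is precisely the map swapping $A_k \leftrightarrow B_k$.

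The key step is then to show that a quadrilateral $a_k a_{k'} b_{k'} b_k$ is embedded if and only if $A_k$ and $A_{k'}$ lie on the same side of $\omega$ (both inside or both outside, with the image $\infty$ of $a_{k_0}$ counting as outside). Once embeddedness is reduced to a single two-valued invariant $\varepsilon_k \in \{\mathrm{in}, \mathrm{out}\}$ attached to each index, the conclusion is immediate: the hypothesis that consecutive quadrilaterals are embedded says $\varepsilon_k = \varepsilon_{k+1}$ for all $k$, hence all $\varepsilon_k$ coincide, hence every quadrilateral is embedded. I expect this equivalence --- converting the combinatorial crossing condition into the geometric ``same side of $\omega$'' condition --- to be the main obstacle; it can be verified by a cross-ratio computation for the four concircular points $A_k = s_k u_k$, $B_k = (\rho/s_k)u_k$, whose sign is governed by $(s_k - \sqrt{\rho})(s_{k'} - \sqrt{\rho})$.

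Finally I would apply the strip lemma twice. First, fixing two adjacent rows $j_1 = j_0 + 1$ and letting the index $k = i$ vary: the consecutive quadrilaterals are the elementary quadrilaterals of the net, embedded by assumption, so every rectangle $x\ind{i_0,j_0}, x\ind{i_0,j_0+1}, x\ind{i_1,j_0+1}, x\ind{i_1,j_0}$ with arbitrary $i_0, i_1$ is embedded. Second, fixing arbitrary columns $i_0, i_1$ and letting $k = j$ vary: the consecutive quadrilaterals are exactly the rectangles just shown to be embedded, so the strip lemma yields embeddedness of every parameter rectangle $x\ind{i_0,j_0}, x\ind{i_0,j_1}, x\ind{i_1,j_1}, x\ind{i_1,j_0}$. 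The general-position hypothesis that no two adjacent circular quadrilaterals share a circumcircle is what guarantees that the inversions are non-degenerate and the lines $\ell_k$ are distinct.
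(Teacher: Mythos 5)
Your proposal is correct, and at bottom it uses the same strategy as the paper: Möbius invariance of embeddedness, an inversion centred at a vertex to turn the relevant circumcircles into lines, a reading-off of embeddedness from the ordering of points on those lines, and propagation along coordinate strips in two stages. The difference is in how the induction is organized. The paper's proof is only a sketch: it inverts at the common corner $x\ind{i+1,j+1}$ of two adjacent elementary quadrilaterals so that \emph{both} circumcircles become lines, concludes ``from the ordering of points'' that the combined $1\times 2$ rectangle is embedded, and then asserts that the general case ``follows by induction,'' merging adjacent rectangles one at a time. Your strip lemma replaces this pairwise merging with a global argument: the constancy of the power $\overline{B_0A_k}\cdot\overline{B_0B_k}=\rho$ produces the circle $\omega$ whose inversion swaps the two bounding polylines of the strip, and embeddedness of an arbitrary quadrilateral in the strip reduces to the single two-valued invariant $\varepsilon_k$ (inside/outside $\omega$), so all pairs $k,k'$ are handled simultaneously once one consecutive quadrilateral forces $\rho>0$. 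The ``same side of $\omega$'' equivalence you flag as the main obstacle does hold, and for a cleaner reason than the cross-ratio computation: the circumcircle of $A_k,B_k,A_{k'},B_{k'}$ is invariant under inversion in $\omega$, hence orthogonal to it, and that inversion acts on it as an orientation-reversing involution exchanging the arcs inside and outside $\omega$; the stated cyclic-order criterion then falls out. Note also that your $\omega$ is precisely the image, under the normalizing inversion, of the strip's orthogonal sphere of \Cref{lem:strip-sphere}, so your argument makes explicit the role that sphere plays in embeddedness. In short: same approach, but your version supplies the induction and the ordering argument that the paper leaves implicit.
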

\begin{proof}
  Consider two neighboring circular quadrilaterals of a multi-circular net as shown in \Cref{fig:orthogonal-spheres} (left).
  An inversion in a sphere centered at $x\ind{i+1,j+1}$ maps the two circumcircles onto two straight lines.
  From the ordering of points we can deduce that the circular quadrilateral $x\ind{i,j}, x\ind{i+1,j}, x\ind{i+1,j+2}, x\ind{i,j+2}$ is embedded.

  Now it follows by induction that all parameter quadrilaterals are embedded.
\end{proof}

Multi-circular nets are special multi-$Q$-nets, so all edges across a quadrilateral strip intersect in the unique Laplace point.
In the circular case, there exists an additional role of the Laplace points.
Consider spheres centered at the Laplace points and orthogonal to the circumcircle of a quadrilateral.
The inversions in these spheres map the respective opposite edges of the circular quadrilaterals onto each other (and the circumcircle onto itself), see \Cref{fig:orthogonal-spheres}. 
Since the Laplace points coincide along coordinate strips we obtain the following Lemma, see \Cref{fig:orthogonal-spheres} (left).

\begin{lemma}
  \label{lem:strip-sphere}
  For a generic strip of a multi-circular net, there exists a sphere orthogonal to all circles of the strip.
  The inversion in this sphere maps the two bounding parameter polylines onto each other.
\end{lemma}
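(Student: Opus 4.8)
The plan is to reduce the whole statement to one fact about the \emph{power of a point} and then to propagate that power along the strip. Throughout I would work in $\Rinfty$ and use that a multi-circular net is in particular a multi-$Q$-net, so that (as already established in the paragraph preceding the lemma and in \Cref{thm:multiqnets}) all the edges $\lspan{x\ind{i,j}, x\ind{i+1,j}}$ crossing the strip meet in a single common Laplace point $y^1\ind{i}$, independent of $j$. For a fixed quadrilateral $x\ind{i,j}, x\ind{i+1,j}, x\ind{i+1,j+1}, x\ind{i,j+1}$, this point is the intersection of the two chords $\lspan{x\ind{i,j}, x\ind{i+1,j}}$ and $\lspan{x\ind{i,j+1}, x\ind{i+1,j+1}}$ of its circumcircle $C_j$, hence $y^1\ind{i}$ lies in the plane of $C_j$.

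The key observation is the characterization of sphere--circle orthogonality by the power of a point. A sphere centered at $y^1\ind{i}$ of radius $r$ is orthogonal to a circle $C$ lying in a plane through $y^1\ind{i}$ exactly when $r^2 = \operatorname{pow}(y^1\ind{i}, C) := |y^1\ind{i} - O|^2 - R^2$, where $O, R$ are the center and radius of $C$; equivalently, inversion in that sphere fixes $C$ and swaps the two endpoints of any chord of $C$ through the center of the sphere. (At an intersection point the sphere's normal and the circle's tangent both lie in the plane of $C$, so orthogonality reduces to the planar condition $r^2=\operatorname{pow}$.) First I would record that this handles a \emph{single} quadrilateral, reproving the remark before the lemma.

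The new content is that $\operatorname{pow}(y^1\ind{i}, \cdot)$ is the \emph{same} for every circumcircle of the strip, which is what allows one sphere to serve all circles. Indeed, the shared edge $\lspan{x\ind{i,j+1}, x\ind{i+1,j+1}}$ is a common chord of the circumcircle $C_j$ of the $j$-th quadrilateral and the circumcircle $C_{j+1}$ of the $(j+1)$-th quadrilateral. The power of $y^1\ind{i}$ computed along this chord is the signed product $\overline{y^1\ind{i}\,x\ind{i,j+1}}\cdot\overline{y^1\ind{i}\,x\ind{i+1,j+1}}$, which is simultaneously $\operatorname{pow}(y^1\ind{i}, C_j)$ and $\operatorname{pow}(y^1\ind{i}, C_{j+1})$. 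Hence consecutive powers coincide, and by induction along the strip all powers agree. Consequently there is a single radius $r$, with $r^2$ equal to this common power, for which the sphere $S$ centered at $y^1\ind{i}$ is orthogonal to every circumcircle of the strip; this proves the first assertion. For the second, inversion in $S$ maps each $C_j$ to itself and swaps the endpoints of the chord $\lspan{x\ind{i,j}, x\ind{i+1,j}}$, i.e.\ $x\ind{i,j} \leftrightarrow x\ind{i+1,j}$ for every $j$, so it carries the polyline $x(i,\cdot)$ onto $x(i+1,\cdot)$.

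The main obstacle I expect is the reality/sign issue: the argument produces a genuine real orthogonal sphere only when the common power is positive, i.e.\ when $y^1\ind{i}$ lies outside the circumcircles so that the chords through it are honest secants with both endpoints on the same side. I would argue that this is ensured by the embeddedness from \Cref{lem:embedded-multi-circular} together with the standing genericity assumption (in particular that no two adjacent quadrilaterals share a circumcircle, which keeps the configuration nondegenerate). A tidy way to avoid the coordinate bookkeeping altogether is to pass to $\Sphere^3$ or the M\"obius quadric in $\PMoebSpace$, where ``single orthogonal sphere'' becomes ``single polar hyperplane'' for the common Laplace point; but the power-of-a-point computation above already delivers the lemma directly in $\Rinfty$.
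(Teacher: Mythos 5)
Your proof is correct and takes essentially the same route as the paper, which obtains the lemma by combining the coincidence of Laplace points along a strip (from the multi-Q-net property) with the single-quadrilateral observation that the sphere centered at the Laplace point and orthogonal to the circumcircle swaps opposite edges. You actually go one step beyond the paper's exposition: the paper never spells out why the orthogonal spheres of consecutive quadrilaterals have the same radius and not merely the same center, and your common-chord/power-of-a-point propagation (equivalently: both inversions are centered at $y^1\ind{i}$ and send $x\ind{i,j+1}$ to $x\ind{i+1,j+1}$, hence coincide) supplies exactly that implicit step, together with the correct appeal to embeddedness for positivity of the power.
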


Looking at a single quadrilateral we further observe that the spheres of distinct coordinate directions are orthogonal, see \Cref{fig:orthogonal-spheres} (right).

\begin{lemma}
  \label{lem:ortho-spheres}
  The orthogonal spheres of distinct coordinate directions are orthogonal to each other.
\end{lemma}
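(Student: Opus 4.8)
The plan is to reduce the statement to a planar computation and then to one classical incidence fact. First I would record that both Laplace points $y^1$ and $y^2$ of the elementary quadrilateral lie in the plane $\Pi$ of its circumcircle $\Gamma$ (center $m$, radius $r$): each Laplace point is the intersection of two secant lines of $\Gamma$, namely the two edges of one coordinate direction, and those lines lie in $\Pi$. Next I would pin down the two orthogonal spheres. A sphere centered at a point $c\in\Pi$ whose inversion fixes $\Gamma$ --- which is exactly the property defining the orthogonal spheres in \Cref{lem:strip-sphere} --- meets $\Pi$ in its equatorial circle centered at $c$, and on $\Pi$ the inversion restricts to the planar inversion in this circle. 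That planar inversion fixes $\Gamma$ precisely when the equatorial circle is orthogonal to $\Gamma$, i.e.\ when its radius $\rho$ satisfies $\rho^2=\lvert c-m\rvert^2-r^2$, the power of $c$ with respect to $\Gamma$ (positive by the genericity assumption, since the Laplace points lie outside $\Gamma$). Hence $S^1,S^2$ have centers $y^1,y^2\in\Pi$ and radii $\rho_1^2=\lvert y^1-m\rvert^2-r^2$, $\rho_2^2=\lvert y^2-m\rvert^2-r^2$.

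Because the two centers lie in the common plane $\Pi$, the spheres $S^1$ and $S^2$ are orthogonal if and only if their equatorial circles in $\Pi$ are, both conditions reading $\lvert y^1-y^2\rvert^2=\rho_1^2+\rho_2^2$. Substituting the radii and expanding $\lvert y^1-y^2\rvert^2=\lvert y^1-m\rvert^2+\lvert y^2-m\rvert^2-2\,(y^1-m)\cdot(y^2-m)$, the whole condition collapses to the single equation
\[
  (y^1-m)\cdot(y^2-m)=r^2,
\]
which says precisely that $y^1$ and $y^2$ are conjugate points with respect to $\Gamma$.

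To finish I would establish this conjugacy. Labelling the four concircular vertices $A=x\ind{i,j}$, $B=x\ind{i+1,j}$, $C=x\ind{i+1,j+1}$, $D=x\ind{i,j+1}$, the Laplace point $y^1$ is the intersection of the opposite sides $AB$ and $CD$, while $y^2$ is the intersection of the opposite sides $AD$ and $BC$; thus $y^1$ and $y^2$ are two of the three diagonal points of the complete quadrangle $ABCD$ inscribed in $\Gamma$. By the classical theorem that the diagonal triangle of a quadrangle inscribed in a conic is self-polar, each diagonal point lies on the polar of the others, so $y^1$ and $y^2$ are conjugate with respect to $\Gamma$; for a circle of center $m$ and radius $r$ this is exactly the relation $(y^1-m)\cdot(y^2-m)=r^2$ derived above.

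I expect the main obstacle to be this last step. The reduction to $(y^1-m)\cdot(y^2-m)=r^2$ is routine, but one must correctly recognize $y^1,y^2$ as two vertices of the self-polar diagonal triangle and invoke the inscribed-quadrangle polarity; if a self-contained argument is preferred, I would re-derive it by applying an inversion centered at one vertex that sends $\Gamma$ to a line, which turns the conjugacy into an elementary statement about the remaining collinear images. Everything else --- placing the centers in $\Pi$, the power/radius identity for orthogonality to $\Gamma$, and the equivalence of sphere- and circle-orthogonality --- is elementary once the planarity of $y^1$ and $y^2$ is noted.
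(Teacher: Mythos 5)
Your proof is correct, but it follows a genuinely different route from the paper's. Both arguments begin the same way: the centers $y^1,y^2$ lie in the plane $\Pi$ of the circumcircle, so the two orthogonal spheres are orthogonal to $\Pi$ and the whole question reduces to a configuration of circles in $\Pi$. From there the paper finishes synthetically in two lines: an inversion sends one of the two orthogonal circles to a straight line, the circular quadrilateral becomes an (isosceles) trapezoid, and that line is the axis of symmetry of the trapezoid, hence orthogonal to the other symmetry circle. You instead translate orthogonality of the two circles into the metric identity $(y^1-m)\cdot(y^2-m)=r^2$, i.e.\ conjugacy of the two Laplace points with respect to the circumcircle, and then obtain that conjugacy from the classical fact that the diagonal triangle of a quadrangle inscribed in a conic is self-polar (your identification of $y^1=AB\cap CD$ and $y^2=AD\cap BC$ as two of the three diagonal points matches the paper's definition of the Laplace points, and the power of each point is positive because the quadrilateral is embedded). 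Your computation of the radii via the power of the point, and the equivalence of sphere- and circle-orthogonality when both centers lie in $\Pi$, are both correct. What your approach buys is that it isolates the genuinely projective content of the lemma --- the Laplace points are conjugate with respect to the conic through the four vertices --- which is exactly the kind of statement that survives the passage to quadrics in \Cref{sec:trans_in_quadric}; what the paper's argument buys is brevity, at the cost of asking the reader to see the trapezoid symmetry after the normalizing inversion.
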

\begin{proof}
  Since the centers of the spheres lie in the plane of the quadrilateral, the spheres are orthogonal to the plane.
  Hence, we only need to consider the configuration of circles in the plane.
  If we map one of circles to a straight line by an inversion, the quadrilateral becomes a trapezoid and one of the orthogonal circles becomes a line.
  This line is orthogonal to the other symmetry circle of the trapezoid.
\end{proof}

\begin{figure}[tb]
  \centering
  \def\tikzscale{.6} \begin{tikzpicture}[scale=\tikzscale]
\clip (-5,-3) rectangle (7,7);

\def\r{2}

\def\rr{2.2}

\coordinate (C1) at (0,0);
\coordinate (C2) at (1,3);

\draw[name path=circle1] (C1) circle (\r);
\draw[name path=circle2] (C2) circle (\rr);

\coordinate (xij) at ($(C1) + (-35:\r)$);
\coordinate (xi1j) at ($(C1) + (255:\r)$);

\path[name intersections={ of = circle1 and circle2}];
\coordinate (xij1) at (intersection-1);
\coordinate (xi1j1) at (intersection-2);

\coordinate (y1) at (intersection of xij--xi1j and xij1--xi1j1);
\coordinate (y2) at (intersection of xij--xij1 and xi1j--xi1j1);
\coordinate (z) at (intersection of xij--xi1j1 and xi1j--xij1);
\path[name path=line1] (y2)--(z);
\path[name intersections={ of = circle1 and line1}];
\coordinate (p1) at (intersection-1);
\coordinate (p2) at (intersection-2);

\coordinate (xij2) at ($(C2) + (-10:\rr)$);
\coordinate (xi1j2) at ($(C2) + (115:\rr)$);

\path[name path=line2] (y1)--($(y1)!5!(xij2)$);
\path[name intersections={ of = circle2 and line2}];
\coordinate (xij2) at (intersection-2);
\coordinate (xi1j2) at (intersection-1);

\node [draw, fill=lightblue!10] at (y1) [circle through={(p1)}] {};
\draw[myline, fill=lightblue] (xij)--(xi1j)--(xi1j1)--(xij1)--cycle;
\draw[myline, dashed] (y1)--(xij2);
\draw[myline, dashed] (y1)--(xij1);
\draw[myline, dashed] (y1)--(xij);

\draw[myline, fill=lightblue] (xij2)--(xi1j2)--(xi1j1)--(xij1)--cycle;
\node [draw, myline] at (y1) [circle through={(p1)}] {};

\draw(C1) circle (\r);
\draw(C2) circle (\rr);

\node[mylabel, fill=lightblue!10] at (xij) [below right=.1] {$x\ind{i,j}$};
\node[mylabel] at (xi1j) [below left=.1] {$x\ind{i+1,j}$};
\node[mylabel, fill=lightblue!10] at (xij1) [right=.2] {$x\ind{i,j+1}$};
\node[mylabel, fill=lightblue!10] at (xij2) [above right=.1] {$x\ind{i,j+2}$};
\node[mylabel] at (xi1j1) [left=.2] {$x\ind{i+1,j+1}$};
\node[mylabel] at (xi1j2) [above right] {$x\ind{i+1,j+2}$};
\node[mylabel, fill=lightblue!10] at (y1) [below=.1] {$y^1\ind{i,j}=y^1\ind{i,j+1}$};

\foreach \p in {xij, xi1j, xi1j1, xij1, xij2, xi1j2, y1}
 	\path (\p) [mypoint=white];

\end{tikzpicture}
  \hspace{1ex}
  \def\tikzscale{.6} \begin{tikzpicture}[scale=\tikzscale]
\clip(-5,-6) rectangle (6,4);

\def\r{2}
\def\rr{2.5}

\coordinate (C1) at (0,0);
\coordinate (C2) at (1,2.5);

\path[name path=circle1] (C1) circle (\r);
\path[name path=circle2] (C2) circle (\rr);

\coordinate (xij) at ($(C1) + (-45:\r)$);
\coordinate (xi1j) at ($(C1) + (265:\r)$);

\path[name intersections={ of = circle1 and circle2}];
\coordinate (xij1) at (intersection-1);
\coordinate (xi1j1) at (intersection-2);

\coordinate (y1) at (intersection of xij--xi1j and xij1--xi1j1);
\coordinate (y2) at (intersection of xij--xij1 and xi1j--xi1j1);
\coordinate (z) at (intersection of xij--xi1j1 and xi1j--xij1);

\path[name path=line1] (y2)--(z);
\path[name intersections={ of = circle1 and line1}];
\coordinate (p1) at (intersection-1);

\path[name path=line2] (y1)--(z);
\path[name intersections={ of = circle1 and line2}];
\coordinate (p2) at (intersection-1);

\veclength{$(p1)-(y1)$}
\edef\radius{\VecLength};
\path[name path = ocircle1, fill=lightblue, opacity=.2] (y1) circle (\radius);

\veclength{$(p2)-(y2)$}
\edef\rradius{\VecLength};
\path[name path = ocircle2, fill=lightblue, opacity=.2] (y2) circle (\rradius);

\shade[inner color=white, outer color=lightblue!20] ($(y1)+(-45:2)$) circle (1.0);
\shade[inner color=white, outer color=lightblue!20] ($(y2)+(-45:2)$) circle (1.0);

\draw[myline, dashed] (y1)--(xij1);
\draw[myline, dashed] (y1)--(xij);

\draw[myline, dashed] (y2)--(xi1j);
\draw[myline, dashed] (y2)--(xij);

\draw[myline, fill=lightblue] (xij)--(xi1j)--(xi1j1)--(xij1)--cycle;

\path[name intersections={ of = ocircle1 and ocircle2, by={i1,i2}}];

\begin{scope}
\def\anglesize{.5cm}
\path[name path=anglesymbol] (i1) circle (\anglesize) {};
\path[name intersections={ of =ocircle1 and anglesymbol, by={as1,as2}}];
\path[name intersections={ of =ocircle2 and anglesymbol, by={as3,as4}}];
\veclength{$(as1)-(as4)$}
\path[name path=angleconstruction] ($(as1)!.5!(as4)$) circle (\VecLength/2);
\path[name path=perpbisector] ($(i1)!{-2*\anglesize}!(i2)$)--($(i1)!{2*\anglesize}!(i2)$);
\path[name intersections={ of = angleconstruction and perpbisector, by={a1,a2}}];
\coordinate (anglepoint) at (a1);
\draw[myline] (as4)--(anglepoint)--(as1);
\end{scope}

\node [draw, myline] at (y1) [circle through={(p1)}] {};
\node [draw, myline] at (y2) [circle through={(p2)}] {};

\draw(C1) circle (\r);

\node at (xij) [below right=.0] {$x\ind{i,j}$};
\node at (xi1j) [below left=.0] {$x\ind{i+1,j}$};
\node at (xij1) [above right=.0] {$x\ind{i,j+1}$};
\node at (xi1j1) [above left] {$x\ind{i+1,j+1}$};
\node at (y1) [right] {$y^1\ind{i,j}$};
\node at (y2) [below] {$y^2\ind{i,j}$};

\foreach \p in {xij, xi1j, xi1j1, xij1, y1,y2}
 	\path (\p) [mypoint=white];

\end{tikzpicture}
  \caption{
    Common orthogonal sphere to neighboring circular quadrilaterals (left). 
    Orthogonality of spheres for different directions (right).
  }
  \label{fig:orthogonal-spheres}
\end{figure}
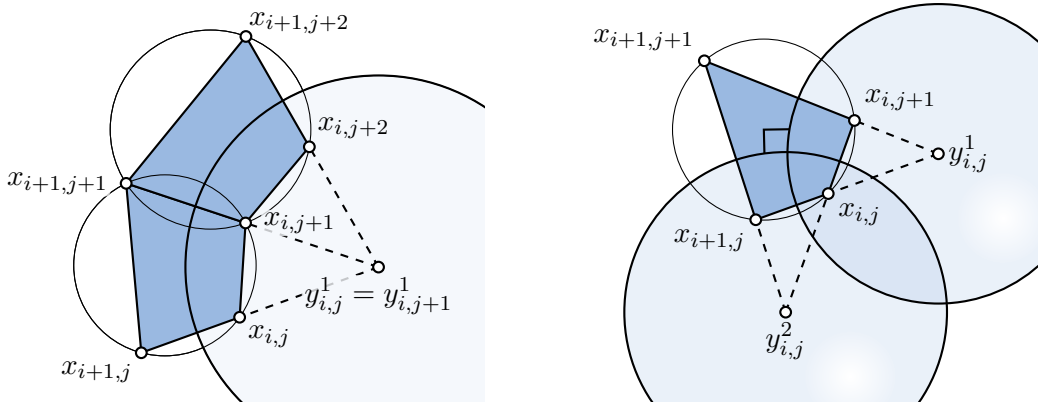

The idea is to use the orthogonal spheres of Lemma~\ref{lem:ortho-spheres} to classify multi-circular nets.
To understand the different families of orthogonal spheres that exist, we move on to a more elaborate model of sphere geometry.

\subsection{Circular nets as Q-nets in quadrics}
\label{ssec:circular-nets-as-q-nets}

We consider the projective model of M\"obius geometry in~$\PMoebSpace$.
A very brief introduction of the necessary projective dictionary of M\"obius geometry is given in~\cite[Sect.~9.3]{BobenkoSuris:DDG}.
A detailed geometric description of M\"obius geometry can be found in Blaschke~\cite[Chp.~2,3,6]{Blaschke:DiffGeoIII}.

The space $\MoebSpace$ is the real vector space~$\R^5$ equipped with the following inner product
\[
  \bilform{\hc x}{\hc y} =
  \hc x_1 \hc y_1 + \hc x_2 \hc y_2 + \hc x_3 \hc y_3
  + \hc x_4 \hc y_4 - \hc x_5 \hc y_5.
\]
The M\"obius quadric in the corresponding projective space~$\PMoebSpace$ is given as usual by
$\quadric = \{ [\hc x] \in \PMoebSpace \,|\, \bilform{\hc x}{\hc x} = 0 \}$ (see \Cref{fig:multi-circular-net}, left).
With respect to the affine coordinates $(\tfrac{\hc x_i}{\hc x_5})_{i=1}^4 \in \R^4$ the quadric is the unit $3$-sphere.
Hence, if we have a (multi-)$Q$-net in the quadric then it corresponds to a (multi-)circular net in the $3$-sphere (see \Cref{fig:multi-circular-net}, right).
The points~$s$ outside the quadric, that is with $\bilform{\hc s}{\hc s} > 0$ represent the 2-spheres in~$\Sphere^3$ obtained from the intersection of~$\Sphere^3$ with the polar plane $\{ [\hc x] \in \quadric\,|\, \bilform{\hc x}{\hc s} = 0 \}$ of~$s$.
The projective reflections discussed in Section~\ref{ssec:translations-and-reflections} become sphere inversions in the respective $2$-spheres.
If we have two spheres represented by points~$s_1, s_2 \in \PMoebSpace$ then the corresponding spheres intersect orthogonally if $\bilform{\hc s_1}{\hc s_2} = 0$.

In Section~\ref{sec:trans_in_quadric} we have seen that the Laplace transforms of projective translation nets in quadrics lie in orthogonal subspaces.
Furthermore, these Laplace transforms can be used to generate the nets by reflections from an initial point on the quadric as stated in \Cref{thm:reflections-translation-net}.
So we will classify multi-circular nets by the families of orthogonal spheres and the corresponding orthogonal subspaces in the projective model $\PMoebSpace$ of M\"obius geometry.

\begin{theorem}
  \label{thm:circular}
  Let $x: \Z^2 \to \Sphere^3$ be a multi-circular net. Then it is obtained by a M\"obius transformation of one of the following:
  \begin{enumerate}
  \item a surface of revolution,
  \item a cone, or
  \item a cylinder.
  \end{enumerate}
\end{theorem}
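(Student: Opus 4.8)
The plan is to leverage the machinery of \Cref{sec:trans_in_quadric} by viewing the multi-circular net as a projective translation net (multi-Q-net) inside the M\"obius quadric $\quadric \subset \PMoebSpace$. By \Cref{thm:circular} we know (or by the equivalence in \Cref{thm:multiqnets}) that a multi-circular net, being a multi-Q-net in the quadric, is generated by two families of reflections $\sigma^1_i = \sigma_{n^1_i}$ and $\sigma^2_j = \sigma_{n^2_j}$ as in \Cref{thm:reflections-translation-net}, where the representative points satisfy $\bilform{\hc n^1_i}{\hc n^2_j} = 0$ for all $i,j$. In the M\"obius model these reflections are exactly sphere inversions, and the orthogonality condition says the two families of spheres $\{n^1_i\}$ and $\{n^2_j\}$ are mutually orthogonal. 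So the whole classification reduces to understanding which two mutually orthogonal families of spheres can occur.

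First I would analyze the subspace geometry: the Laplace transforms $\hc n^1_i$ span a subspace $V^1$ and the $\hc n^2_j$ span $V^2$, with $V^2 \subseteq (V^1)^\perp$. Since each $n^k$ lies outside the quadric ($\bilform{\hc n}{\hc n} > 0$, representing a genuine 2-sphere), I would restrict the form to these subspaces and use the signature $(4,1)$ of $\MoebSpace$ to bound their dimensions. The key point is that one family of orthogonal spheres, say the $\{n^1_i\}$, all share a common orthogonal complement structure that forces them into a pencil or bundle; dually, the $\{n^2_j\}$ are confined to the polar subspace. I would then invoke the normal form for a pencil of mutually non-orthogonal spheres (their common orthogonal family) to pin down the geometric type.

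The second step is to translate the three resulting algebraic configurations into Euclidean pictures by choosing a suitable point to send to infinity via a M\"obius transformation. The three cases correspond to how the polar subspace meets the quadric: if the $\{n^2_j\}$ are spheres through a common pair of points (a pencil with two real base points), sending one base point to infinity turns that family into concentric spheres and the orthogonal family into planes through a common axis --- this is the surface of revolution. If the base points coincide (tangent pencil), one obtains a cone after moving the tangency point to infinity. If there are no real base points (the common orthogonal circle degenerates appropriately), the family becomes parallel planes and one gets a cylinder. Each case is checked by confirming that the generated net, by \Cref{thm:reflections-translation-net}, indeed produces the claimed surface sampled along its principal curvature lines.

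The main obstacle will be the case distinction itself: carefully enumerating the possible mutual positions of the two orthogonal families inside the Lorentzian space $\R^{4,1}$ and verifying that no further exotic cases arise (for instance, that a family cannot be simultaneously a pencil in a way producing a new surface type). I would handle this by reducing to the induced form on $V^1 \oplus V^2$ and classifying the pencils according to the intersection of their base locus with $\quadric$ --- two real points, one double point, or none --- which yields exactly the trichotomy revolution/cone/cylinder. The subtlety is ensuring the embeddedness and genericity assumptions (\Cref{lem:embedded-multi-circular,lem:ortho-spheres}) rule out degenerate sub-configurations so the three listed types are exhaustive up to M\"obius equivalence.
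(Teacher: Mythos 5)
Your overall strategy coincides with the paper's: lift the net to a multi-Q-net in the M\"obius quadric, use \Cref{prop:projective-translation-in-quadric} and \Cref{thm:reflections-translation-net} to reduce to two mutually polar families of sphere inversions, and classify by the type of pencil spanned by one of the families. However, the heart of the argument --- translating the three algebraic pencil types into the three Euclidean surface classes --- is carried out incorrectly. A pencil of spheres with common base points does not become a family of concentric spheres when a base point is sent to infinity; it becomes a family of planes through the image of the remaining base locus. In $\R^{4,1}$ the elliptic case, signature $({+}{+})$, is a pencil of spheres through a common \emph{circle}, which normalizes to planes through a common line; reflections in these generate rotations, hence the surfaces of revolution. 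Concentric spheres arise from the \emph{hyperbolic} pencil, signature $({+}{-})$, i.e., the case with \emph{no} common base points; reflections in concentric spheres generate scalings and hence \emph{cones}, not cylinders. The tangent (parabolic) pencil, signature $({+}0)$, normalizes to parallel planes, whose reflections generate translations and hence \emph{cylinders}, not cones. So two of your three attributions are swapped and the third rests on a false normalization. The likely source of the confusion is your phrase ``classify the pencils according to the intersection of their base locus with $\quadric$'': the base locus of a pencil $U$ is $U^\perp \cap \quadric$, while the limit points are $U \cap \quadric$, and unlike in the two-dimensional circle picture these are not interchangeable here --- for $U$ of signature $({+}{+})$ the polar space $U^\perp$ has signature $({+}{+}{-})$ and meets the quadric in a circle, not in two points.

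A second, smaller gap: you do not treat the case in which one of the two polar subspaces is one-dimensional. Since $\dim \MoebSpace = 5$, at least one of the two subspaces has dimension at most two, and the paper disposes of the one-dimensional case separately (a single reflection generates only one strip, contained in a cone) before running the signature trichotomy on a two-dimensional subspace. Your appeal to genericity and to \Cref{lem:embedded-multi-circular} and \Cref{lem:ortho-spheres} gestures at this, but the dimension count and the degenerate case must be made explicit for the list of three surface types to be exhaustive.
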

\begin{proof}
  If~$x$ is a multi-circular net on~$\Sphere^3$ it is lifted to a multi-Q-net in the M\"obius quadric in~$\PMoebSpace$ (see~\cite[Sect.~9]{BobenkoSuris:DDG}).
  By \Cref{prop:projective-translation-in-quadric} the Laplace transforms~$\hc y^1$ and $\hc y^2$ of the net lie in polar subspaces with respect to the M\"obius inner product.
  We denote the subspaces spanned by the Laplace transforms by
  $U^1 = \lspan{\hc y^1_i| i \in \Z}$ and $U^2 = \lspan{\hc y^2_j\,|\, j \in \Z}$.
  Additionally, by \Cref{thm:reflections-translation-net}, the whole net is generated by the reflections induced by the points of the Laplace transforms.
  As we are only considering embedded circular quadrilaterals, all Laplace points lie outside the quadric, i.e., the signature is~$({+})$.
  Hence we can classify the nets depending on the signatures of the subspaces $(U^1, U^2)$ of~$\MoebSpace$.
  As the dimension of~$\MoebSpace$ is five, at least one of the two subspaces has dimension at most two.
  So we consider the signatures of subspaces of dimensions as most two and their complements:

  If one of the spaces is 1-dimensional, then one family of parameter lines is generated by a single reflection.
  Then the two parameter lines are symmetric with respect to a single sphere.
  Hence the entire net only consists of a single strip which is contained in a cone.

  If one of the spaces is 2-dimensional, then we have to distinguish the following cases:
  \begin{itemize}
  \item $({+}{+})$: %
    The subspace of signature $({+}{+})$ is a pencil of spheres passing through a common circle. Looking at a representation in Euclidean space, this circle can be transformed into a line and the spheres passing through the circle become planes passing through this line. Hence the surfaces generated by the reflections in these planes are M\"obius equivalent to \emph{surfaces of revolution}.
  \item $({+}{-})$: %
    The subspace of signature $({+}{-})$ is a hyperbolic pencil of spheres
    (orthogonal to the set of spheres passing through two points). By
    a M\"obius transformation these spheres can be normalized to
    concentric spheres. Hence the surfaces are M\"obius equivalent to \emph{cones}.
  \item $({+}0)$: %
    The subspace of signature $({+}0)$ corresponds to a pencil of spheres
    tangent to a plane at a common point. By a M\"obius transformation
    this pencil can be transformed to a family of parallel
    planes. Hence the surfaces are M\"obius equivalent to \emph{cylinders}.
  \end{itemize}
\end{proof}

The exact same family of smooth surfaces is characterized in the Theorem of Vessiot~\cite[p.~132]{HertrichJeromin:MoebiusDiffGeo} stating the isothermic channel surfaces are M\"obius transforms of cones, cylinders, or surfaces of revolution.
\emph{Discrete isothermic nets} are circular nets with the additional property, that every vertex and its four diagonal neighbors lie on a common sphere, introduced in~\cite{Bobenko:isothermic}. 
This is obviously satisfied by multi-circular nets, since the four diagonal neighbors lie on a common circle.
So we have the following Proposition.
\begin{proposition}
  Multi-circular nets are discrete isothermic.
\end{proposition}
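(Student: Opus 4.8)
The plan is to verify the defining property of discrete isothermic nets directly from the multi-circular assumption, which is essentially a one-line observation once the definitions are unpacked. Recall that a circular net is discrete isothermic if, for every vertex $x\ind{i,j}$, the vertex together with its four diagonal neighbors $x\ind{i-1,j-1}$, $x\ind{i+1,j-1}$, $x\ind{i+1,j+1}$, $x\ind{i-1,j+1}$ lies on a common sphere. First I would note that multi-circular nets are in particular circular nets (taking $i_1 = i_0+1$, $j_1 = j_0+1$ in \Cref{defn:multi-circular-net} recovers \Cref{defn:circular-net}), so the only thing left to check is the diagonal-sphere condition.

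Next I would apply the multi-circular property to the coordinate quadrilateral determined by the index choices $(i_0,i_1) = (i-1,i+1)$ and $(j_0,j_1) = (j-1,j+1)$. By \Cref{defn:multi-circular-net} the four corners $x\ind{i-1,j-1}$, $x\ind{i-1,j+1}$, $x\ind{i+1,j+1}$, $x\ind{i+1,j-1}$ are circular, i.e.\ concyclic. These are exactly the four diagonal neighbors of $x\ind{i,j}$. A circle lies on infinitely many spheres (any sphere through the circle works), so one only needs to produce a single sphere through all five points; equivalently, one picks the unique sphere through the four concyclic diagonal neighbors and the center $x\ind{i,j}$, which exists because five points in general position determine a sphere and four of them already lie on a circle (hence on a common plane section of that sphere). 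This establishes that the vertex and its four diagonal neighbors are co-spherical, which is precisely the defining condition of \cite{Bobenko:isothermic}.

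I would conclude that, since both required conditions hold for every vertex, $x$ is a discrete isothermic net. The argument is genuinely as short as the text suggests: the substance is entirely in \Cref{defn:multi-circular-net}, which forces the diagonal four-point sets to be circular, and the passage from ``circular'' to ``co-spherical with one additional point'' is immediate because any circle extends to a sphere through a further generic point. I do not anticipate a real obstacle here; the only point deserving a word of care is that the four diagonal neighbors being concyclic already lie on a plane, so adding the fifth point $x\ind{i,j}$ (off that plane in general position) pins down a genuine sphere rather than forcing a degeneration — which is consistent with the paper's standing genericity assumption.
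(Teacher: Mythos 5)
Your argument is correct and is exactly the paper's reasoning: the paper's one-line justification is that the four diagonal neighbors of any vertex form a coordinate quadrilateral of the multi-circular net and hence lie on a common circle, so together with the central vertex they lie on a common sphere. Your write-up merely spells out the index substitution and the circle-plus-point-determines-a-sphere step, which is fine.
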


\paragraph{Cauchy data for multi-circular nets}
Multi-circular nets can be constructed using \Cref{thm:reflections-translation-net} and the following initial data:
\begin{itemize}
\item
  Let~$x_{00} \in \Rinfty$ and $s^1_i$ and $s^2_j$ ($i,j \in \Z$) be two families of spheres, such that $s^1_i$ and $s^2_j$ intersect orthogonally for all $i,j \in \Z$.
\end{itemize}

Then there exists a unique multi-circular net such that the circles through the vertices $(x\ind{i,j}, x\ind{i+1,j}, x\ind{i+1,j+1}, x\ind{i,j+1})$ of the quadrilaterals are orthogonal to the spheres~$s^1_i$ and~$s^2_j$.
This net is generated by reflecting the initial point~$x_{00}$ in the respective spheres.
That is, $x\ind{i+1,j}$ is the image of~$x\ind{i,j}$ of the inversion in~$s^1_i$ and  $x\ind{i,j+1}$ is the image of~$x\ind{i,j}$ of the inversion in~$s^2_j$.
The two inversions commute, since the spheres of distinct directions intersect orthogonally.

\subsection{Piecewise smooth extensions}
\label{sec:piecewise-smooth-ext}

We start this section with a theorem that characterizes smooth parametrized surfaces with circular parameter rectangles. 
These surfaces are a suitable circular analog of projective translation surface patches.
We will use them for the extension of circular nets.
These patches are slightly more general than the Dupin cyclide patches used in~\cite{BobenkoHuhnenVenedey:Cyclides}.

\begin{theorem}
  \label{thm:multi-circular-smooth}
  Let $x: [0,1]^2 \to \R^3$ be a parametrized surface patch such that every parameter quadrilateral is circular.
  Then the patch is parametrized along principal curvature lines and is a M\"obius transform of one of the following:
  \begin{enumerate}
  \item a surface of revolution,
  \item a cone, or
  \item a cylinder.
  \end{enumerate}
  Furthermore, every pair of principal curvature lines of the same direction can be mapped onto each other by sphere inversions.
\end{theorem}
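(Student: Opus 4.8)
The plan is to reduce the smooth statement \Cref{thm:multi-circular-smooth} to the already-proven discrete classification in \Cref{thm:circular} by passing to the projective model of M\"obius geometry and exploiting the quadric machinery of \Cref{sec:trans_in_quadric}. First I would lift the smooth patch $x:[0,1]^2\to\R^3$ to the M\"obius quadric $\quadric\subset\PMoebSpace$, obtaining a smooth map $\hc x:[0,1]^2\to\R^{5}$ with $\bilform{\hc x}{\hc x}=0$. The hypothesis that \emph{every} parameter rectangle is circular means that for any $u_0\neq u_1$ and $v_0\neq v_1$ the four lifted points are coplanar in $\PMoebSpace$ (four points on $\quadric$ are concircular in $\Sphere^3$ exactly when their lifts are coplanar). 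Sampling $\hc x$ on an arbitrary grid $\{u_i\}\times\{v_j\}$ therefore produces a discrete multi-Q-net in the quadric, to which \Cref{prop:projective-translation-in-quadric} and \Cref{thm:multiqnets} apply.

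The key structural step is to show that the smooth lift is itself a projective translation surface in the quadric, i.e.\ $\hc x(u,v)=\hc p(u)+\hc q(v)$ for suitable curves $\hc p,\hc q:[0,1]\to\R^5$. I would argue this as the smooth limit of the discrete case: since every sampled sub-grid is a multi-Q-net, the discrete Laplace transforms along each coordinate strip coincide (the perspectivity statement \Cref{thm:multiqnets}\eqref{item:all-perspective}), and refining the grid the finite differences $\hc x(u+h,v)-\hc x(u,v)$ must, after the normalization of \Cref{thm:multiqnets}, be independent of $v$ in the limit. Concretely this gives $\partial_u\hc x$ independent of $v$ and $\partial_v\hc x$ independent of $u$ in a fixed normalization, which integrates to the translation form. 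Equivalently I could invoke the smooth characterization of projective translation surfaces already quoted in \Cref{sec:piecewise_smooth_extension} (the unnumbered theorem after \Cref{def:multiqstar}'s section) applied inside $\PMoebSpace$: planar parameter rectangles force the translation structure and degenerate Laplace transforms.

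Once the translation structure is established, \Cref{prop:projective-translation-in-quadric} forces the Laplace transform velocity vectors $\partial_u\hc p$ and $\partial_v\hc q$ to lie in polar (orthogonal) subspaces with respect to the M\"obius inner product. From here the classification is \emph{identical} to the proof of \Cref{thm:circular}: I set $U^1=\lspan{\partial_u\hc p(u)\mid u}$ and $U^2=\lspan{\partial_v\hc q(v)\mid v}$, note that embeddedness forces all the generating spheres to have positive signature, and run the same signature analysis of the pair $(U^1,U^2)$ inside the five-dimensional space $\MoebSpace$. Since the two subspaces are mutually polar, at least one has dimension at most two, and the cases $({+}{+})$, $({+}{-})$, $({+}0)$ yield, respectively, a surface of revolution, a cone, and a cylinder up to M\"obius transformation, exactly as in the discrete theorem. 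The principal-curvature-line statement follows because the smooth limit of embedded circular nets parametrizes along curvature lines (the approximation result cited in the Remark after \Cref{defn:circular-net}), and the final assertion — that any two coordinate curves of one family are interchanged by a sphere inversion — is the smooth version of \Cref{lem:strip-sphere}: the reflection $\sigma_n$ in the polar sphere of a fixed Laplace direction maps one $v$-curve onto another, since along a strip the generating normal $\hc n$ is constant.

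The main obstacle I anticipate is the second step, namely rigorously upgrading the discrete translation structure to the smooth translation representation $\hc x(u,v)=\hc p(u)+\hc q(v)$ with a \emph{single, globally consistent} choice of homogeneous coordinates. The discrete normalization of \Cref{thm:multiqnets} is only defined up to the scalar freedom in homogeneous coordinates, and one must check that the limiting scalar function is smooth and coherent across all sub-grids rather than merely along one strip; controlling this gauge freedom and confirming that the limiting Laplace curves $\partial_u\hc p,\partial_v\hc q$ are genuinely curves (and not higher-dimensional) under the genericity/embeddedness assumptions is where the real work lies. If one prefers to avoid the limiting argument entirely, the cleanest route is to cite the smooth projective-translation-surface theorem already recorded in the excerpt and apply it verbatim within $\PMoebSpace$, leaving only the signature classification to transcribe from \Cref{thm:circular}.
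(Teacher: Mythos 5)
Your proposal is correct and follows essentially the same route as the paper: sample the patch to obtain a multi-circular net, invoke \Cref{thm:circular} (equivalently, the polar-subspace signature analysis behind it), observe that the classifying sphere inversions persist for the continuous parameter curves, and deduce the curvature-line property from orthogonality of the parameter curves. The only cosmetic difference is that the paper handles the conjugate/principal-line step by citing Degen's smooth result on planar parameter rectangles rather than by explicitly rebuilding the translation structure of the lift in the quadric, which sidesteps the gauge-consistency worry you raise.
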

\begin{proof}
  By sampling the given surface~$x$ along the parameter lines, we obtain a multi-circular net.
  By \Cref{thm:circular} the sampled net is M\"obius equivalent to a discrete para\-metrized piece of a surface of revolution, a cone, or a cylinder.
  In particular there exist two families of sphere inversions that map the discrete parameter lines onto each other.
  This property is preserved if we consider parameter curves instead of the sampled polygons.
  Hence for each pair of $u$- resp.\ $v$-lines, there exists a sphere inversion, that maps the pairs of lines onto each other.
  Furthermore, for a quadrangle bounded by parameter lines, the $u$- and the $v$- sphere are orthogonal to each other.
  
  Taking the limit of the quadrangle in one direction shows, that the parameter curves lie on a sphere, that is, are spherical curves.
  Taking the limit in the other direction preserves the orthogonality and shows that the two spherical curves intersect orthogonally at the limit point of the parameter quadrangle.

  Surfaces with planar net quadrangles are projective translation surfaces parametrized along conjugate parameter lines (see~\cite[Sect.~2]{Degen:1994:PlaneSilhouettes1}).
  In our case, the parameter curves intersect orthogonally, and hence the conjugate parametrization is indeed a curvature line parametrization.

  Now we can use the same arguments as in the discrete case to classify the possible surfaces in terms of the families of orthogonal spheres.
\end{proof}

We call the surfaces characterized in the above \Cref{thm:multi-circular-smooth} \emph{M\"obius translation surfaces}.
Similar to \Cref{defn:adapted-patch} we define adapted patches for circular nets in M\"obius geometry.

\begin{definition}
  \label{defn:apated-circular-patch}
  Let $x\ind{00}, x\ind{10}, x\ind{11}, x\ind{01} \in \Rinfty$ be a circular quadrilateral and $f: [0,1]^2 \to \Rinfty$ be a M\"obius translation surface patch.
  Then the patch is \emph{adapted to the quadrilateral} if the four corners of the patch coincide with the corners of the quadrilateral, i.e.\
  \[
    f(i,j) = x\ind{ij} \text{, for $i,j \in \{0,1\}$.}
  \]
\end{definition}

We want to construct a M\"obius translation surface patch adapted to a given circular quadrilateral.


\begin{lemma}
  \label{lem:circular-adapted-patch}
  Let $x:\{0,1\}^2 \to \R^3$ be a circular quadrilateral.
  Further let $p_0:[0,1] \to \R^3$ be a spherical curve on a sphere~$Q_u$ with $p_0(i) = x\ind{i,0}$, $(i \in \{0,1\})$ and
  $q_0: [0,1] \to \R^3$ a circular arc with $q_0(j) = x\ind{0,j}$, $(j \in \{0,1\})$ such that the arc~$q_0$ is orthogonal to~$Q_u$.
  Then there exists a unique M\"obius translation surface patch adapted to the quadrilateral~$x$ with $f(u,0) = p_0(u)$ and $f(0,v) = q_0(v)$.
\end{lemma}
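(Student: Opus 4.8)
The plan is to mirror the projective construction of \Cref{lem:unique-multi-Q-extension-4curves}, working in the projective model of M\"obius geometry in $\PMoebSpace$ and invoking \Cref{prop:projective-translation-in-quadric} together with \Cref{thm:multi-circular-smooth}. First I would lift the data to the M\"obius quadric $\quadric$: the circular quadrilateral lifts to four null points, the spherical curve $p_0$ to a curve $\hc p_0(u)$ on the light cone, and the circular arc $q_0$ to $\hc q_0(v)$ on the light cone, with null lifts normalized so that $\hc p_0(0) = \hc q_0(0) = \hc x\ind{00}$. The two geometric hypotheses translate into linear-algebraic conditions on these lifts: writing $\hc s$ for the spacelike representative of the sphere $Q_u$, the condition $p_0 \subset Q_u$ becomes $\bilform{\hc p_0(u)}{\hc s} = 0$ for all $u$ (hence also $\bilform{\hc p_0'(u)}{\hc s} = 0$), while orthogonality of the circle $q_0$ to $Q_u$ controls the position of $\hc q_0$ and its derivative relative to the hyperplane $\hc s^\perp$.

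The surface is then produced by the translation-surface ansatz
\[
  f(u,v) = \bigl[\, \hc p_0(u) + \hc q_0(v) - \hc x\ind{00} \,\bigr],
\]
exactly as in the projective case. Equivalently, and more geometrically, the patch is generated in the spirit of \Cref{thm:reflections-translation-net} by reflecting $p_0$ in a one-parameter family of spheres $S(v)$ with $S(0) = Q_u$, fixed by the requirement $\sigma_{S(v)}(x\ind{00}) = q_0(v)$ and orthogonality to the $u$-direction spheres. Since $p_0$ lies on $Q_u = S(0)$, the base reflection fixes $p_0$ pointwise, so $f(\cdot,v) = \sigma_{S(v)}(p_0)$; this immediately yields $f(u,0) = \sigma_{Q_u}(p_0(u)) = p_0(u)$ and $f(0,v) = \sigma_{S(v)}(x\ind{00}) = q_0(v)$, i.e.\ the prescribed boundary behaviour and adaptedness to the four corners.

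The heart of the proof is verifying that these two descriptions agree and that $f$ indeed lands on the quadric, i.e.\ $\bilform{\hc f}{\hc f} = 0$ identically. Since $\hc p_0, \hc q_0, \hc x\ind{00}$ are all null and $\hc p_0(0) = \hc q_0(0) = \hc x\ind{00}$, this reduces to the separability identity $\bilform{\hc p_0(u)}{\hc q_0(v)} = \bilform{\hc p_0(u)}{\hc x\ind{00}} + \bilform{\hc x\ind{00}}{\hc q_0(v)}$, equivalently to the single condition $\bilform{\hc p_0'(u)}{\hc q_0'(v)} = 0$ for all $u,v$ — precisely the polarity of the Laplace transforms from \Cref{prop:projective-translation-in-quadric}. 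I expect this to be the main obstacle: one has only the two scalings of $\hc p_0$ and $\hc q_0$ at one's disposal, so the vanishing of the two-variable function $\bilform{\hc p_0'}{\hc q_0'}$ cannot hold for arbitrary boundary curves and must be forced by the hypotheses. The key point is that $\bilform{\hc p_0'(u)}{\hc s} = 0$ confines all velocities $\hc p_0'(u)$ to the hyperplane $\hc s^\perp$, while orthogonality of $q_0$ to $Q_u$ places $\hc q_0'(v)$ in the polar position annihilating this hyperplane, so a suitable and essentially unique normalization of the null lifts kills the mixed term. Once $\bilform{\hc f}{\hc f} = 0$ is established, \Cref{prop:projective-translation-in-quadric} shows $f$ is a projective translation net in $\quadric$, hence has circular parameter rectangles, and \Cref{thm:multi-circular-smooth} identifies it as a M\"obius translation surface.

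For uniqueness I would argue that any adapted M\"obius translation patch with the prescribed boundary curves has, by \Cref{thm:multi-circular-smooth}, a translation lift $\hc f = \hc p + \hc q$ whose restrictions recover $\hc p_0 = \hc p + \hc q(0)$ and $\hc q_0 = \hc p(0) + \hc q$ up to the scalings already pinned down by the null and orthogonality conditions; hence $\hc f = \hc p_0(u) + \hc q_0(v) - \hc x\ind{00}$ and $f$ coincides with the constructed patch. This parallels the uniqueness argument of \Cref{lem:unique-multi-Q-extension-4curves}, the difference being that here the single sphere $Q_u$ together with the orthogonality of $q_0$ play the role that the perspectivity of the two pairs of opposite curves played in the projective setting, which is precisely why two boundary curves rather than four already determine the patch.
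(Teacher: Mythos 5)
Your reduction of existence to killing the mixed term $\bilform{\hc p_0'(u)}{\hc q_0'(v)}$ is a sound starting point, but the mechanism you give for it cannot work as stated, and it misses where the hypotheses actually enter. The polar of the hyperplane $\hc s^\perp$ is the line $\lspan{\hc s}$, so ``$\hc q_0'(v)$ annihilating this hyperplane'' would force $\hc q_0(v) = \hc x\ind{00} + \beta(v)\,\hc s$, which is not null for $\beta \neq 0$; no lift of $q_0$ has this property. What really makes the cancellation possible is that $q_0$ is a \emph{circle} orthogonal to $Q_u$: all its lifts lie in the $3$-space $W$ of that circle, $W$ contains $\hc s$, and the two null directions of $W \cap \hc s^\perp$ are $\hc x\ind{00}$ and the second intersection point $\hc e$ of the circle with $Q_u$. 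One must normalize $\hc q_0$ to have constant $\hc x\ind{00}$-coefficient \emph{and} normalize $\hc p_0$ so that $\bilform{\hc p_0(u)}{\hc e}$ is constant; note that the scaling of $\hc p_0$ is pinned down by data coming from $q_0$, not by $\bilform{\hc p_0'}{\hc s}=0$ alone (for a generic lift of $p_0$ the velocities span all of $\hc s^\perp$, leaving only $\lspan{\hc s}$ as annihilator). Your sketch never invokes $W$, i.e.\ never uses circularity of $q_0$, without which the lemma is false. This part is repairable.

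The more serious gap is the fourth corner, and here your argument fails rather than merely being vague. The admissible normalization is \emph{not} essentially unique: writing $A(u,v)=\bilform{\hc p_0(u)}{\hc q_0(v)}$, $B(u)=\bilform{\hc p_0(u)}{\hc x\ind{00}}$, $C(v)=\bilform{\hc q_0(v)}{\hc x\ind{00}}$, the nullity condition is $A = Bg + Ch$ with $g=1/\mu$, $h=1/\lambda$, $g(0)=h(0)=1$; if $(g,h)$ is one solution then so is $(g-cC,\,h+cB)$ for every constant $c$, since $BC=CB$. The resulting patches $f_c = \bigl[\hc p_0/(h+cB) + \hc q_0/(g-cC) - \hc x\ind{00}\bigr]$ all lie in the quadric and all satisfy $f_c(u,0)=p_0(u)$, $f_c(0,v)=q_0(v)$, yet they are genuinely different M\"obius translation patches (geometrically: there is a whole family of one-parameter M\"obius groups having the circle of $q_0$ as the orbit of $x\ind{00}$, one for each pencil of spheres inside $W$, and sweeping $p_0$ under any of them gives such a patch). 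What distinguishes them is exactly the fourth corner: $f_c(1,1)$ lies in the span of $\hc x\ind{00}, \hc p_0(1), \hc q_0(1)$ and on the quadric, hence on the circumcircle of $x\ind{00}, x\ind{10}, x\ind{01}$, and it moves along that circle as $c$ varies. So your claim that the construction ``immediately yields \ldots adaptedness to the four corners'' is wrong: only three corners are immediate, and both existence (some $c$ with $f_c(1,1)=x\ind{11}$) and uniqueness require the circularity of the quadrilateral, a hypothesis your proposal never uses. This is precisely the point around which the paper's (quite different, synthetic) proof is built: it normalizes $Q_u$ together with the Laplace sphere $R^2$ centered at $y^2$ — the inversion that is orthogonal to the circumcircle and swaps $x\ind{00}\leftrightarrow x\ind{01}$, $x\ind{10}\leftrightarrow x\ind{11}$ — into standard position and generates the patch by the group coming from the pencil spanned by $Q_u$ and $R^2$, so that the fourth corner lands on $x\ind{11}$ by construction.
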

\begin{proof}
  Let $R^2$ be the sphere centered at the Laplace point $y^2$ orthogonal to the circle of the quadrilateral.
  Depending on the intersection of~$Q_u$ and~$R^2$ we can normalize the configuration by an inversion such that~$Q_u$ and~$R^2$ become:
  \begin{enumerate}
  \item
    \label{enum:++}
    two planes through a common line, if $Q_u \cap R^2$ is a circle,
  \item
    \label{enum:+-}
    two concentric spheres, if $Q_u \cap R^2$ is empty, or
  \item
    \label{enum:+0}
    two parallel planes, if $Q_u \cap R^2$ is a point.
  \end{enumerate}
  In case~\eqref{enum:++}, normalization yields two planes $P^u$ and $P^2$ through a common line~$\ell=P^u \cap P^2$.
  (The images of the curve and the circular arc after normalization will be denoted~$p_0$ and $q_0$.)
  Since after the normalization the planes~$P^u$ and~$P^2$ are (still) orthogonal to the circular arc~$q_0$, the axis~$\ell$ passes through the center of the circle defined by~$q_0$.
  By rotating the curve~$p_0$ along~$q_0$ around the axis~$\ell$ we obtain the M\"obius translation surface adapted to the normalized quadrilateral.
  By similar arguments, cases~\eqref{enum:+-} and~\eqref{enum:+0} yield a cone and a cylinder, respectively.
  
  Reversing the normalization yields the desired surface patch.
  
\end{proof}

The above Lemma exhibits an asymmetry between the $u$- and the $v$-direction due to the asymmetry of the M\"obius translational surface patches.
In the following we will focus on special M\"obius translational surface patches given by two circular arcs.

\begin{corollary}
  \label{thm:dupin-cyclide-extension}
  Let $x:\{0,1\}^2 \to \R^3$ be a circular quadrilateral.
  Further let $p_0:[0,1] \to \R^3$ and $q_0: [0,1] \to \R^3$ be circular arcs
  with $p_0(i) = x\ind{i,0}$, $(i \in \{0,1\})$ and
  $q_0(j) = x\ind{0,j}$, $(j \in \{0,1\})$
  that intersect orthogonally at $x\ind{0,0}$.
  Then there exists a unique Dupin cyclide patch adapted to the quadrilateral~$x$.
\end{corollary}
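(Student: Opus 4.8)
The plan is to mirror the proof of \Cref{cor:supercyclides}: out of the given data I produce a sphere $Q_u$ that certifies the hypotheses of \Cref{lem:circular-adapted-patch}, apply that lemma to obtain a unique adapted M\"obius translation surface patch, and then recognise this patch as a Dupin cyclide.

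First I would promote the pointwise orthogonality of $p_0$ and $q_0$ at $x\ind{0,0}$ into the orthogonality required by \Cref{lem:circular-adapted-patch}. Let $C_p$ and $C_q$ be the circles carrying $p_0$ and $q_0$, and let $t_p, t_q$ be their tangents at $x\ind{0,0}$, so that $t_p \perp t_q$ by hypothesis. The spheres through $C_p$ form a pencil; at $x\ind{0,0}$ their tangent planes all contain $t_p$ and rotate about the axis $\R t_p$, so their normal directions sweep out the plane orthogonal to $t_p$ exactly once. Since $t_q$ lies in that plane, there is a unique sphere $Q_u$ in the pencil whose normal at $x\ind{0,0}$ is directed along $t_q$; thus $Q_u \supseteq C_p$ and $C_q$ meets $Q_u$ orthogonally at $x\ind{0,0}$. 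To see that $C_q$ is then orthogonal to $Q_u$ as a circle--sphere pair, and not merely at one point, I intersect $Q_u$ with the plane of $C_q$: this yields a circle meeting $C_q$ orthogonally at $x\ind{0,0}$, and two coplanar circles orthogonal at one intersection point are orthogonal at the other. Hence $C_q \perp Q_u$, the arc $p_0$ is a circular (in particular spherical) curve on $Q_u$, and $q_0$ is a circular arc orthogonal to $Q_u$.

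Now I would apply \Cref{lem:circular-adapted-patch} to $p_0$, $q_0$ and $Q_u$ to obtain a unique M\"obius translation surface patch $f:[0,1]^2 \to \R^3$ adapted to the quadrilateral with $f(u,0) = p_0(u)$ and $f(0,v) = q_0(v)$. It remains to check that $f$ is a Dupin cyclide patch. By \Cref{thm:multi-circular-smooth} the patch $f$ is parametrized along curvature lines and, after the normalizing inversion used in the proof of \Cref{lem:circular-adapted-patch}, becomes a surface of revolution, a cone, or a cylinder obtained by rotating or translating the profile $p_0$. Since $p_0$ is circular and M\"obius transformations preserve circles, every $u$-curve of the normalized patch is a congruent copy of a circle and every $v$-curve is a parallel (circle) of the generated surface; thus both families of curvature lines are circular. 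A surface whose curvature lines are all circles is a Dupin cyclide, and this class is M\"obius invariant, so undoing the normalization exhibits $f$ as a Dupin cyclide patch. (Equivalently, rotating, translating, or scaling a circle yields a torus, circular cone, or circular cylinder, whose M\"obius images are exactly the Dupin cyclides.)

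Finally, uniqueness follows as in \Cref{cor:supercyclides}: any Dupin cyclide patch $g$ adapted to $x$ with $g(u,0) = p_0$ and $g(0,v) = q_0$ has circular curvature-line rectangles, hence is a M\"obius translation surface patch by \Cref{thm:multi-circular-smooth}; it satisfies the hypotheses of \Cref{lem:circular-adapted-patch} with the same sphere $Q_u$ constructed above, and the uniqueness clause of that lemma forces $g = f$. I expect the main obstacle to be the first step -- upgrading pointwise orthogonality at $x\ind{0,0}$ to the circle--sphere orthogonality demanded by \Cref{lem:circular-adapted-patch}, together with the pencil argument pinning down $Q_u$ uniquely; once the lemma applies, identifying the patch as a Dupin cyclide is the classical fact that a surface with all curvature lines circular is a cyclide. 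Alternatively, one can invoke the construction of adapted Dupin cyclide patches in~\cite{BobenkoHuhnenVenedey:Cyclides} and match it to $f$ by the uniqueness of \Cref{lem:circular-adapted-patch}, in direct analogy with \Cref{cor:supercyclides}.
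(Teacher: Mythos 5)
Your proposal is correct, but it identifies the patch as a Dupin cyclide by a genuinely different argument than the paper. Like you, the paper first isolates the sphere $Q_u$ (and, symmetrically, $Q_v$) in the pencil through each arc that is orthogonal to the other arc, and it settles uniqueness by \Cref{lem:circular-adapted-patch}; but in between it does \emph{not} run the three normalization cases of that lemma. Instead it brings in the Laplace spheres $R^1,R^2$ of \Cref{lem:ortho-spheres}, shows --- by sending $x_{0,0}$ to infinity and arguing in the plane spanned by the line carrying $p_0$ and the point $x_{0,1}$ --- that at least one of $Q_u\cap R^2$ or $Q_v\cap R^1$ is nonempty, and then maps a point of that intersection to infinity, so that the adapted patch becomes a surface of revolution whose meridian is a M\"obius image of a circular arc, hence a torus patch; this reduction to the single rotational case also lets the paper identify the result with the cyclide patches of \cite{BobenkoHuhnenVenedey:Cyclides}. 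You instead apply \Cref{lem:circular-adapted-patch} directly and verify in each of its three normal forms (rotation, dilation, translation) that the circular profile $p_0\subset Q_u$ sweeps out a Dupin cyclide. Both routes are sound: the paper's detour buys one uniform case at the price of the intersection argument, while your route avoids that argument but must handle the case analysis carefully --- and the cone case is precisely where care is needed, since scaling a circle from an arbitrary apex gives an \emph{oblique} circular cone, which is not a Dupin cyclide; what saves your argument is that the apex is the common center of the normalized concentric spheres and $p_0$ lies on $Q_u$, so the apex is equidistant from the profile circle and the cone is a cone of revolution (likewise, in that case the $v$-curves are straight generators, i.e.\ circles only in the M\"obius sense, and the swept copies of $p_0$ are similar rather than ``congruent''). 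Finally, your opening step --- pinning down $Q_u$ in the pencil through $C_p$ and upgrading orthogonality at $x_{0,0}$ to circle--sphere orthogonality --- is carried out in more detail than in the paper, which merely asserts that each pencil contains exactly one sphere orthogonal to the other arc; note that your coplanar-circles argument implicitly uses that the center of $Q_u$ lies in the plane of $C_q$, so that $Q_u$ meets that plane in a great circle.
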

\begin{proof}
  The two circular arcs define two one parameter families (pencils) of spheres containing the arcs.
  In the pencil through $p_0$ (resp. $q_0$) there exists exactly one sphere~$Q_u$ (resp.\ $Q_v$) that is orthogonal to the other circular arc.
  By Lemma~\ref{lem:ortho-spheres} there exist two spheres~$R^1$ and~$R^2$ centered at the Laplace points~$y^1$ and $y^2$ with orthogonal intersection.
  We will show that at least one of the spheres $Q_u$ or $Q_v$ intersects the corresponding Laplace sphere~$R^2$ or $R^1$.
  The intersection will allow us to construct a M\"obius transformation of a rotational surface adapted to the circular quadrilateral, which is the unique Dupin cyclide.
  
  Mapping the point~$x\ind{0,0}$ to infinity by a M\"obius transformation, the spheres~$Q_u$ and $Q_v$ become planes that intersect orthogonally and the circumcircle of the quadrilateral becomes a line intersecting these planes in~$x\ind{1,0}$ and~$x\ind{0,1}$, respectively.
  Additionally, the centers of the spheres~$R^1$ and~$R^2$ become $x\ind{1,0}$ and $x\ind{0,1}$, respectively, and the circles containing~$p_0$ resp.\ $q_0$ become lines through~$x\ind{1,0}$ resp.\ $x\ind{0,1}$.
  In the plane spanned by the line~$p_0$ and the point~$x\ind{0,1}$ we see that one of the Laplace spheres~$R^2$ or~$R^1$ intersects the corresponding plane~$Q_u$ or~$Q_v$.

  Without loss of generality, we assume that~$R^2 \cap Q_u \neq \emptyset$.
  Mapping a point on the intersection to infinity allows us to construct a rotational surface patch adapted to the circular quadrilateral.
  Since the generating meridian is the M\"obius transform of a circular arc the resulting patch of the surface of revolution is a piece of a Dupin cyclide.
  
  This patch is unique by~\Cref{lem:circular-adapted-patch} and is exactly the Dupin cyclide patch used in~\cite{BobenkoHuhnenVenedey:Cyclides}.
\end{proof}

Now consider adapted surface patches of two neighboring quadrilaterals.
To have a continuous join, the two adapted surface patches need to share a common boundary spherical curve.
For each of the patches, there exists a sphere containing the boundary curve, that intersects the resp.\ patch orthogonally.
If the spheres of the two patches coincide, then the two curves even share the same tangent planes.

This allows us to construct smooth extensions of circular nets from Cauchy data.

\begin{figure}[t]
  \includegraphics[width=.4\linewidth]{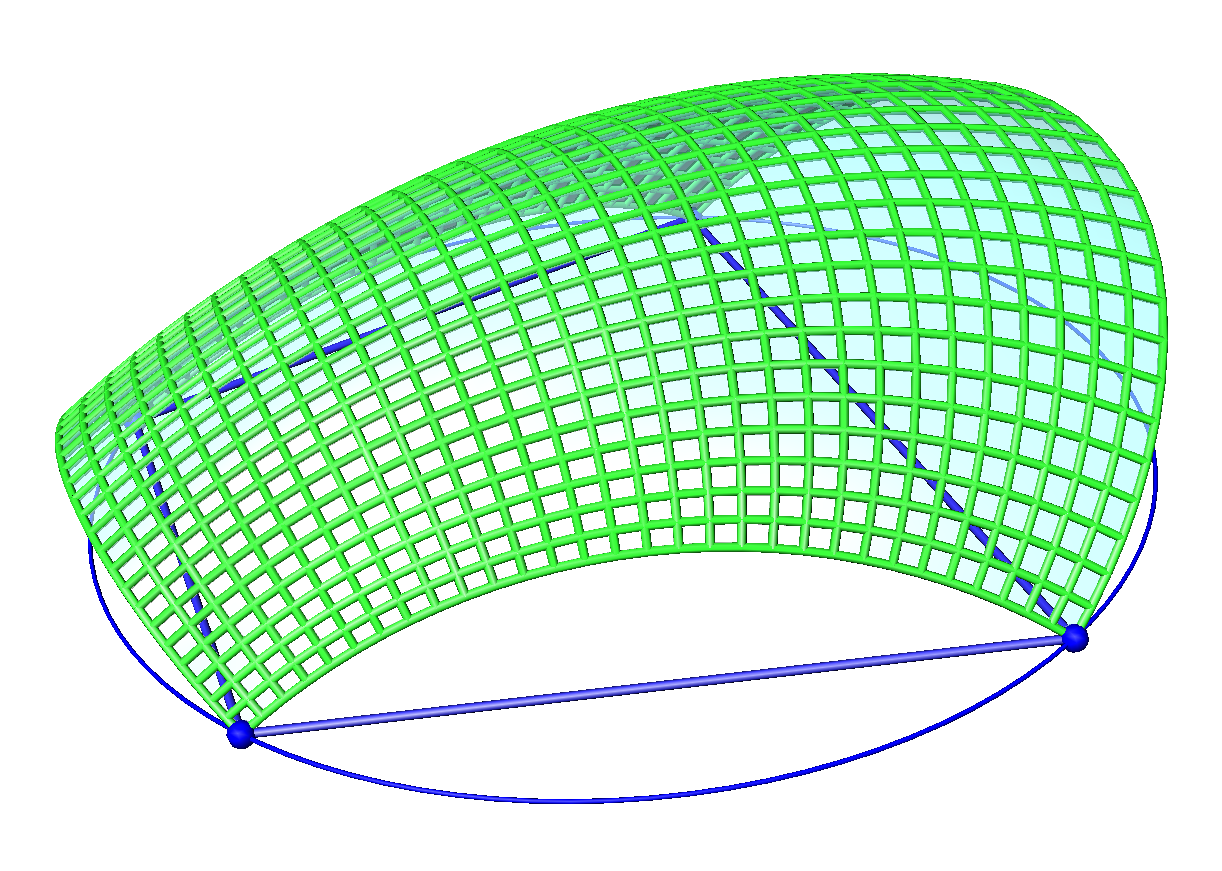}
  \caption{A Dupin cyclide patch adapted to a circular quadrilateral. All the curvature lines are circular arcs. The circular arcs bounding the patch are related by inversions in spheres orthogonal to the circumcircle of the quadrilateral.}
  \label{fig:dupin-cyclide}
\end{figure}

\paragraph{Cauchy data for smooth extension of circular nets}

Given a circular net with $\mathcal{C}^1$-circular arc splines along the coordinate axes that intersect orthogonally at the origin.
Then we can construct a unique piecewise smooth extension by Dupin cyclide patches.

Every circular quadrilateral gives rise to two reflections by~\Cref{lem:ortho-spheres} that may be used to propagate the circular arc splines to all edges of the entire circular net.

\emph{Claim:} The~$\mathcal{C}^1$-condition at the vertices is preserved by the propagation of the circular arc splines:
\begin{proof}
Without loss of generality, consider two circular arcs of the first direction of neighboring quadrilaterals with common tangent at the common vertex.
Then there exists a unique sphere through the two common vertices which is orthogonal to this common tangent.
The reflections in the two Laplace spheres of the second direction are orthogonal to this sphere.
Hence the reflected circular arcs are both orthogonal to this sphere and their tangents pass through its center.
So the tangents have to coincide and the reflected arcs join nicely.
\end{proof}

With all circular arcs associated with the edges of the net we can construct the patches for all the quadrilaterals by~\Cref{thm:dupin-cyclide-extension}.
The intersections of the tangents to opposite circular arcs are the centers of the orthogonal spheres containing the boundary curves.
As remarked above, this implies, that the neighboring patches have the same tangent planes along the common boundary curve.
Hence all the patches join nicely and we obtain a piecewise smooth cyclidic net.


\section{Conical nets}
\label{sec:conicalnets}

As in the previous discussion of multi-circular nets we start with describing properties of multi-conical nets in $\R^3$ first.



    

\begin{definition}
  \label{def:conicalnet}
  A 2-dimensional \emph{discrete conical net} is a discrete $Q^*$-net
  $P: \Z^2 \to \{\text{planes in }\R^3\}$
  such that the four planes of an elementary quadrilateral are tangent to a common cone of revolution.
\end{definition}

The normals of the tangent planes of a cone of revolution lie on a circle.
Hence a $Q^*$-net~$P$ is conical if and only if the normals of the planes of~$P$, that is, the Gauss map of~$P$, is a circular net in~$\Sphere^2$.

\begin{definition}
  \label{def:multiconicalnet}
  A 2-dimensional \emph{multi-conical net} is a map
  $P: \Z^2 \to \{\text{planes in }\R^3\}$
  such the four planes
  $P\ind{i_0, j_0}, P\ind{i_1,j_0},  P\ind{i_1,j_1}, P\ind{i_0,j_1}$
  are tangent to a cone of revolution for all $i_0, i_1, j_0, j_1 \in \Z$.
\end{definition}

With the above observation on the circularity of the Gauss map of a conical net this yields a nice connection to multi-circular nets.

\begin{lemma}
  \label{lem:gauss-map-multiconical}
  Let $P: \Z^2 \to \{\text{planes in }\R^3\}$ be a 2-dimensional multi $Q^*$-net.
  Then~$P$ is a multi-conical net if and only if
  the Gauss map of~$P$ is a multi-circular net in~$\Sphere^2$.
\end{lemma}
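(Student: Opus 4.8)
The plan is to reduce the multi-net statement to the elementary equivalence already recorded just before \Cref{def:multiconicalnet}---that the normals of the tangent planes of a cone of revolution lie on a circle of $\Sphere^2$---and to apply it separately to each parameter rectangle, using the multi-$Q^*$ hypothesis to supply the concurrency that the elementary argument requires. Writing $n\ind{i,j} \in \Sphere^2$ for the Gauss image (a coherently oriented unit normal) of the plane $P\ind{i,j}$, the whole proof then amounts to matching up the two ``quantifier over all rectangles'' conditions coming from \Cref{def:multiconicalnet} and \Cref{defn:multi-circular-net}.

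First I would isolate the pointwise statement: for four \emph{concurrent} planes meeting in a point $p$, they are tangent to a common cone of revolution if and only if their unit normals lie on a common circle in $\Sphere^2$. The forward implication is the cited fact, since the tangent planes of a cone of revolution all pass through its apex and their normals make a fixed angle with the axis, hence sweep out a small circle. For the converse, a circle $C \subset \Sphere^2$ is cut out by an affine plane, so the four normals make a common angle $\beta$ with a fixed axis direction $a$; together with concurrency at $p$ this identifies the four planes as the tangent planes through $p$ of the cone of revolution with apex $p$, axis $a$, and half-angle determined by $\beta$. (A great circle $C$ yields the degenerate cone, i.e. a cylinder, consistently with the degenerate cases already allowed in the paper.) Note the asymmetry: cone-tangency forces concurrency at the apex for free, whereas the converse genuinely needs concurrency as an extra hypothesis---and this is precisely what the multi-$Q^*$ assumption contributes.

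Next I would feed in that $P$ is a multi-$Q^*$-net: by \Cref{def:multiqstar} the four planes $P\ind{i_0,j_0}, P\ind{i_1,j_0}, P\ind{i_1,j_1}, P\ind{i_0,j_1}$ are concurrent for every $i_0 \neq i_1$ and $j_0 \neq j_1$. Hence the pointwise equivalence applies to every parameter rectangle with its concurrency hypothesis automatically met, giving: the four planes of the rectangle are tangent to a common cone of revolution if and only if the four normals $n\ind{i_0,j_0}, n\ind{i_0,j_1}, n\ind{i_1,j_1}, n\ind{i_1,j_0}$ are concyclic. Quantifying over all rectangles, $P$ is multi-conical in the sense of \Cref{def:multiconicalnet} exactly when its Gauss map is multi-circular in the sense of \Cref{defn:multi-circular-net}, which is the claim.

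I expect the only delicate point to be the converse half of the pointwise statement together with the orientation of the Gauss map: concyclicity of the normals is only the correct surrogate for cone-tangency once a coherent (for instance continuous) choice of unit normal is fixed along the net, since flipping individual normals destroys the circle even for genuinely cone-tangent planes. With such a choice a circle of normals selects a single cone, after which the rectangle-wise matching is purely formal.
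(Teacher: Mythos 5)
Your proposal is correct and follows essentially the same route as the paper, which states the lemma without proof as an immediate consequence of the observation preceding \Cref{def:multiconicalnet} that the normals of the tangent planes of a cone of revolution lie on a circle, applied rectangle by rectangle. Your explicit treatment of the converse of the pointwise statement --- and the remark that the concurrency needed there is exactly what the multi-$Q^*$ hypothesis supplies, with coherent orientation of the normals fixed --- merely makes precise what the paper leaves implicit.
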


This allows us to use a classification similar to the one of multi-circular nets of \Cref{thm:circular} to classify the Gauss maps of multi-conical nets.

\begin{theorem}[Gau{\ss} maps of multi-conical nets]
  \label{thm:multi-circular-s2}
  Let $x:\Z^2 \to \Sphere^2$ be a multi-circular net.
  Then it is a M\"obius transform of one of the following:
  \begin{itemize}
  \item 
    a symmetric strip with respect to the equator,
  \item  
    a net of revolution, or
  \item  
    the inverse stereographic projection of an orthogonal grid.
  \end{itemize}
\end{theorem}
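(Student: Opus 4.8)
The plan is to mirror the proof of \Cref{thm:circular}, but one dimension lower. First I would lift the multi-circular net $x: \Z^2 \to \Sphere^2$ to a multi-Q-net in the M\"obius quadric $\quadric \subset \RP^3$ of $\Sphere^2$, modeled on $\R^{3,1}$ with signature $({+}{+}{+}{-})$, where points outside $\quadric$ represent circles on $\Sphere^2$ and two circles meet orthogonally exactly when the representing vectors are polar. By \Cref{prop:projective-translation-in-quadric} the two Laplace transforms then span polar subspaces $U^1 = \lspan{\hc y^1_i \mid i \in \Z}$ and $U^2 = \lspan{\hc y^2_j \mid j \in \Z}$, and by \Cref{thm:reflections-translation-net} the whole net is obtained from an initial point by the inversions in the circles represented by the Laplace points. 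As in \Cref{thm:circular}, admitting only embedded circular quadrilaterals forces every Laplace point outside $\quadric$, so both $U^1$ and $U^2$ contain vectors of signature $({+})$.

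The decisive structural input is that $\dim \R^{3,1} = 4$ together with $U^1 \perp U^2$ gives $\dim U^1 + \dim U^2 \le 4$, so $\min(\dim U^1, \dim U^2) \le 2$; moreover, if both subspaces are two-dimensional then necessarily $U^2 = (U^1)^\perp$. I would then split into cases by signature, exactly as in \Cref{thm:circular} but now exploiting that the polar of a plane is again a plane. If one subspace, say $U^1$, is one-dimensional, then one family of parameter polylines comes from a single inversion, so the two bounding polylines are mirror images in one circle; a M\"obius transformation normalizes that circle to the equator and yields a \emph{symmetric strip}. If both subspaces are planes, the identity $U^2 = (U^1)^\perp$ rigidly pairs their signatures, since $({+}{+})$ is polar to $({+}{-})$ and $({+}0)$ is self-polar. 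The pair $\{({+}{+}), ({+}{-})\}$ describes a hyperbolic pencil of circles orthogonal to an elliptic pencil through two common points; normalizing the two base points to a pair of poles turns the generating inversions into rotations, so $x$ is M\"obius equivalent to a \emph{net of revolution}. The pair $\{({+}0), ({+}0)\}$ describes two orthogonal parabolic pencils tangent at a common point; sending that point to infinity turns both pencils into families of parallel lines, so $x$ is the \emph{inverse stereographic projection of an orthogonal grid}.

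The main obstacle I anticipate is keeping the signature--geometry dictionary straight one dimension down: unlike the $\Sphere^3$ situation, where the surface type was read off from a single plane whose polar lived in a three-dimensional complement, here the plane and its polar are both planes with rigidly linked signatures. I must therefore verify that each admissible pair is genuinely realizable by circles of signature $({+})$ in both directions, and that no further pair such as $\{({+}{+}), ({+}{+})\}$ can occur under the polarity constraint. Once the classification of polar pairs is pinned down, the remaining checks that the normalized configurations really are a symmetric strip, a net of revolution, and an orthogonal grid are the direct two-dimensional analogs of the elementary arguments in \Cref{thm:circular,thm:multi-circular-smooth}, and I expect them to be routine.
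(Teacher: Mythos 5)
Your proposal is correct and follows essentially the same route as the paper: the same lift of the net to a multi-Q-net in the M\"obius quadric of $\Sphere^2$ in $\R^{3,1}$, the same appeal to \Cref{prop:projective-translation-in-quadric} and \Cref{thm:reflections-translation-net} for the polar Laplace subspaces and generating inversions, and the identical signature case analysis (a one-dimensional subspace giving the symmetric strip, the polar pair $({+}{+})$/$({+}{-})$ giving the net of revolution, and the pair of $({+}0)$ planes giving the inverse stereographic projection of an orthogonal grid). The only quibble is terminological: a $({+}0)$ plane in $\R^{3,1}$ is not literally self-polar, but its polar is another $({+}0)$ plane containing the same isotropic direction, which is exactly the fact your tangency-at-a-common-point argument uses.
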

\begin{proof}
  As in the proof of \Cref{thm:circular} we classify the nets by different decompositions of~$\R^{3,1}$ into polar subspaces in the projective model of M\"obius geometry.

  If one of the spaces has dimension one then we may normalize the corresponding circle to be the equator.
  Then the net is symmetric with respect to this equator and the odd resp.\ even vertices of the net coincide in the corresponding direction.

  Assume that both spaces of the polar decomposition have (a least) dimension two. Then we obtain the following cases:
  \begin{itemize}
  \item 
    The signatures of the polar subspaces~$U_1$ and $U_2$ are $({+}{+})$ and $({+}{-})$, respectively.
    The circles corresponding to the points in $U_1$ all pass through two points on the sphere.
    We can normalize this family such that the common points of the circles are antipodal points on~$\Sphere^2$.
    Then the circles in the corresponding pencil become great circles through the antipodal points and the circles of the other become horizontal parallel circles.
    Thus the net generated by the inversions in the circles of the two families is a rotational net.
  \item 
    If the signatures of both of the polar subspaces is $({+}{0})$, then all circles pass through one point~$N$.
    Additionally, the circles of the families are touching a pair of orthogonal lines in this point.
    By a stereographic projection from~$N$ the two families of circles become families of orthogonal lines in the plane. 
    Hence the net is the image of an inverse stereographic projection of an orthogonal net in the plane. 
  \end{itemize}
\end{proof}

If we polarize a (multi-)Q net with respect to the sphere~$\Sphere^2$ we obtain a $Q^*$-net.
In particular, if we polarize a multi-circular net contained in the sphere we obtain a multi-conical net~$P$ with planes tangent to the sphere.
We will call a multi-conical net whose planes are tangent to the sphere a \emph{spherical multi-conical net}, see~\Cref{fig:multi-conical-construction} left.

By using the above classification of multi-circular nets in~$\Sphere^2$ we obtain the following classification of multi-conical nets.

\begin{theorem}
  \label{thm:conical}
  Let $P: \Z^2 \to \{\text{planes in } \R^3\}$ be a $Q^*$-net.
  Then $P$ is a multi-conical net if and only if it is parallel to a spherical multi-conical net.
  Furthermore, the spherical multi-conical net is the polar of its Gauss map.
\end{theorem}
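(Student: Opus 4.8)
The plan is to prove \Cref{thm:conical} by reducing it to the already-established classification of Gauss maps via \Cref{lem:gauss-map-multiconical} and \Cref{thm:multi-circular-s2}, together with the notion of parallel $Q^*$-nets. First I would recall that two $Q^*$-nets are \emph{parallel} (or \emph{Combescure related}) if corresponding planes have the same normal direction, i.e. they share the same Gauss map $\nu: \Z^2 \to \Sphere^2$; the planes differ only by their signed distances from the origin. The content of the theorem is then twofold: (a) a $Q^*$-net $P$ is multi-conical if and only if its Gauss map is multi-circular, which is exactly \Cref{lem:gauss-map-multiconical}; and (b) among all $Q^*$-nets sharing a fixed multi-circular Gauss map, there is a distinguished \emph{spherical} representative, obtained by polarizing the Gauss map with respect to $\Sphere^2$, and every other representative with the same Gauss map is parallel to it.

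The main work is in step (b). I would argue as follows. Given the multi-circular Gauss map $\nu \subset \Sphere^2$, polarity with respect to the unit sphere sends each point $\nu\ind{i,j} \in \Sphere^2$ to its polar plane $\nu\ind{i,j}^\perp$, which is precisely the tangent plane of $\Sphere^2$ at $\nu\ind{i,j}$ (since points on the sphere are self-polar up to the tangent plane through them). These tangent planes have outward normal $\nu\ind{i,j}$, so they form a $Q^*$-net with exactly the prescribed Gauss map; call it $P^{\mathrm{sph}}$. Because polarity is a projective duality that preserves incidence, and because $\nu$ is multi-circular, the polar planes of any four vertices $\nu\ind{i_0,j_0}, \nu\ind{i_1,j_0}, \nu\ind{i_1,j_1}, \nu\ind{i_0,j_1}$ lying on a common circle are tangent to the cone of revolution polar-dual to that circle; hence $P^{\mathrm{sph}}$ is a spherical multi-conical net. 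This gives the ``furthermore'' clause directly and exhibits a concrete spherical representative.

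It remains to show the ``if and only if'' in terms of parallelism. For the easy direction, if $P$ is parallel to a spherical multi-conical net $P^{\mathrm{sph}}$ then $P$ and $P^{\mathrm{sph}}$ share the same Gauss map, which by the construction above is multi-circular; then \Cref{lem:gauss-map-multiconical} immediately gives that $P$ is multi-conical. For the converse, suppose $P$ is multi-conical. By \Cref{lem:gauss-map-multiconical} its Gauss map $\nu$ is a multi-circular net in $\Sphere^2$; form the spherical representative $P^{\mathrm{sph}}$ as above. Since $P$ and $P^{\mathrm{sph}}$ have the same Gauss map by construction, corresponding planes are parallel, so $P$ is parallel to the spherical multi-conical net $P^{\mathrm{sph}}$, as claimed.

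The step I expect to require the most care is verifying that polarity in $\Sphere^2$ genuinely converts the circularity of a quadrilateral of Gauss points into the cone-of-revolution tangency of the dual planes, i.e. that a circle on $\Sphere^2$ polarizes to a cone of revolution tangent to the four planes. The cleanest way to see this is via the remark preceding \Cref{def:multiconicalnet}: four planes are tangent to a common cone of revolution precisely when their unit normals lie on a common circle in $\Sphere^2$. Since the planes of $P^{\mathrm{sph}}$ have normals $\nu\ind{i,j}$ by construction, and these lie on a common circle exactly because $\nu$ is multi-circular, the tangency to a cone of revolution is automatic and no further computation of the polar cone is needed. This reduces the only potentially delicate point to the normal-circle criterion already recorded in the paper, and the theorem follows.
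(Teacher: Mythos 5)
Your overall strategy coincides with the paper's: characterize multi-conical nets by their Gauss maps via \Cref{lem:gauss-map-multiconical}, obtain the spherical representative by polarizing the multi-circular Gauss map in $\Sphere^2$, and transfer the property along parallelism. The forward direction and the ``furthermore'' clause are handled correctly; in particular your observation that the tangent planes of $\Sphere^2$ at four concircular points all pass through the pole of the circle's plane and are tangent to the tangent cone from that pole is exactly what makes $P^{\mathrm{sph}}$ a spherical multi-conical net.

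There is, however, a genuine gap in your ``if'' direction. You apply \Cref{lem:gauss-map-multiconical} to $P$, but that lemma's hypothesis is that $P$ is a \emph{multi}-$Q^*$-net, whereas the theorem only assumes $P$ is a $Q^*$-net. This hypothesis is not decorative: four planes whose unit normals lie on a common circle of $\Sphere^2$ are tangent to a common cone of revolution if and only if they are, in addition, concurrent (the apex of the cone must be a common point of all four planes; with axis $a$ and normals $n_k$ on the circle, tangency to a cone with apex $v$ reads $\langle n_k, v\rangle = d_k$ for all $k$, which is exactly concurrency). A generic $Q^*$-net with the right Gauss map therefore need not be multi-conical, and your sentence ``then \Cref{lem:gauss-map-multiconical} immediately gives that $P$ is multi-conical'' does not follow as written. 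The paper closes precisely this hole: since corresponding planes of $P$ and of the spherical net $S$ are parallel, the edges $P\ind{i,j}\cap P\ind{i+1,j}$ are parallel to the corresponding edges of $S$; the parameter polygons of $S$ are planar (as $S$ is a multi-$Q^*$-net, cf.\ \Cref{thm:coplanar_edges}), hence so are those of $P$, so $P$ is a multi-$Q^*$-net and only then does the Gauss-map lemma apply. You need to add this transfer of the multi-$Q^*$ property along parallelism to make the converse direction complete.
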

\begin{proof}
  If $P$ is a multi-conical net, then its Gauss map is a multi-circular net in~$\Sphere^2$ and its polar is a spherical multi-conical net parallel to the original net.
  
  For the converse, let $S$ be a spherical multi-conical net.
  A multi-conical net is a special kind of multi-$Q^*$-net and hence all parameter polygons are planar.
  Now let $P$ be a parallel mesh of~$S$.
  Since the planes of $P$ are parallel to the corresponding planes of~$S$, the edges of~$P$ are also parallel to the edges of~$S$.
  But the parameter polygons of $S$ are planar.
  Hence the parameter polygons of~$P$ are also planar and~$P$ is a multi-$Q^*$-net.
  Additionally, the Gauss maps of~$P$ and~$S$ coincide.
  But as~$S$ is a multi-conical net, its Gauss map is a multi-circular net and hence the parallel net~$P$ (with the same Gauss map) is multi-conical.
  See~\Cref{fig:multi-conical-construction} for an illustration.
\end{proof}

\begin{figure}[tb]
  \includegraphics[width=.8\linewidth]{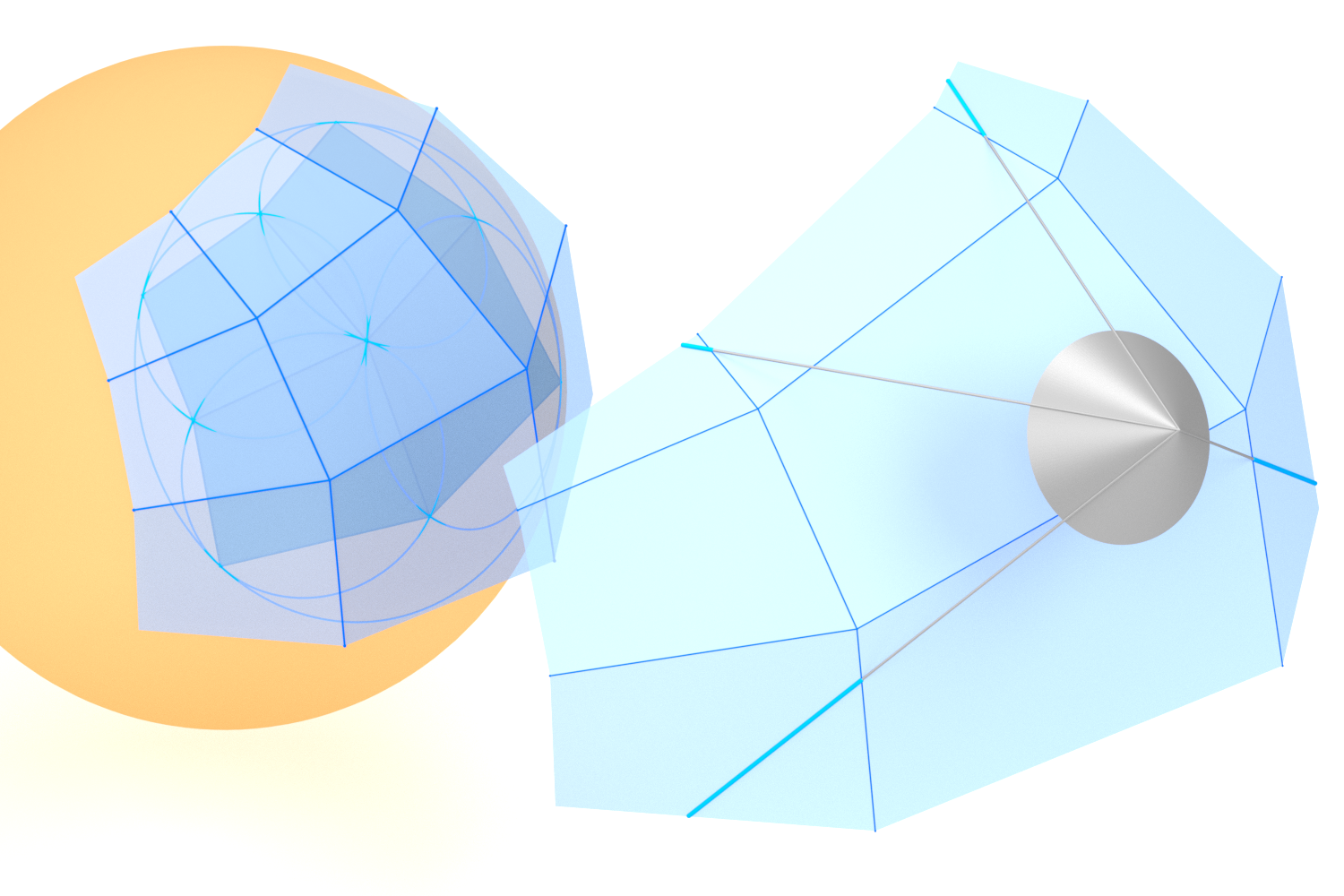}
  \caption{A multi-conical net (right) is a parallel net to a spherical multi-conical net (left). The polar net of a spherical multi-conical net is a multi-circular net (left). The four corner faces of the multi-conical nets are tangent to a common cone of revolution indicated by the four lines of contact and the cone (right) and the big circle of the spherical multi-circular net (left).}
  \label{fig:multi-conical-construction}
\end{figure}

\begin{remark}[Projective geometry perspective]
  Multi-conical nets can also be considered as multi-Q-nets in the Blaschke cylinder model~$\Sphere^2 \times \R$ of Laguerre geometry.
  Then they can be characterized and constructed using the general approach presented in \Cref{sec:trans_in_quadric}.
  In particular, we can use Laguerre reflections and one initial oriented plane to generate arbitrary multi-conical nets as described in \Cref{thm:reflections-translation-net}.
\end{remark}

To complete the picture, we characterize surface patches parametrized by a multi-conical net.
Consider a surface patch $x:[0,1]^2 \to \R^3$ with Gau{\ss} map $\hc n:[0,1]^2 \to \R^{3,1}$ in homogeneous coordinates, i.e., 
$\bilform{\hc n}{\hc n}_{3,1} = {\hc n_1}^2 + {\hc n_2}^2 + {\hc n_3}^2 - {\hc n_4}^2 = 0$.
Further let the tangent planes be given by the equations ${\hc n_1} x_1 + {\hc n_2} x_2 + {\hc n_3} x_3 = d$.
Combining the Gau{\ss} map~$\hc n$ and the distance~$d$, we obtain \emph{homogeneous tangent plane coordinates} $\hc t: [0,1]^2 \to \R^5$ with $\hc t = (\hc n, d) \in \R^5$.
(This is actually the description of the surface in terms of tangent planes in the Blaschke cylinder model of Laguerre geometry. Oriented planes correspond to points in the degenerate quadric associated to the bilinear form $\bilform{(\hc n, d)}{(\hc n, d)}_{3,1,1} = {\hc n_1}^2 + {\hc n_2}^2 + {\hc n_3}^2 - {\hc n_4}^2$ in $\R^{3,1,1}$.)

\begin{theorem}
  \label{thm:multi-conical-smooth}
  Let $x: [0,1]^2 \to \R^3$ be a parametrized surface patch such that the tangent planes of every parameter quadrilateral intersect in a point.
  Then the patch is parametrized along planar principal curvature lines.
  
  In particular, the Gau{\ss} map of the parameter lines consists of orthogonally intersecting circles and the homogeneous tangent plane coordinates form a projective translation surface in the quadric in~$\R^{3,1,1}$, i.e., $\hc t = (\hc p_1(u) + \hc p_2(v), d_1(u) + d_2(v))$.
\end{theorem}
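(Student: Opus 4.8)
The plan is to transfer the argument of \Cref{thm:multi-circular-smooth} from the surface points to its tangent planes. First I would sample $x$ along its parameter lines and record the tangent planes at the sampled vertices. The hypothesis that the tangent planes of every parameter quadrilateral meet in a point makes this a discrete multi-$Q^*$-net, and the Euclidean fact that the four planes of each coordinate rectangle envelope a common cone of revolution promotes it to a multi-conical net in the sense of \Cref{def:multiconicalnet}. By \Cref{lem:gauss-map-multiconical} its Gau{\ss} map is then a multi-circular net in $\Sphere^2$, so \Cref{thm:multi-circular-s2} applies and exhibits the Gau{\ss} map as a M\"obius image of a symmetric strip, a net of revolution, or the inverse stereographic image of an orthogonal grid. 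In each case the Gau{\ss} map carries two families of circles meeting orthogonally, together with two commuting families of sphere inversions exchanging neighbouring coordinate polygons.

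I would then pass to the smooth limit exactly as in the proof of \Cref{thm:multi-circular-smooth}. Refining the sampling, the two orthogonal families of inversion spheres and the orthogonality of the $u$- and $v$-spheres persist in the limit, so the smooth Gau{\ss} map $\hc n:[0,1]^2\to\Sphere^2$ consists of orthogonally intersecting circles. Since the tangent planes form a smooth multi-$Q^*$-net, the smooth analogue of \Cref{thm:coplanar_edges} forces the parameter curves of $x$ to be planar, while the infinitesimal concurrency of tangent planes makes the parametrisation conjugate. A conjugate net whose spherical normal image is an orthogonal circle net is a curvature-line net, so $x$ is parametrised along planar principal curvature lines, which is the first assertion.

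For the ``in particular'' part I would work in the Blaschke cylinder of Laguerre geometry. Lifting the tangent planes to homogeneous tangent plane coordinates $\hc t=(\hc n,d)$, they lie on the degenerate quadric $\{\bilform{\hc n}{\hc n}_{3,1}=0\}$ in $\R^{3,1,1}$, and, as recorded in the remark preceding the theorem, the multi-conical property is exactly the statement that $\hc t$ is a multi-Q-net in this quadric. By the equivalence of \eqref{item:multi-q-net} and \eqref{item:translation} in \Cref{thm:multiqnets} such a net is a projective translation net, giving $\hc t=(\hc p_1(u)+\hc p_2(v),\,d_1(u)+d_2(v))$; and \Cref{prop:projective-translation-in-quadric} then shows that the two Laplace transforms $\partial_u\hc p_1$ and $\partial_v\hc p_2$ lie in subspaces polar with respect to $\bilform{\cdot}{\cdot}_{3,1}$, which is precisely the orthogonality of the two families of Gau{\ss} circles.

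The main obstacle is the very first step. Projective concurrency of the tangent planes is only the multi-$Q^*$ condition, which is shared by supercyclidic patches that are conjugate but \emph{not} curvature-line parametrised; promoting it to the genuine conical condition, that the enveloped cones are cones of \emph{revolution} and hence that the Gau{\ss} image of every parameter quadrilateral is concyclic, is where the Euclidean metric enters through the Laguerre-geometric reading of the tangent planes. Once the Gau{\ss} map is known to be multi-circular, the remainder runs in parallel with the circular case and the quadric-translation machinery of \Cref{sec:trans_in_quadric}.
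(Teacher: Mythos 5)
Your route is the same as the paper's: sample the tangent planes to get a discrete multi-conical net, pass to the Gau{\ss} map via \Cref{lem:gauss-map-multiconical}, classify it by \Cref{thm:multi-circular-s2}, and take the smooth limit; your Blaschke-cylinder argument for the ``in particular'' clause is a reasonable expansion of the paper's bare citation of Blaschke. The problem is the step you yourself single out as ``the main obstacle'': you assert that concurrency of the four tangent planes of every parameter rectangle already forces them to envelope a common cone of revolution, and then concede you cannot justify it. This is not a small gap --- the implication is false. Four concurrent planes are tangent to a common cone of revolution if and only if their unit normals are concyclic on $\Sphere^2$, which is one scalar condition beyond concurrency. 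More tellingly, the hypothesis as stated (concurrency for all parameter rectangles, i.e., the multi-$Q^*$ property) is projectively invariant, while the conclusion (curvature-line parametrization) is not: a Dupin cyclide in curvature-line parametrization satisfies the hypothesis, hence so does its projective image, a supercyclide in its conjugate parametrization --- and the latter is not parametrized along curvature lines. So no argument can promote the multi-$Q^*$ property to the multi-conical one from the stated hypothesis alone; the Euclidean input you gesture at has to be \emph{assumed}, not derived.

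To be fair, the paper's own proof has exactly the same seam: it opens with ``consider discrete multi-conical nets sampled from the smooth surface'', i.e., it silently reads the hypothesis as the full Laguerre-geometric condition (tangency to a common cone of revolution, equivalently concyclicity of the Gau{\ss} images of every parameter rectangle) rather than mere concurrency. If you make that reading explicit, the rest of your argument goes through and coincides with the paper's: the multi-$Q^*$ property gives planar conjugate parameter lines in the limit, \Cref{lem:gauss-map-multiconical} together with the limit of the orthogonal inversion spheres gives an orthogonal circular Gau{\ss} net, parallelism of the plane of a parameter line with the plane of its Gau{\ss} circle identifies the net as a curvature-line net, and the translation structure of $\hc t$ in $\R^{3,1,1}$ follows from \Cref{thm:multiqnets} and \Cref{prop:projective-translation-in-quadric} as you describe.
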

\begin{proof}
  As in the proof of~\Cref{thm:multi-circular-smooth} consider discrete multi-conical nets sampled from the smooth surface.

  Multi-conical nets are in particular, multi-$Q^*$-nets. So if we take the limit of a multi-$Q^*$-net sampled on the surface, we observe, that the parameter lines are planar conjugate lines.
  Furthermore, sampling the Gau{\ss} map of the surface yields a multi-circular net in~$\Sphere^2$ by~\Cref{lem:gauss-map-multiconical}.  Hence the parameter lines of the Gau{\ss} map belong to orthogonally intersecting circles.
  But for each parameter line the plane containing the line and the plane containing the corresponding circle of the Gau{\ss} map are parallel.
  Hence the parameter lines are principal curvature lines.

  The special type of parametrization can for example be found in~\cite[\S 64]{Blaschke:DiffGeoIII}.
\end{proof}

A geometric description of the surfaces characterized in the above theorem is given in~\cite[\S 64]{Blaschke:DiffGeoIII}.
In the language of surface transformations these surfaces are Combescure transformations of Dupin cyclides.


\section{Discrete extensions}
\label{sec:discrete-extensions}

In this section we present a new approach to the extension of discrete nets.
Previously, we have always investigated the smooth extension of discrete nets, but here we would like to advocate a purely discrete point of view, that will lead to smooth extensions in the limit.
This extension of discrete nets by discrete multi-nets may also be interpreted 
as an interpolatory subdivision scheme. 

In contrast to previous subdivision schemes, we attach adapted multi-nets to the faces of a given net and obtain an
interpolatory subdivision scheme that preserves the structure of the underlying nets.
More precisely, the extension of Q-nets by multi-Q-nets will still be a Q-net and the extension of a circular net by a multi-circular net will produce ``finer'' circular nets.
In particular, we can refine our subdivision scheme arbitrarily and will obtain surfaces that interpolate the initial mesh.
The smoothness of the interpolating surface depends on the chosen subdivision.

The basis of the discrete extensions is a discrete analog of the smooth extension by projective translation surface patches described in \Cref{lem:unique-multi-Q-extension-4curves}.

\begin{proposition}
  \label{prop:unique-patch-4polygons}
  Let $Q \subset \RP^3$ be a planar quadrilateral with Laplace points~$y^1$ and~$y^2$ and $n \in \N$ with $n \ge 3$.
  Further let $p_0, p_1$ and $q_0, q_1$ be two pairs of polylines with $n$ vertices attached to the vertices of the quadrilateral as shown in \Cref{fig:discrete-Q-extension} such that $p_0$ and $p_1$ ($q_0$ and $q_1$) are in perspective with respect to $y^2$ (resp.\ $y^1$).
  Then there exists a unique adapted multi-Q-net $f: [0,\dots,n]^2 \to \RP^3$ such that
  \begin{itemize}
  \item
    $f(k,j) = p_j(k)$ for all $k \in [0,\dots,n]$ and $j \in \{0,1\}$, and
  \item
    $f(i,\ell) = q_i(\ell)$ for all $\ell \in [0,\dots,n]$ and $i \in \{0,1\}$.
  \end{itemize}
\end{proposition}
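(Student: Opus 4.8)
The plan is to reduce this discrete statement to the smooth Lemma~\ref{lem:unique-multi-Q-extension-4curves} by exploiting the translation-net structure established in \Cref{thm:multiqnets}. Recall that a multi-Q-net is precisely a projective translation net, so by part~\eqref{item:translation} of \Cref{thm:multiqnets} it suffices to reconstruct the two generating polygons $\hc p: \{0,\dots,n\} \to \R^4$ and $\hc q: \{0,\dots,n\} \to \R^4$ (up to the global ambiguity pinned down below) from the given data. The perspectivity hypotheses are exactly the discrete analogs of those in the smooth lemma: $p_0$ and $p_1$ are in perspective with respect to $y^2$, and $q_0$ and $q_1$ are in perspective with respect to $y^1$, which by the characterization \eqref{item:neighbor-perspective}$\Leftrightarrow$\eqref{item:multi-q-net} is exactly the condition needed for the two boundary strips to extend to a genuine multi-Q-net.

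First I would fix homogeneous coordinates exactly as in the proof of Lemma~\ref{lem:unique-multi-Q-extension-4curves}: because $q_0$ and $q_1$ are in perspective with respect to $y^1$, one may scale the homogeneous representatives of the vertices of $q_0$ and $q_1$ so that $\hc q_1(\ell) - \hc q_0(\ell) = \hc y^1$ is constant in $\ell$, and symmetrically scale $p_0,p_1$ so that $\hc p_1(k) - \hc p_0(k) = \hc y^2$ is constant in $k$. The compatibility at the four corners (both pairs of boundary polylines agree with the corners $x\ind{i,j}$ of $Q$, and $y^1,y^2$ are the Laplace points of $Q$) guarantees these two normalizations can be chosen consistently, since they both determine the same edge vectors along the initial elementary quadrilateral. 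I would then define the candidate net by
\[
  \hc f(k,\ell) = \hc p_0(k) + \hc q_0(\ell) - \hc x\ind{0,0},
\]
and set $f(k,\ell) = [\hc f(k,\ell)]$.

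It remains to verify three things. One, that $f$ has the prescribed boundary values: evaluating at $\ell \in \{0,1\}$ recovers $p_\ell(k)$ and at $k \in \{0,1\}$ recovers $q_k(\ell)$, which follows from the normalization $\hc q_1(\ell) - \hc q_0(\ell) = \hc y^1$ (so $\hc f(k,1) = \hc p_0(k) + \hc q_0(\ell) + \hc y^1 - \hc x\ind{0,0}$ matches $\hc p_1(k)$ after one checks the edge vectors agree) together with the corner conditions. Two, that $f$ is genuinely a multi-Q-net: since $\hc f(k,\ell)$ has the translation form $\hc a(k) + \hc b(\ell)$ with $\hc a(k) = \hc p_0(k) - \tfrac12\hc x\ind{0,0}$ and $\hc b(\ell) = \hc q_0(\ell) - \tfrac12 \hc x\ind{0,0}$, this is immediate from \Cref{thm:multiqnets}~\eqref{item:translation}$\Rightarrow$\eqref{item:multi-q-net}. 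Three, uniqueness: any adapted multi-Q-net with these boundary curves is again a translation net whose generating polygons are forced, up to the freedom of adding a common vector to $\hc p$ and subtracting it from $\hc q$, and this freedom is killed by the boundary data, so the projective net $f$ is unique.

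The main obstacle I expect is purely bookkeeping at the corners: one must check that the two independent scalings (one making $\hc p_1 - \hc p_0$ constant, one making $\hc q_1 - \hc q_0$ constant) are simultaneously realizable, i.e.\ that the two forced edge vectors $\hc y^1$ and $\hc y^2$ of the normalization coincide with the actual Laplace points of $Q$ and do not overdetermine the representative of $\hc x\ind{0,0}$. This is where the hypothesis $p_j(i) = q_i(j) = x\ind{i,j}$ at all four corners is essential, and it is the discrete analog of the single compatibility check carried out in Lemma~\ref{lem:unique-multi-Q-extension-4curves}; everything else is a transcription of that argument with the continuous parameters $u,v$ replaced by discrete indices $k,\ell$, so the requirement $n \ge 3$ enters only to guarantee that the perspectivity data genuinely determines the net rather than merely a single strip.
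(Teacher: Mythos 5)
Your proposal is correct and follows essentially the same route as the paper, which simply transcribes the proof of Lemma~\ref{lem:unique-multi-Q-extension-4curves} to the discrete setting by normalizing the homogeneous coordinates of the boundary polylines via the perspectivity conditions and setting $\hc f(k,\ell)=\hc p_0(k)+\hc q_0(\ell)-\hc x\ind{0,0}$. The only quibble is your closing remark on the role of $n\ge 3$, which is tangential and does not affect the argument.
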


The proof can be performed in exactly the same way as the proof of the smooth analog~\Cref{lem:unique-multi-Q-extension-4curves} by normalizing the homogeneous coordinates of the polylines at the edges.

Then the discrete extension can be used to implement a subdivision scheme in the following way:

\begin{enumerate}
\item
  Attach polygons to all the edges of a Q-net such that polygons of opposite edges of a quadrilateral are in perspective with respect to the corresponding Laplace points.
\item
  Add a multi-Q-patch for each net quadrilateral to obtain a finer net.
\end{enumerate}

\Cref{fig:discrete-Q-extension} shows two steps of extension where we added two additional vertices along each edge.

\begin{figure}[bt]
  \includegraphics[width=.32\textwidth]{images/subdivision0.png}
  \includegraphics[width=.32\textwidth]{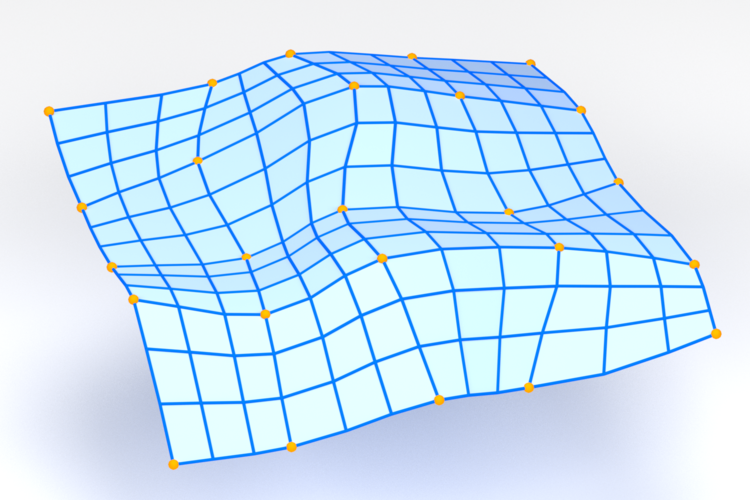}
  \includegraphics[width=.32\textwidth]{images/subdivision2.png}
  \caption{Planar quadrilateral subdivision/extension of a discrete Q-net}
  \label{fig:discrete-Q-extension}
\end{figure}

The following examples illustrate different uses of the degrees of freedom of the general subdivision.

\begin{example}
Add one additional vertex to each edge such that
\begin{itemize}
\item
  along the parameter lines the additional vertices for adjacent edges and the common vertex are collinear, and
\item
  the additional vertices of opposite edges are in perspective with respect to the corresponding Laplace points.
\end{itemize}
Then the multi-Q-nets attached to every quadrilateral are the control meshes of supercyclide surface patches. 
Hence using De Casteljau's Algorithm we obtain a supercyclidic net.
The $\mathcal{C}^1$-smoothness follows from the collinearity condition of the first and the last vertex of adjacent polylines along coordinate directions.
\end{example}

\begin{example}
We can also vary the number of vertices of the polylines attached to the edges of the base net to control the ``smoothness''.
If we subdivide the net with only a few vertices in one direction and many vertices in the other, then we obtain a net that consists of developable strips (see~\Cref{fig:conical-subd}).

\begin{figure}[t]
  \includegraphics[width=.4\textwidth]{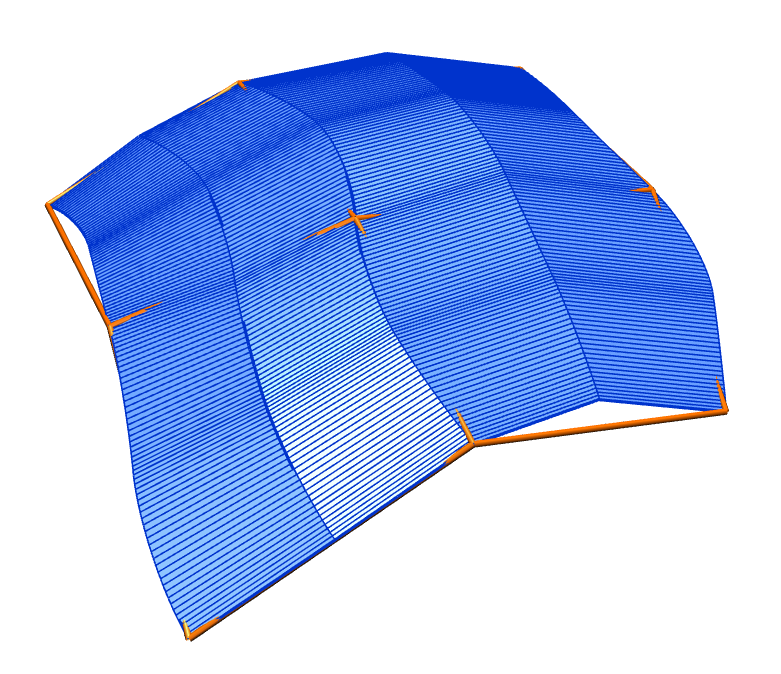}
  \includegraphics[width=.4\textwidth]{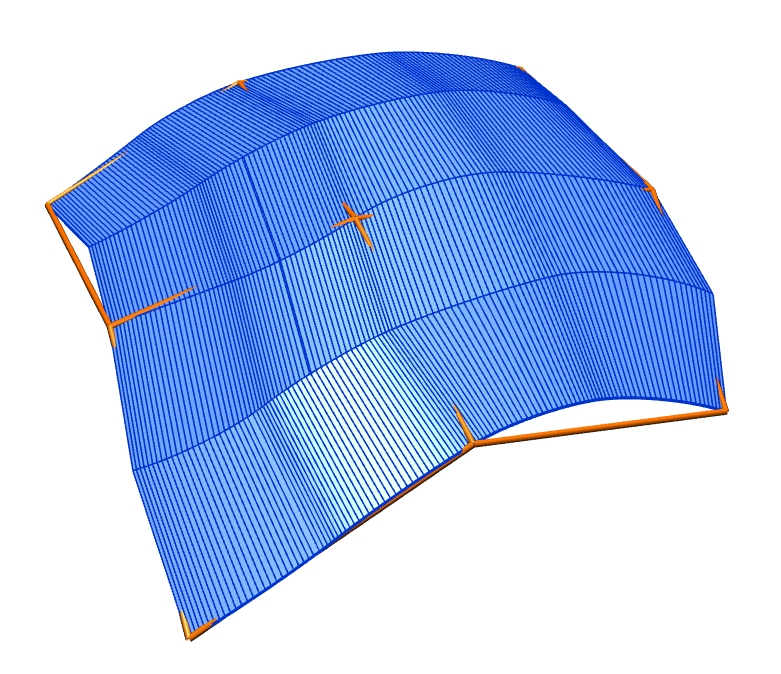}
  \caption{Choosing different vertex numbers for the two directions yields piecewise developable surface strips as a result of subdivision.}
  \label{fig:conical-subd}
\end{figure}
\end{example}

We leave further studies of the proposed structure-preserving subdivision schemes, which will include
shape control and multiresolution modeling techniques, for future research. 


\section{Multi line congruences}
\label{sec:isotropic_line_congruences}

In this section we follow the same approach for line congruences as for nets previously. The line congruences we are interested in are line congruences in quadrics, in particular, line congruences in the Lie and the Pl\"ucker quadrics. These isotropic line congruences represent principal contact element nets in Lie-sphere resp.\ Pl\"ucker line geometries. 

\begin{definition}
  \label{def:multi_line_congruence}
  A line congruence $l: \Z^2 \to \{\text{lines in $\RP^n$}\}$ is a \emph{multi-line congruence} if $\ell\ind{i_0,j_0}$ intersects $\ell\ind{i_1,j_0}$ and $\ell\ind{i_0,j_1}$ for all $i_0, i_1, j_0, j_1$.
\end{definition}

In other words, all lines associated with parameter lines intersect mutually. Hence, all lines $\ell_{i,j_0}$, $(i \in \Z)$ lie in a common plane, or pass through a common point. 
In Sections~\ref{sec:line-congruences-lie}, and \ref{sec:line-congruences-pluecker} we will use multi line congruences in the corresponding quadrics to characterize special principal contact element nets in Lie sphere and Pl\"ucker line geometries.

\subsection{Line congruences in the Lie quadric}
\label{sec:line-congruences-lie}

We will consider multi-line congruences in the Lie quadric~$\liequadric$ in the projective model of Lie sphere geometry in $\Proj(\R^{4,2})$. An isotropic line in the Lie quadric corresponds to a pencil of oriented spheres describing a contact element. Due to the signature of the Lie quadric of $(4,2)$ we are able to generate isotropic multi congruences in terms of points on the quadric, i.e., oriented spheres in $S^3$.

\begin{lemma}
  \label{lem:isotropic_lines_lie_quadric}
  Let $\ell:\Z^2 \to  \{\text{lines in } \liequadric\}$ be a generic isotropic multi line congruence. Then there exist points $s^1_i$ and $s^2_j$ in $\liequadric$ such that
  \[
  \ell\ind{i,j} = \lspan{s^1_i, s^2_j},
  \]
  for all $i,j \in \Z$. Furthermore, the subspaces $\lspan{s^1_i \,|\, i \in \Z}$ and $\lspan{s^2_j \,|\, j \in \Z}$ spanned by the two families of points are orthogonal.
\end{lemma}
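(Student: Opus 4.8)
The plan is to exploit the special geometry of the Lie quadric $\liequadric$, namely that it has signature $(4,2)$, so that its maximal totally isotropic subspaces have dimension $\min(4,2)=2$. An isotropic line in $\liequadric$ is a projective line all of whose points are null, and hence corresponds to a $2$-dimensional totally isotropic subspace of $\R^{4,2}$ — a maximal one. First I would unpack the multi-line congruence property of Definition~\ref{def:multi_line_congruence}: for each fixed $j$ the lines $\{\ell\ind{i,j}\mid i\in\Z\}$ of the first-direction strip pairwise intersect, and for each fixed $i$ the lines $\{\ell\ind{i,j}\mid j\in\Z\}$ of the second-direction strip pairwise intersect. As already noted after the definition, a family of pairwise intersecting distinct lines either all pass through one common point or all lie in one common plane.

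The key step is to rule out the coplanar alternative using the quadric structure. A projective plane $\Pi\subset\Proj(\R^{4,2})$ cannot be contained in $\liequadric$, since that would force a $3$-dimensional totally isotropic subspace of $\R^{4,2}$, contradicting the bound on the isotropic dimension. Hence $\Pi$ meets $\liequadric$ in a proper (possibly degenerate) plane conic, and such a conic contains at most two projective lines. Since generically each strip consists of at least three distinct lines, the coplanar case is impossible, so every strip must be concurrent. I would therefore define $s^2_j$ as the common point of the first-direction strip $\{\ell\ind{i,j}\}_i$ and $s^1_i$ as the common point of the second-direction strip $\{\ell\ind{i,j}\}_j$. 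Each such point lies on every line of its strip, and those lines are contained in $\liequadric$, so $s^1_i,s^2_j\in\liequadric$.

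It then remains to assemble the representation and prove orthogonality. The line $\ell\ind{i,j}$ belongs both to the first-direction strip with index $j$ and to the second-direction strip with index $i$, so it passes through both $s^2_j$ and $s^1_i$; since generically $s^1_i\neq s^2_j$, this yields $\ell\ind{i,j}=\lspan{s^1_i,s^2_j}$. Finally, because $\ell\ind{i,j}$ is an isotropic line, the $2$-plane spanned by $\hc s^1_i$ and $\hc s^2_j$ is totally isotropic, whence $\bilform{\hc s^1_i}{\hc s^2_j}=0$ for all $i,j$. By bilinearity this extends to $\bilform{u}{v}=0$ for all $u\in\lspan{s^1_i\mid i\in\Z}$ and $v\in\lspan{s^2_j\mid j\in\Z}$, i.e.\ the two spanning subspaces are orthogonal.

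The main obstacle is the concurrence step: converting the abstract dichotomy ``concurrent or coplanar'' into concurrence by invoking that at most two lines of the quadric can be coplanar, and carefully handling the genericity assumptions (at least three distinct lines per strip, and $s^1_i\neq s^2_j$) that the statement tacitly uses. Everything else — the identification of isotropic lines with maximal isotropic $2$-planes and the passage from total isotropy of $\lspan{\hc s^1_i,\hc s^2_j}$ to the orthogonality of the two families — is then routine bilinear algebra.
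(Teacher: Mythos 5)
Your proof is correct and follows essentially the same route as the paper's: both use the dichotomy that pairwise intersecting lines are concurrent or coplanar, rule out coplanarity because the Lie quadric contains no isotropic planes (equivalently, a plane section is a conic carrying at most two lines), and then deduce orthogonality of the two point families from the total isotropy of each line $\lspan{s^1_i, s^2_j}$. Your phrasing of the coplanarity exclusion via the at-most-two-lines-in-a-conic bound is a slightly more explicit version of the paper's ``the plane would have to be isotropic'' step, but the argument is the same.
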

\begin{proof}
  Consider three isotropic lines $\ell\ind{i_0,j_0}$, $\ell\ind{i_1,j_0}$, and $\ell\ind{i_2,j_0}$, associated with points on one parameter polygon. According to \Cref{def:multi_line_congruence}, the three lines intersect mutually and thus lie in a plane spanned by these lines or intersect in a common point. If $\ell\ind{i_0,j_0}$, $\ell\ind{i_1,j_0}$, and $\ell\ind{i_2,j_0}$ would lie in one plane, then this plane would have to be isotropic. But the Lie quadric does not contain isotropic planes. Hence the three lines intersect in one point denoted by $s^1_{i_0}$. In the generic case, the lines $\ell\ind{i_0,j}$ will span a subspace of dimension at least three, which implies that all the lines $\ell\ind{i_0,j}$ have to pass through the same point~$s^1_{i_0}$. 
  
  The same argument applies to the second direction, and hence we obtain the common point $s^2_{j_0}$ and the equation $\ell\ind{i,j} = \lspan{s^1_i, s^2_j}$ for all $i,j \in \Z$.

  Additionally, $s^1_{i_0}$ is the intersection of the isotropic lines $\ell\ind{i_0,j} = \lspan{s^1_{i_0}, s^2_j}$ and in particular $s^1_{i_0} \perp s^2_j$ for all $j \in \Z$.
\end{proof}
In terms of oriented spheres and contact elements, the above lemma states, that the contact elements $\ell\ind{i,j}$ are generated by the touching spheres  $s^1_i$ and $s^2_j$ for all $i,j \in \Z$.

Similar to the characterization of multi-Q-nets in the M\"obius quadric the characterization of isotropic multi congruences is based on the decomposition of a projective space into polar subspaces.

\begin{theorem}
  \label{thm:multi-congruence-lie}
  A generic multi line congruence in the Lie quadric consists of contact elements to a Dupin cyclide.
\end{theorem}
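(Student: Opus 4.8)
The plan is to leverage Lemma~\ref{lem:isotropic_lines_lie_quadric}, which already exhibits the congruence in factored form $\ell\ind{i,j} = \lspan{s^1_i, s^2_j}$ with both families of points lying on $\liequadric$ and spanning mutually orthogonal subspaces $U^1 = \lspan{s^1_i \mid i \in \Z}$ and $U^2 = \lspan{s^2_j \mid j \in \Z}$ of $\R^{4,2}$. As in the proof of Theorem~\ref{thm:circular}, I would run the classification purely through the signatures of these polar subspaces; the difference is that here the dimensions conspire to leave a single surface type.

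First I would fix the dimensions. Since $U^2 \subseteq (U^1)^\perp$ and the Lie form is non-degenerate, $\dim U^1 + \dim U^2 \le 6$; moreover each $U^a$ is spanned by isotropic vectors (the $s^a$ lie on $\liequadric$) and so cannot be definite, forcing $\dim U^a \ge 2$. In the generic case neither family degenerates and the configuration is balanced, which pins $\dim U^1 = \dim U^2 = 3$, so that $U^1$ and $U^2$ are orthogonal complements and the form restricts non-degenerately to each. The crux is then the signature step: among the non-degenerate $3$-dimensional signatures only $(2,1)$ and $(1,2)$ admit null vectors, and since the signatures of $U^1$ and $U^2$ must add up to $(4,2)$, the only admissible split is $(2,1)$ for both. (Degenerate splittings such as $(3,0) \oplus (1,2)$ are ruled out precisely because a definite summand contains none of the spheres of the congruence.)

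With the signatures fixed, each $\Proj(U^a) \cap \liequadric$ is a non-degenerate real conic, and the carrier planes $\Proj(U^1), \Proj(U^2)$ are mutually polar. Hence the congruence consists exactly of the isotropic lines joining a point of one conic to a point of the polar conic: the families $s^1_i$ and $s^2_j$ are the two families of curvature spheres, each sweeping out a conic section of the Lie quadric. This is precisely the Lie-geometric picture of a Dupin cyclide, whose principal curvature spheres are constant along the opposite curvature lines and trace two conics in polar planes, the contact element at $(u,v)$ being $\lspan{s^1(u), s^2(v)}$. To conclude cleanly I would invoke transitivity of the orthogonal group of $\R^{4,2}$ on pairs of complementary $(2,1)$-subspaces to normalize $U^1, U^2$ into standard position, recognize the resulting congruence as the contact element net of a standard cyclide (e.g.\ a torus of revolution, a degenerate Dupin cyclide being obtained when a subspace drops dimension), and appeal to the classical Lie-sphere characterization of Dupin cyclides (see, e.g., \cite{Blaschke:DiffGeoIII}). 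I expect the main obstacle to be the signature bookkeeping together with identifying the two-polar-conics configuration with a Dupin cyclide, rather than any explicit computation.
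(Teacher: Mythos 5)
Your proposal follows the paper's proof essentially verbatim in structure: invoke Lemma~\ref{lem:isotropic_lines_lie_quadric} to factor the congruence through two families of spheres spanning polar subspaces, then argue by signature bookkeeping that the generic case forces both subspaces to have signature $({+}{+}{-})$, which is the Lie-geometric characterization of the two conics of curvature spheres of a Dupin cyclide. The paper states this very tersely (``the only interesting case occurs if both $S^a \cap \liequadric$ contain more than two spheres''), and your more careful justification of the dimension and signature split is a faithful expansion of the same argument rather than a different route.
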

\begin{proof}
  By Lemma~\ref{lem:isotropic_lines_lie_quadric} the isotropic multi line congruence is generated by two orthogonal families of spheres $s^1_i, i \in \Z$ and $s^2_j, j \in \Z$ with $s^1_i \perp s^2_j$ for all $i,j \in \Z$.
  Let $S^1 = \lspan{ s^1_i}$ and $S^2 = \lspan{s^2_j}$. The only interesting case occurs, if both $S^1 \cap \liequadric$ and $S^2 \cap \liequadric$ contain more than two spheres. This happens if and only if the signature of the two polar subspaces is $({+}{+}{-})$ (all other cases can also easily be analyzed). But this characterizes exactly the contact elements of Dupin cyclides along curvature lines.
\end{proof}

\subsection{Line congruences in the Pl\"ucker quadric}
\label{sec:line-congruences-pluecker}

In this section we characterize multi line congruences in Pl\"ucker line geometry using the Pl\"ucker quadric~$\plueckerquadric$ in $\Proj(\R^{3,3})$. This case differs a little from the Lie geometric characterization since the Pl\"ucker quadric contains isotropic planes, namely $\alpha$- and $\beta$-planes.

\begin{lemma}
  \label{lem:isotropic_lines_pluecker_quadric}
  Let $\ell:\Z^2 \to \{\text{lines in } \plueckerquadric\}$ be a generic isotropic multi line congruence. Then there exist points $g^1_i$ and $g^2_j$ in $\plueckerquadric$ such that
  \[
  \ell\ind{i,j} = \lspan{g^1_i, g^2_j},
  \]
  for all $i,j \in \Z$. Furthermore, the subspaces $\lspan{g^1_i \,|\, i \in \Z}$ and $\lspan{g^2_j \,|\, j \in \Z}$ spanned by the two families of points are orthogonal.
\end{lemma}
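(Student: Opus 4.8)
The plan is to follow the proof of \Cref{lem:isotropic_lines_lie_quadric} as closely as possible and to isolate the single point where the Lie-geometric argument breaks down. First I would fix a parameter polygon, say the lines $\ell\ind{i,j_0}$ with $j_0$ held fixed and $i$ varying. By \Cref{def:multi_line_congruence} any two of these lines intersect. A family of pairwise intersecting lines in projective space is, by an elementary argument (pick two of them meeting at a point $P$ and spanning a plane $\pi$; a third line either passes through $P$ or, meeting both at distinct points, lies in $\pi$; compatibility of all pairs then forces one global alternative), either concurrent at a common point or contained in a common plane. So the whole $j_0$-polygon of lines either passes through a single point or lies in one plane, and the same dichotomy holds for the polygons in the other direction.

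The step that genuinely differs from the Lie case is the exclusion of the coplanar alternative. Three distinct coplanar lines of the family span a plane meeting $\plueckerquadric$ in at least three lines; since a plane not contained in a quadric meets it in a conic, and a plane conic contains at most two lines, the plane must lie inside $\plueckerquadric$, i.e.\ it is an $\alpha$- or a $\beta$-plane. This is exactly the configuration that could not arise for $\liequadric$, so here it must be ruled out by hand, and I expect this to be the main obstacle. Geometrically, a whole $\alpha$-plane's worth of lines corresponds to contact elements (pencils of lines in $\RP^3$) sharing a common vertex point, and a $\beta$-plane to contact elements sharing a common carrier plane, so the coplanar case is a genuine degeneration of one of the two maps of the congruence. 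More to the point, a coplanar family spans only the two-dimensional subspace $\pi$, which contradicts the genericity assumption that the lines of a parameter polygon span a subspace of dimension at least three (precisely the hypothesis already used in \Cref{lem:isotropic_lines_lie_quadric}). Hence, generically, each parameter polygon of lines is concurrent.

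It then remains to assemble the conclusion exactly as in the Lie case. The lines of the $j$-th polygon (fixed $j$, varying $i$) meet in a common point $g^2_j \in \plueckerquadric$, and applying the same reasoning in the other direction the lines with fixed $i$ meet in a point $g^1_i \in \plueckerquadric$. Since $\ell\ind{i,j}$ belongs to both polygons it passes through $g^1_i$ and $g^2_j$, and as these points are generically distinct we obtain $\ell\ind{i,j} = \lspan{g^1_i, g^2_j}$. Finally, because $\ell\ind{i,j}$ is an isotropic line it is totally isotropic, so the bilinear form vanishes on it and in particular $g^1_i \perp g^2_j$ for all $i,j$; this yields the orthogonality of the spans $\lspan{g^1_i \,|\, i \in \Z}$ and $\lspan{g^2_j \,|\, j \in \Z}$. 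The delicate point throughout is the clean dismissal of the $\alpha$/$\beta$-plane configuration, which is exactly where the presence of isotropic planes in $\plueckerquadric$ makes the Pl\"ucker case strictly harder than the Lie case.
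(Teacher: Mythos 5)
Your proposal is correct and follows essentially the same route as the paper: the paper's proof likewise just repeats the argument of \Cref{lem:isotropic_lines_lie_quadric} and then disposes of the new coplanar alternative by noting that a plane of pairwise-intersecting lines must be an isotropic ($\alpha$- or $\beta$-) plane of $\plueckerquadric$, corresponding to contact elements with a common point or a common carrier plane, a degenerate configuration excluded by genericity. Your identification of this as the single point of divergence from the Lie case, and your handling of it, match the paper's reasoning.
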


\begin{proof}
  The proof is exactly the same as for Lemma~\ref{lem:isotropic_lines_lie_quadric}, except that we have to discuss the cases of planar families of lines: The Pl\"ucker quadric contains two kinds of isotropic planes, i.e., contact elements with a common plane or contact elements with a common point. In these cases the entire net will degenerate to a planar net, or a point and are hence excluded.
\end{proof}

As we have excluded the cases introduced by isotropic planes, the characterization of multi congruences in the Pl\"ucker quadric is exactly analogous to the Lie quadric case.

\begin{theorem}
  \label{thm:multi-congruence-pluecker}
  A generic multi line congruence in the Pl\"ucker quadric consists of contact elements to a hyperboloid (doubly ruled quadric).
\end{theorem}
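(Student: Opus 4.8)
The plan is to mirror the proof of \Cref{thm:multi-congruence-lie} for the Lie quadric, transporting the argument to the signature $(3,3)$ of the Pl\"ucker quadric. By \Cref{lem:isotropic_lines_pluecker_quadric} the congruence is already presented as $\ell\ind{i,j} = \lspan{g^1_i, g^2_j}$ with $g^1_i \perp g^2_j$ for all $i,j$, and the two spanning families live in mutually polar subspaces $G^1 = \lspan{g^1_i}$ and $G^2 = \lspan{g^2_j}$ of $\R^{3,3}$. The isotropic-plane degeneracies (all contact elements through a common point, or in a common plane) have been excluded already in that lemma, so generically each family is a genuine one-parameter family of points on $\plueckerquadric$. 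I would first record that, under genericity, $G^1$ and $G^2$ are therefore both three-dimensional and, being polar in $\R^{3,3}$, satisfy $G^2 = (G^1)^\perp$ and $\dim G^1 + \dim G^2 = 6$.

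The key step is a signature analysis of the restriction of the Pl\"ucker form to $G^1$. A three-dimensional subspace meets the quadric in a real conic with infinitely many points exactly when its signature is indefinite, that is $({+}{+}{-})$; the definite case $({+}{+}{+})$ yields an empty real conic and is discarded. Since $G^2 = (G^1)^\perp$, the signature $({+}{+}{-})$ of $G^1$ forces the complementary signature $({+}{-}{-})$ on $G^2$ --- note the asymmetry compared with the Lie case, where $\R^{4,2}$ split into two copies of $({+}{+}{-})$ --- but both subspaces are indefinite and each cuts $\plueckerquadric$ in a nondegenerate real conic. Neither conic lies in an isotropic ($\alpha$- or $\beta$-) plane, because a totally isotropic $3$-space has vanishing signature whereas ours is $({+}{+}{-})$. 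Hence each conic is a bona fide \emph{regulus}: a one-parameter family of pairwise skew lines in $\RP^3$.

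It then remains to identify the two reguli geometrically. The lines $L^1_i$ and $L^2_j$ in $\RP^3$ corresponding to $g^1_i$ and $g^2_j$ pairwise \emph{meet}, since conjugacy $g^1_i \perp g^2_j$ of Pl\"ucker points is exactly the incidence condition for the associated lines. Two families of lines in which every member of one family meets every member of the other, and which in addition contain three pairwise skew lines each, are classically the two complementary rulings of a unique doubly ruled quadric (the regulus of transversals to three skew lines, together with its complementary regulus). The isotropic line $\ell\ind{i,j} = \lspan{g^1_i, g^2_j}$, being the pencil of lines through the point $L^1_i \cap L^2_j$ in the plane $\lspan{L^1_i, L^2_j}$, is then precisely the tangent contact element of this quadric at that point, which is the assertion of the theorem.

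The main obstacle I expect is not the linear-algebraic bookkeeping but the faithful dictionary between the abstract polar decomposition of $\R^{3,3}$ and concrete line geometry: verifying that a nondegenerate, non-isotropically situated conic on $\plueckerquadric$ is exactly a regulus, and that its complementary regulus is precisely the polar conic carried by $G^2$. Some care is also needed to confirm that all genuinely degenerate signatures (the definite case and the $\alpha$/$\beta$-plane situations) are excluded by genericity, so that the resulting doubly ruled quadric is nondegenerate (a hyperboloid of one sheet or a hyperbolic paraboloid) rather than a cone or a pair of planes.
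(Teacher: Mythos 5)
Your proposal is correct and follows exactly the route the paper intends: the paper gives no separate proof of \Cref{thm:multi-congruence-pluecker}, stating only that after \Cref{lem:isotropic_lines_pluecker_quadric} excludes the $\alpha$- and $\beta$-plane degeneracies the argument is ``exactly analogous'' to that of \Cref{thm:multi-congruence-lie}, and your signature analysis of the polar splitting of $\R^{3,3}$ into a $({+}{+}{-})$ and a $({+}{-}{-})$ subspace, each cutting the quadric in a regulus, is precisely that analogous argument written out. The only difference is that you supply the details (in particular the identification of the two polar conics as complementary reguli and of $\lspan{g^1_i,g^2_j}$ as the tangent contact element) which the paper leaves implicit.
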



\subsubsection*{Acknowledgements}
We would like to thank Wolfgang Schief and Jan Techter for many fruitful discussions.
This research was supported by the DFG Collaborative Research Center TRR 109, ``Discretization in Geometry and Dynamics.'' Helmut Pottmann was supported through Grant I 2978-N35 of the Austrian Science Fund (FWF).


\bibliographystyle{abbrv}
\bibliography{main}

\begin{thebibliography}{10}

\bibitem{Blaschke:DiffGeoIII}
W.~Blaschke.
\newblock {Vorlesungen \"uber Differentialgeometrie III}.
\newblock {X + 474 S. Berlin, J. Springer (Grundlehren der mathematischen
  Wissenschaften in Einzeldarstellungen) (1929).}, 1929.

\bibitem{BO20162}
P.~Bo, Y.~Liu, C.~Tu, C.~Zhang, and W.~Wang.
\newblock Surface fitting with cyclide splines.
\newblock {\em Computer Aided Geometric Design}, 43(Supplement C):2 -- 15,
  2016.
\newblock Geometric Modeling and Processing 2016.

\bibitem{Bo:2011:CAS}
P.~Bo, H.~Pottmann, M.~Kilian, W.~Wang, and J.~Wallner.
\newblock Circular arc structures.
\newblock {\em ACM Trans. Graph.}, 30(4):101:1--101:12, July 2011.

\bibitem{BobenkoHuhnenVenedey:Cyclides}
A.~I. {Bobenko} and E.~{Huhnen-Venedey}.
\newblock {C}urvature line parametrized surfaces and orthogonal coordinate
  systems: discretization with {D}upin cyclides.
\newblock {\em {Geom. Dedicata}}, 159:207--237, 2012.

\bibitem{BobHVRoer:Supercyclides}
A.~I. Bobenko, E.~Huhnen-Venedey, and T.~Rörig.
\newblock Supercyclidic nets.
\newblock {\em International Mathematics Research Notices}, 2017(2):323--371,
  2017.

\bibitem{BobenkoMatthesSuris:approximation}
A.~I. Bobenko, D.~Matthes, and Y.~B. Suris.
\newblock Discrete and smooth orthogonal systems: {$C^\infty$}-approximation.
\newblock {\em Int. Math. Res. Not.}, 2003(45):2415--2459, 2003.

\bibitem{BobenkoPottmannWallner:2010:CurvatureTheory}
A.~I. Bobenko, H.~Pottmann, and J.~Wallner.
\newblock A curvature theory for discrete surfaces based on mesh parallelity.
\newblock {\em Math. Annalen}, 348(1):1--24, 2010.

\bibitem{Bobenko:isothermic}
A.~I. Bobenko and Y.~B. Suris.
\newblock Isothermic surfaces in sphere geometries as {M}outard nets.
\newblock {\em Proceedings of the Royal Society of London A: Mathematical,
  Physical and Engineering Sciences}, 463(2088):3171--3193, 2007.

\bibitem{BobenkoSuris:DDG}
A.~I. {Bobenko} and Y.~B. {Suris}.
\newblock {\em {Discrete differential geometry. Integrable structure.}}
\newblock Providence, RI: American Mathematical Society (AMS), 2008.

\bibitem{Bol:ProjektiveDifferentialgeometrie}
G.~{Bol}.
\newblock {\em Projektive Differentialgeometrie I}, volume VII.
\newblock G\"ottingen: Vandenhoeck \& Ruprecht, 195.

\bibitem{Bouaziz:2012:ShapeUp}
S.~Bouaziz, M.~Deuss, Y.~Schwartzburg, T.~Weise, and M.~Pauly.
\newblock Shape-up: Shaping discrete geometry with projections.
\newblock {\em Comput. Graph. Forum}, 31(5):1657--1667, Aug. 2012.

\bibitem{Degen:1994:PlaneSilhouettes1}
W.~{Degen}.
\newblock {Nets with plane silhouettes.}
\newblock In {\em {Design and application of curves and surfaces. Mathematics
  of surfaces V. Based on the proceedings of the fifth conference on the
  mathematics of surfaces organized by the Institute of Mathematics and its
  Applications held in Edinburgh, UK, September 14-16, 1992}}, pages 117--133.
  Oxford: Clarendon Press, 1994.

\bibitem{Degen:1997:PlaneSilhouettes2}
W.~{Degen}.
\newblock {Nets with plane silhouettes. II.}
\newblock In {\em {The mathematics of surfaces. VII. Proceedings of the 7th
  conference, Dundee, Great Britain, September 1996}}, pages 263--279.
  Winchester: Information Geometers, Limited, 1997.

\bibitem{DoliwaSantini:1997:QnetsAreIntegrable}
A.~Doliwa and P.~M. Santini.
\newblock {Multidimensional quadrilateral lattices are integrable}.
\newblock {\em Phys. Lett. A}, 233:265--372, 1997.

\bibitem{Douthe:Isoradial}
C.~Douthe, R.~Mesnil, H.~Orts, and O.~Baverel.
\newblock Isoradial meshes: Covering elastic gridshells with planar facets.
\newblock {\em Automation in Construction}, 83(Supplement C):222 -- 236, 2017.

\bibitem{Dynetal-1990-butterfly}
N.~Dyn, D.~Levin, and J.~Gregory.
\newblock A butterfly subdivision scheme for surface interpolation with tension
  control.
\newblock {\em ACM Transactions on Graphics}, 9(2):160--169, 1990.

\bibitem{grohs-2008-subd}
P.~Grohs.
\newblock Smoothness analysis of subdivision schemes on regular grids by
  proximity.
\newblock {\em SIAM J. Numerical Analysis}, 46:2169--2182, 2008.

\bibitem{HertrichJeromin:MoebiusDiffGeo}
U.~{Hertrich-Jeromin}.
\newblock {\em {Introduction to M\"obius differential geometry.}}
\newblock Cambridge: Cambridge University Press, 2003.

\bibitem{HVRoer:HyperbolicNets}
E.~Huhnen-Venedey and T.~R{\"o}rig.
\newblock Discretization of asymptotic line parametrizations using hyperboloid
  surface patches.
\newblock {\em Geom. Dedicata}, 168(1):265--289, 2014.

\bibitem{Kobbelt-1996-intsubd}
L.~Kobbelt.
\newblock Interpolatory subdivision on open quadrilateral nets with arbitrary
  topology.
\newblock {\em Comput. Graph. Forum}, 15(3):409--420, 1996.

\bibitem{liu-2006-cm}
Y.~Liu, H.~Pottmann, J.~Wallner, Y.-L. Yang, and W.~Wang.
\newblock Geometric modeling with conical meshes and developable surfaces.
\newblock {\em ACM Trans. Graphics}, 25(3):681--689, 2006.
\newblock Proc. SIGGRAPH.

\bibitem{Mesnil:2016:Marionette}
R.~Mesnil, C.~Douthe, O.~Baverel, and B.~Léger.
\newblock Marionette mesh: from descriptive geometry to fabrication-aware
  design.
\newblock In S.~Adriaenssens et~al., editors, {\em Advances in Architectural
  Geometry 2016}, pages 62--81, Switzerland, 2016. Hochschulverlag Z{\"u}rich.

\bibitem{Sauer:1937:ProjLinienGeometrie}
R.~Sauer.
\newblock {\em {Projektive Liniengeometrie}}.
\newblock {Berlin, W. de Gruyter \& Co. (G\"oschens Lehrb\"ucherei I. Gruppe,
  Bd. 23)}, 1937.

\bibitem{tang-2014-ff}
C.~Tang, X.~Sun, A.~Gomes, J.~Wallner, and H.~Pottmann.
\newblock Form-finding with polyhedral meshes made simple.
\newblock {\em ACM Trans. Graphics}, 33(4), 2014.
\newblock {P}roc. SIGGRAPPH.

\bibitem{vaxmanetal-2018-moebiussubd}
A.~Vaxman, C.~M{\"u}ller, and O.~Weber.
\newblock M{\"o}bius invariant mesh subdivision, 2018.
\newblock private communication.

\bibitem{net:10.2312:CGF.v29i5pp1671-1679}
M.~Zadravec, A.~Schiftner, and J.~Wallner.
\newblock {Designing Quad-dominant Meshes with Planar Faces}.
\newblock {\em Computer Graphics Forum}, 2010.

\bibitem{zorinetal-1996-intsubd}
D.~Zorin, P.~Schr{\"o}der, and W.~Sweldens.
\newblock Interpolating subdivision for meshes with arbitrary topology.
\newblock In {\em Proc. SIGGRAPH '96}, pages 189--192, New York, 1996. ACM.

\end{thebibliography}

\end{document}